\documentclass[a4paper,12pt,reqno]{amsart}
\usepackage{amsfonts}
\usepackage{amsmath}
\usepackage{amssymb}
\usepackage[a4paper]{geometry}
\usepackage{mathrsfs}
\usepackage{csquotes}
\usepackage{enumitem}
\usepackage{dirtytalk}
\usepackage{comment}
\usepackage{nccmath}
\usepackage{pgfplots}
\pgfplotsset{compat=1.17}

\usepackage[colorlinks]{hyperref}
\usepackage[symbol]{footmisc}

\numberwithin{equation}{section}
\theoremstyle{plain}
\newtheorem{thm}{Theorem}[section]
\newtheorem{prop}[thm]{Proposition}
\newtheorem{cor}[thm]{Corollary}

\theoremstyle{definition}
\newtheorem{defn}[thm]{Definition}
\newtheorem{rem}[thm]{Remark}

\newtheorem*{conjecture}{Conjecture}

\newcommand\norm[1]{\left\lVert#1\right\rVert}

\def\e[#1]{{\textrm{e}}^{#1}}

\newcommand{\dist}{\operatorname{dist}}
\newcommand{\dd}{dx}
\allowdisplaybreaks[1]
\begin{document}

   \title[Sharp gradient stability of the $L^{p}$-Poincar\'e inequality]
 {Explicit sharp gradient stability of the $L^{p}$-Poincar\'e inequality and fundamental gaps for the Dirichlet $p$-Laplacian operator}

 \author[N. Yessirkegenov]{Nurgissa Yessirkegenov}
\address{
  Nurgissa Yessirkegenov:
  \endgraf
  KIMEP University, Almaty, Kazakhstan
     \endgraf
  {\it E-mail address} {\rm nurgissa.yessirkegenov@gmail.com}
  }

  \author[A. Zhangirbayev]{Amir Zhangirbayev}
\address{
  Amir Zhangirbayev:
 \endgraf
   SDU University, Kaskelen, Kazakhstan
  \endgraf
  and 
  \endgraf
  Institute of Mathematics and Mathematical Modeling, Kazakhstan
   \endgraf
  {\it E-mail address} {\rm
amir.zhangirbayev@gmail.com}
  }

\thanks{This research is funded by the Committee of Science of the Ministry of Science and Higher Education of the Republic of Kazakhstan (Grant No. AP32723635).}

     \keywords{Poincar\'e inequality, stability, Dirichlet $p$-Laplacian, fundamental gap}
     \subjclass[2020]{35P30, 35J92, 35P15, 46E35}

     \begin{abstract} 
     In this paper, we establish sharp gradient stability of the $L^{p}$-Poincar\'e inequality with explicit constants on bounded convex domains for any $1<p<\infty$. These estimates control
     the distance to the optimizers in the strongest possible norm with the optimal stability exponent $\max\{2,p\}$, exhibiting the same phenomenon initially observed by Figalli and Zhang
     \cite{figalli2022sharp} for the $L^p$-Sobolev inequality. In the case $1<p<2$, we also establish a sharp upper bound gradient stability estimate with an explicit constant and the optimal exponent $p$, matching the exponent obtained by Ignat and Zuazua \cite{ignat2025optimal}. By extending the lower stability estimate to general bounded domains with $C^2$ boundary, we are able to confirm the conjecture posed by Fleckinger-Pell\'e and Tak\'a\v{c} \cite[Section 8]{fleckinger2002improved}. Finally, we apply the stability inequality to obtain positive fundamental gap estimates in terms of the inradius for the Dirichlet $p$-Laplacian.
     \end{abstract}
     \maketitle

\section{Introduction}

\subsection{$L^{p}$-Poincar\'e inequality and Dirichlet $p$-Laplacian}

Let $1 < p < \infty$ and $\Omega \subset \mathbb{R}^n$ be an open bounded connected set with $n\geq 2$. The classical Poincar\'e inequality (see \cite{GilbargTrudinger1977} or \cite[Section 5.8.1]{evans2022partial}) states that there exists a constant $c>0$, depending only on $\Omega$ and $p$, such that
\begin{align}\label{Poincare}
\int_{\Omega}|\nabla u|^{p}dx \geq c\int_{\Omega}|u|^{p}dx
\end{align}
for all $u\in W_{0}^{1,p}(\Omega)$. 

The inequality (\ref{Poincare}) and its various generalizations serve as fundamental tools in the analysis of nonlinear partial differential equations (PDEs) and spectral theory. For example, in \cite{brezis1997blow}, Brezis and V\'azquez obtained an improved $L^{2}$-Poincar\'e inequality, which they used together with a semistability criterion to study singular extremal solutions of
\begin{align}\label{BV97 pde}
\begin{cases}
-\Delta u = \lambda f(u) & \text{in } \Omega, \\
u = 0 & \text{on } \partial\Omega,
\end{cases}
\end{align}
where $\Omega$ is a bounded smooth domain, $\lambda > 0$ and $f$ is
continuous, positive, increasing and convex on $[0,\infty)$ with
$f(s)/s \to \infty$ as $s \to \infty$.  Another application to PDEs appears in the work of Fleckinger-Pell\'e and Tak\'a\v{c} \cite{fleckinger2002improved},  where they also established an improved version of the inequality (\ref{Poincare}) and then applied it to study the existence of a weak solution to the degenerate quasilinear boundary value problem at resonance:
\begin{align}\label{takac pde}
  \begin{cases}
    -\Delta_p u = \lambda_1 |u|^{p-2} u + f(x) & \text{in } \Omega, \\
    u = 0 & \text{on } \partial\Omega.
  \end{cases}
\end{align}
These models arise naturally in the study of thermal explosions and spontaneous ignition \cite{gelfand1963some}, as well as in the modeling of compressible fluids within homogeneous, isotropic, rigid porous media (see \cite{drabek2007plaplacian}).

The best constant and the set of optimizers in the inequality (\ref{Poincare}) are directly related to the spectral information of the Dirichlet $p$-Laplacian operator:
\begin{align}\label{eq first eigenvalue}
\lambda_{1}(\Omega,p):=\min_{u\in W_{0}^{1,p}(\Omega)\setminus\{0\}}
\frac{\int_{\Omega}|\nabla u|^{p}dx}{\int_{\Omega}|u|^{p}dx}.
\end{align}
That is, the first eigenvalue $\lambda_{1}(\Omega,p)$ of $-\Delta_{p}$ is the best constant in the inequality (\ref{Poincare}) and the set of optimizers is $\mathbb{R}u_{1}$, where $u_{1}$ is a corresponding positive first eigenfunction. Equivalently, introducing
\begin{align*}
\mathcal{S}_{p}:=\left\{u\in W^{1,p}_{0}(\Omega):\norm{u}_{L^{p}(\Omega)}=1\right\},
\end{align*}
we can also write
\begin{align*}
\lambda_{1}(\Omega,p)=\min_{u\in\mathcal{S}_{p}}\int_{\Omega}|\nabla u|^{p}dx.
\end{align*}
The eigenvalue problem itself reads as follows:
\begin{align*}
\begin{cases} 
-\Delta_{p} u = \lambda|u|^{p-2} u & \text{in } \Omega, \\ 
u = 0 & \text{on } \partial \Omega,
\end{cases}
\end{align*}
where $\Delta_{p}(\cdot):=\operatorname{div}\left(|\nabla(\cdot)|^{p-2} \nabla(\cdot)\right)$ is the $p$-Laplacian operator. We say that $\lambda \in \mathbb{R}$ is an eigenvalue of $-\Delta_p$ and $u \in W_0^{1,p}(\Omega) \setminus \{0\}$ is an associated eigenfunction if
\begin{align}\label{eq weak eigenfunction}
\int_{\Omega} |\nabla u|^{p-2} \langle\nabla u,\nabla\varphi\rangle dx = \lambda \int_{\Omega} |u|^{p-2} u \varphi dx \quad \text{for any } \varphi \in W_0^{1,p}(\Omega).
\end{align}
The pair $(\lambda,u)$ is then called an eigenpair of the Dirichlet $p$-Laplacian.

When $p=2$, it is well-known that the whole spectrum of $-\Delta$ is discrete and consists of an unbounded sequence of positive real eigenvalues. In fact, one can use the Courant-Fischer minimax formula 
\begin{align*}
\lambda_k(\Omega,2):=\min_{\substack{X_k\subset W_{0}^{1,2}(\Omega)\\\dim X_k=k}}
\max_{u\in X_k\cap\mathcal{S}_{2}}\int_{\Omega}|\nabla u|^{2}dx,
\quad k\in\mathbb{N},
\end{align*}
to characterize the whole spectrum. However, when $p\neq 2$, the nonlinearity of the operator $-\Delta_{p}$ and the lack of the Hilbert space structure in $W_{0}^{1,p}(\Omega)$ make the analysis of the spectrum particularly difficult. The Ljusternik-Schnirelman theory \cite{anane1996second, struwe2008variational} is used instead to construct the variational sequence of eigenvalues. Specifically, one defines the Krasnoselskii genus, denoted by $\gamma(\mathcal{A})$, for any nonempty symmetric set $\mathcal{A}\subset W_{0}^{1,p}(\Omega)\setminus\{0\}$ as follows
\begin{align*}
\gamma(\mathcal{A}) = \inf\{m \in \mathbb{N} : \exists \text{ a continuous odd map } f : \mathcal{A} \to \mathbb{R}^m \setminus \{0\}\}
\end{align*}
with $\gamma(\mathcal{A}):=+\infty$ if no such $f$ exists. Let $\Sigma_{k}$ denote the family of all nonempty compact symmetric subsets $\mathcal{A}\subset\mathcal{S}_{p}$ with $\gamma(\mathcal{A})\geq k$, where compactness is understood in $W_{0}^{1,p}(\Omega)$. Then the sequence of Ljusternik-Schnirelman eigenvalues is defined by
\begin{align}\label{eq LS eigenvalues}
\lambda_k(\Omega,p):=\inf_{\mathcal{A}\in\Sigma_k}\max_{u\in\mathcal{A}}\int_{\Omega}|\nabla u|^{p}dx.
\end{align}
Although \eqref{eq LS eigenvalues} for $p=2$ recovers the whole spectrum of the standard Laplacian \cite{cuesta2000fucik}, the variational sequence does not exhaust the spectrum in general. The existence of non-Ljusternik-Schnirelman eigenvalues was a major open problem and was recently resolved by Bobkov for suitable planar rectangles and $p>2$ sufficiently close to two \cite{bobkov2026nonljusternik}. We refer to \cite{browder1970nonlinear, szulkin1988ljusternik, drabek1999resonance, fucik2006spectral} for several seminal works on this topic.

\subsection{The fundamental gap I}

While we still do not know the full description of the spectrum of the operator $-\Delta_{p}$, we do know that no eigenvalue exists between the first $\lambda_1(\Omega, p)$ and the second $\lambda_2(\Omega, p)$. The first eigenvalue $\lambda_{1}(\Omega,p)$ is simple, strictly positive and isolated (see \cite{anane1987simplicite,lindqvist1990equation}). Since there are no eigenvalues between these first two, this naturally leads to a question of geometric and physical significance: exactly how large is the gap between them? This difference is usually referred to as the fundamental gap and denoted by
\begin{align*}
\Gamma(\Omega, p) := \lambda_2(\Omega, p) - \lambda_1(\Omega, p) > 0.
\end{align*}
We use a mountain-pass characterization of the second eigenvalue for an open bounded connected domain $\Omega$ and $1<p<\infty$ by Cuesta, de Figueiredo and Gossez \cite{cuesta1999beginning}:
\begin{align}\label{eigenvalue second}
\lambda_{2}(\Omega,p)
=\inf_{\gamma\in\Psi(u_{1},-u_{1})}
\max_{t\in[0,1]}\int_{\Omega}|\nabla\gamma(t)|^{p}dx,
\end{align}
where 
\begin{align*}
\Psi(u_{1},-u_{1}):=\left\{\gamma\in C([0,1],\mathcal{S}_{p}):\gamma(0)=u_{1},\ \gamma(1)=-u_{1}\right\}
\end{align*}
and $\norm{u_{1}}_{L^{p}(\Omega)}=1$. We also refer to \cite{brasco2013hong} for an extension of (\ref{eigenvalue second}) to arbitrary open sets of finite measure.

In the linear case $p=2$, the problem of estimating the fundamental gap has a rich history. We do not attempt to survey the extensive literature and, instead, provide only some key developments. In 1983, van den Berg \cite{vandenberg1983condensation} observed that the bound
\begin{align*}
\Gamma(\Omega,2) \geq \frac{3\pi^{2}}{\mathrm{diam}(\Omega)^{2}}
\end{align*}
holds for many convex domains $\Omega$ and suggested that the bound holds for all such domains. Later, Yau \cite{yau1987nonlinear} and Ashbaugh with Benguria \cite{ashbaugh1989optimal}
independently conjectured that the constant $\frac{3\pi^{2}}{\mathrm{diam}(\Omega)^{2}}$ is actually sharp not only for the standard Dirichlet Laplacian $-\Delta$, but also for the Schr\"odinger operator $-\Delta+V$ in all dimensions $n\geq 1$, where $V$ is a convex potential on $\Omega\subset \mathbb{R}^{n}$. While Lavine \cite{lavine1994eigenvalue} proved this conjecture in one dimension, the higher-dimensional case was much harder to show. A major step toward its resolution was taken by Singer, Wong, Yau and Yau \cite{singer1985estimate}, who established the lower bound $\frac{\pi^{2}}{4\mathrm{diam}(\Omega)^{2}}$. This was later improved to $\frac{\pi^{2}}{\mathrm{diam}(\Omega)^{2}}$ by Yu and Zhong \cite{yu1986lower}. The conjecture was ultimately resolved by Andrews and Clutterbuck \cite{andrews2011proof} who demonstrated that the sharp lower bound is indeed $\frac{3\pi^{2}}{\mathrm{diam}(\Omega)^{2}}$. Their approach introduced several novel concepts such as a modulus of continuity to the heat equation and a modulus of concavity for $\log u_{1}$ (see \cite{brascamp1976extensions, andrews2009lipschitz, andrews2009time}). Shortly after, Ni \cite{ni2013estimates} provided an alternative proof by an elliptic argument and the improved log-concavity from \cite{andrews2011proof}. We refer interested readers to \cite{ashbaugh2006sharp, seto2019sharp} for comprehensive surveys on the subject and to \cite{oden1999spectral, he2020fundamental, dai2021fundamental, andrews2012eigenvalue, bourni2022vanishing, amato2025geometric, clutterbuck2025constant} for other crucial developments, including studies of the fundamental gap on spheres as well as in curved and hyperbolic spaces.

On the other hand, the nonlinear case $p\neq 2$ is much less understood and results in this setting are still under development. In \cite{sakaguchi1987concavity}, Sakaguchi showed that the positive first eigenfunction is log-concave on bounded convex domains in any dimension $n\geq 2$. But many of the other crucial tools used in the linear case, such as the modulus of concavity, do not extend directly when $p\neq2$ and $n\geq2$ (see \cite{chen2026two}). Because these techniques fail in higher dimensions, earlier work concentrated largely on one-dimensional settings. Cheng, Lian and Wang studied gap minimization for single-well potentials
\cite{cheng2014dual}. Andrews, Clutterbuck and Hauer obtained further interval comparisons under Robin boundary conditions,
including results for the $p$-Laplacian \cite{andrews2021fundamental}. We refer to \cite{chen2020remarks, wen2026maximal} for related results regarding the $p$-Laplacian in the one-dimensional case.

Before considering the $n\geq 2$ case, let us introduce the following notation
\begin{align*}
\mathcal{C}_{n}:=\left\{\Omega\subset\mathbb{R}^{n}:\Omega\text{ is an open bounded convex domain}\right\}.
\end{align*}
A recent contribution in higher dimensions is from \cite{yessirkegenov2026stability}.
There, the authors of this paper obtained the following Dirichlet
$p$-Laplacian fundamental gap estimate for $n\geq2$, $p\geq2$ and $\Omega\in\mathcal{C}_{n}$:
\begin{align}\label{yz26 intro}
  \Gamma(\Omega,p)
  \geq
  c_{\mathrm{deg}}(p)
  \left(\frac{\pi_p}{\mathrm{diam}(\Omega)}\right)^p
  \inf_{c\in\mathbb R}\int_{\Omega}|u_{2}-cu_{1}|^{p}dx.
\end{align}
Here, the eigenfunctions are normalized and the constants are
\begin{align}
c_{\mathrm{deg}}(p)
&:=\begin{cases}
1, & p=2,\\
(p-1)(r_0+1)^{2-p}, & p>2,
\end{cases}\label{eq degenerate constant}\\
\pi_p
&=2\int_0^\infty\frac{ds}{1+s^p/(p-1)}
=\frac{2\pi(p-1)^{1/p}}{p\sin(\pi/p)},\notag
\end{align}
where $r_0$ is a positive solution to the equation $r^{p-1}-(p-1) r-(p-2)=0$. Although \eqref{yz26 intro} still contains an eigenfunction-dependent
factor, it provided the first quantitative estimate in higher dimensions $n\geq 2$. In addition, the result actually came from the problem of stability. In short, one can see why from the spectral decomposition of the Dirichlet Laplacian in $L^{2}(\Omega)$:
\begin{align}\label{spectral decom}
\int_\Omega|\nabla u|^2dx-\lambda_1(\Omega,2)\int_\Omega|u|^2dx\geq(\lambda_2(\Omega,2)-\lambda_1(\Omega,2))
\inf_{c\in\mathbb R}\int_\Omega|u-cu_1|^2dx
\end{align}
for every $u\in W_0^{1,2}(\Omega)$.
The constant on the right-hand side of (\ref{spectral decom}) is sometimes called the \textit{stability constant}.

\subsection{Stability}

Historically, the problem of deriving stability estimates of inequalities was first posed by Brezis and Lieb \cite{brezis1985sobolev} for the $L^{2}$-Sobolev inequality. Then, Bianchi and Egnell resolved the question in \cite{bianchi1991note}. Since then, the study of stability estimates for functional and geometric inequalities has attracted considerable attention with landmark contributions including \cite{fusco2008sharp, carlen2013stability, brasco2015faber, carlen2017duality, figalli2022sharp, figalli2023sharp, Koenig2023, Koenig2025, dolbeault2025sharp}. We also refer to \cite{figalli2011shape, carlen2013stability, dolbeault2013improved, cicalese2013droplet, figalli2013isoperimetric, figalli2022strong} for various applications of stability.

A major stability result closely related to our work is due
to Figalli and Zhang \cite{figalli2022sharp}, who established
sharp gradient stability for the $L^p$-Sobolev inequality
for every $1<p<n$.
Let $S$ denote the optimal constant in the Sobolev inequality
$\|\nabla u\|_{L^p(\mathbb{R}^n)}
\geq S\|u\|_{L^{p^*}(\mathbb{R}^n)}$
and let $\mathcal{M}$ be the family of all nonzero
Sobolev optimizers.
Then, for every nonzero $u\in\dot W^{1,p}(\mathbb{R}^n)$,
we have
\begin{align}
\frac{\norm{\nabla u}_{L^{p}(\mathbb{R}^{n})}}{\norm{u}_{L^{p^{*}}(\mathbb{R}^{n})}}-S\geq c\inf_{v\in\mathcal{M}}
\left(\frac{\norm{\nabla(u-v)}_{L^{p}(\mathbb{R}^{n})}}{\norm{\nabla u}_{L^{p}(\mathbb{R}^{n})}}\right)^{\max\{2,p\}},
\label{eq figalli zhang stability}
\end{align}
where $p^{*}:=\frac{np}{n-p}$ and $c>0$ depends only on $n$ and $p$.
The exponent $\max\{2,p\}$ in (\ref{eq figalli zhang stability}) is not only optimal, but also seems to be a recurring feature of quantitative stability in $L^{p}$-based spaces. For instance, this phenomenon was observed to hold also for related inequalities such as Hardy-Sobolev
\cite{deng2025stability}, Hardy-Sobolev-Maz'ya \cite{dai2025sharp} and
Caffarelli-Kohn-Nirenberg
\cite{deng2025caffarelli,deng2024gradient,zhou2026stability}. Furthermore, the same exponent appears in substantially different analytic and geometric settings such as affine Sobolev inequalities \cite{fan2026sharp,fan2026sharp2}, spinorial Sobolev
inequalities \cite{wang2026stability}, harmonic extension inequalities
\cite{frank2025sharp} and curvature inequalities \cite{frank2024sharp, peteranderl2026sharp}.

A local upper bound for the Sobolev deficit was obtained by Ignat and Zuazua \cite[Corollary~3.5 and Remark~3.6]{ignat2025optimal}: there exist constants $C>0$ and $\varepsilon_{0}>0$, depending only on $n$ and $p$, such that, for every nonzero $u\in\dot W^{1,p}(\mathbb{R}^{n})$ satisfying
\begin{align*}
\inf_{v\in\mathcal{M}}
\frac{\norm{\nabla(u-v)}_{L^{p}(\mathbb{R}^{n})}}{\norm{\nabla u}_{L^{p}(\mathbb{R}^{n})}}
<\varepsilon_{0},
\end{align*}
we have
\begin{align}\label{eq ignat zuazua upper bound}
\frac{\norm{\nabla u}_{L^{p}(\mathbb{R}^{n})}}{\norm{u}_{L^{p^{*}}(\mathbb{R}^{n})}}-S
\leq C\inf_{v\in\mathcal{M}}
\left(\frac{\norm{\nabla(u-v)}_{L^{p}(\mathbb{R}^{n})}}{\norm{\nabla u}_{L^{p}(\mathbb{R}^{n})}}\right)^{\min\{2,p\}}.
\end{align}
The exponent $\min\{2,p\}$ is optimal. In particular, in the singular range $1<p<2$, this upper bound has exponent $p$, whereas the lower bound in (\ref{eq figalli zhang stability}) has exponent $2$.

Once stability is established, the next question is whether its constant can be made explicit. Non-constructive methods, such as concentration-compactness, generally prove the existence of a positive constant without providing a computable value. This has motivated several constructive approaches \cite{dolbeault2021functional, bonforte2023constructive, chen2024stability,brigati2024logarithmic, brigati2024stability,chen2025optimal,dolbeault2025sharp,chen2026optimal,chen2025asymptotically, bonforte2025stability}. In \cite{yessirkegenov2026stability}, the authors build on the exact-remainder and weighted Poincar\'e methods used in \cite{do2023lp,cazacu2024caffarelli,do2024scale,shaimerdenov2025sharp,do2026sharp} to obtain the following stability estimate with an explicit constant: let $p,n\geq 2$ and $\Omega\in\mathcal{C}_{n}$. Then, for every $u\in C_{0}^{\infty}(\Omega)$, we have
\begin{align}\label{eq yz26 stability}
\int_{\Omega}|\nabla u|^{p}dx-\lambda_{1}(\Omega,p)\int_{\Omega}|u|^{p}dx\geq c_{\mathrm{deg}}(p)\left(\frac{\pi_{p}}{\mathrm{diam}(\Omega)}\right)^{p}
\inf_{c\in\mathbb{R}}\int_{\Omega}|u-cu_{1}|^{p}dx.
\end{align}
Extending (\ref{eq yz26 stability}) by density and applying it to an $L^{p}$-normalized second eigenfunction, they obtained the fundamental gap estimate (\ref{yz26 intro}). 

Earlier, Fleckinger-Pell\'e and Tak\'a\v{c}
\cite{fleckinger2002improved} established the following stability estimate
for the $L^p$-Poincar\'e inequality \eqref{Poincare}:
let $p>2$ and let $\Omega\subset\mathbb R^n$ be a bounded connected
domain with connected $C^2$ boundary. Then there exists a constant
$c>0$, depending on $\Omega$ and $p$, such that, for every $u\in W_0^{1,p}(\Omega)$,
\begin{multline}\label{eq FT degenerate}
\int_\Omega \left|\nabla u\right|^{p}dx-\lambda_1\int_\Omega \left|u\right|^{p}dx
\geq c\left(
|u^\parallel|^{p-2}
\int_\Omega|\nabla u_1|^{p-2}|\nabla u^\perp|^2\dd
\right.\\\left.
+\int_\Omega \left|\nabla u^\perp\right|^{p}dx\right),
\end{multline}
where the decomposition is given by \begin{align}\label{eq projection}
u^\parallel=\frac{\int_\Omega uu_1dx}{\int_\Omega u_1^2dx},
\qquad u^\perp=u-u^\parallel u_1.
\end{align}
In the singular case $1<p<2$, Fleckinger-Pell\'e and Tak\'a\v{c}
\cite[Section~8, p.~969]{fleckinger2002improved} posed the following conjecture.
\begin{conjecture}
Let $1<p<2$ and $\Omega\subset\mathbb{R}^n$ be a bounded domain with $C^{2}$ boundary. Then, there exists a constant $c>0$, depending on $\Omega$ and $p$, such that for all $u \in W_0^{1,p}(\Omega)$, we have
\begin{align}\label{eq:8.1}
\int_\Omega |\nabla u|^p \,\mathrm{d}x - \lambda_1 \int_\Omega |u|^p \,\mathrm{d}x \ge c \left(\max\left\{|u^{\parallel}|,\norm{\nabla u^{\perp}}_{L^{p}(\Omega)}\right\}\right)^{-(2-p)}
\norm{\nabla u^{\perp}}_{L^{p}(\Omega)}^{2}.
\end{align}
Equivalently, with some other constant $c' > 0$, we have
\begin{align*}
\int_\Omega |\nabla u|^p \,\mathrm{d}x - \lambda_1 \int_\Omega |u|^p \,\mathrm{d}x
\ge c' \norm{\nabla u}_{L^{p}(\Omega)}^{-(2-p)}
\norm{\nabla u^{\perp}}_{L^{p}(\Omega)}^{2}.
\end{align*}
\end{conjecture}
Related analysis of the singular energy near the set of optimizers and of the Palais-Smale condition was developed by Dr\'abek and Tak\'a\v{c} \cite{drabek2007poincare}. We also note the work of Bobkov and Kolonitskii \cite{bobkov2023improved} where they provided qualitative improvements of the nonlinear Friedrichs inequality in the form of (\ref{eq FT degenerate}). 

\subsection{The fundamental gap II}

Most recently, Chen and Hauer \cite{chen2026fundamental} extended the results of \cite{yessirkegenov2026stability} to convex potentials and removed the eigenfunction-dependent factor in (\ref{yz26 intro}): let $n\geq2$, $p\geq2$ and $\Omega\in\mathcal{C}_{n}$. Then, we have
\begin{align}\label{eq diameter gap}
\Gamma(\Omega,p)
  \geq  c_{\mathrm{deg}}(p)
  \left(\frac{\pi_p}{\mathrm{diam}(\Omega)}\right)^p.   
\end{align}
Here, we state the zero-potential case for convenience. In the case of $n=1$ and $1<p<\infty$, they resolved the question of the sharp fundamental gap completely \cite[Theorem~1.1]{chen2026fundamental}. In dimensions $n\geq 2$, they showed that
\begin{align*}
\inf_{\Omega\in\mathcal{C}_{n}}
\mathrm{diam}(\Omega)^p\Gamma(\Omega,p)
=
\begin{cases}
0, & 1<p<2,\\
3\pi^2, & p=2.
\end{cases}
\end{align*}
When $p>2$, they proved that the infimum is positive and attained
by some $\Omega\in\mathcal{C}_{n}$. More precisely, their lower bound for
the gap in terms of the first eigenvalue shows that, at fixed
diameter, the gap diverges as the inradius tends to zero. Therefore, the minimizing sequences cannot collapse. Although the minimizing
domain and the exact value of the infimum remain unknown, the optimal diameter-normalized gap is
comparable to $n^{p-2}$. Unlike
the case $p=2$, the optimal constant depends on the
dimension \cite[Theorem 1.5]{chen2026fundamental}.

\subsection{Main contributions}

In this paper, we prove the gradient stability estimate for the $L^{p}$-Poincar\'e inequality with explicit constants over the full range $1<p<\infty$: let $n\geq2$, $\Omega\in\mathcal{C}_{n}$ and $\rho>p$ be the unique solution of
\begin{align*}
\rho^{p-1}(\rho-p)-\lambda_{1}(\Omega,p)
\left(\frac{\mathrm{diam}(\Omega)}{\pi_{p}}\right)^{p}=0.
\end{align*}
Then, for every $0\neq u\in W_{0}^{1,p}(\Omega)$, we have
\begin{align}\label{eq gradient stability intro}
1-\lambda_{1}(\Omega,p)\frac{\norm{u}_{L^{p}(\Omega)}^{p}}{\norm{\nabla u}_{L^{p}(\Omega)}^{p}}
\geq C_{\Omega,p}
\inf_{c\in\mathbb{R}}\left(\frac{\norm{\nabla(u-cu_{1})}_{L^{p}(\Omega)}}{\norm{\nabla u}_{L^{p}(\Omega)}}\right)^{\alpha},
\end{align}
where $\alpha:=\max\{2,p\}$ and
\begin{align}\label{eq stability constant intro}
C_{\Omega,p}:=\frac{1}{(1+\rho)^{\max\{2,p\}}}\times
\begin{cases}
c_{\mathrm{deg}}(p),&p\geq2,\\[1ex]
\underline{c}_{\mathrm{sing}}(p)\left(\dfrac{p-1}{2p-1}\right)^{2-p},&1<p<2.
\end{cases}
\end{align}
Moreover, $\alpha$ is optimal.
For $1<p<2$, the constant $\underline{c}_{\mathrm{sing}}(p)$ is defined by
\begin{align*}
\underline{c}_{\mathrm{sing}}(p)
&:=\inf_{s^{2}+t^{2}>0}
\frac{\left(t^{2}+s^{2}+2s+1\right)^{p/2}-1-ps}
{\left(\sqrt{t^{2}+s^{2}+2s+1}+1\right)^{p-2}(s^{2}+t^{2})}
\in\left(0,\frac{p(p-1)}{2^{p-1}}\right].
\end{align*}
We also prove that
\begin{align}\label{eq rho upper bound intro}
\rho<p+\lambda_{1}(\Omega,p)^{1/p}\frac{\mathrm{diam}(\Omega)}{\pi_{p}}.
\end{align}
Substituting (\ref{eq rho upper bound intro}) into (\ref{eq stability constant intro}) gives the explicit lower bounds for $C_{\Omega,p}$ stated in Remark \ref{rem explicit stability}.

The estimate (\ref{eq gradient stability intro}) controls the distance to the optimizers in the full gradient norm of $W_{0}^{1,p}(\Omega)$. However, in our proof, the stronger norm comes at the cost of a weaker stability constant. Its stability exponent is $2$ for $1<p\leq2$ and $p$ for $p>2$, exhibiting the same transition as in the sharp gradient stability of the Sobolev inequality proved by Figalli and Zhang \cite{figalli2022sharp}. In the singular case, (\ref{eq gradient stability intro}) confirms the conjecture of Fleckinger-Pell\'e and Tak\'a\v{c} (\ref{eq:8.1}) for $\Omega\in \mathcal{C}_n$ (see Theorem~\ref{thm takac conjecture general} for general bounded domains with $C^{2}$ boundary). To our knowledge, this is the first gradient stability estimate for an $L^{p}$-functional inequality that combines the optimal stability exponent with an explicit stability constant throughout the full range $1<p<\infty$. 

In the singular range $1<p<2$, we also prove a sharp upper bound gradient stability: let $\Omega\in\mathcal{C}_{n}$, $n\geq 2$ and the normalized Poincar\'e deficit be defined by
\begin{align*}
\delta_{\Omega,p}(u):=1-\lambda_{1}(\Omega,p)
\frac{\norm{u}_{L^{p}(\Omega)}^{p}}{\norm{\nabla u}_{L^{p}(\Omega)}^{p}}.
\end{align*}
Then, for every $0\neq u\in W_{0}^{1,p}(\Omega)$, we have
\begin{multline}\label{eq two sided gradient stability intro}
\frac{\underline{c}_{\mathrm{sing}}(p)}{(1+\rho)^{\max\{2,p\}}}\left(\dfrac{p-1}{2p-1}\right)^{2-p}\inf_{c\in\mathbb{R}}
\left(\frac{\norm{\nabla(u-cu_{1})}_{L^{p}(\Omega)}}{\norm{\nabla u}_{L^{p}(\Omega)}}\right)^{2}
\leq\delta_{\Omega,p}(u)\\
\leq\overline{c}_{\mathrm{sing}}(p)
\left(\frac{2p-1}{p-1}\right)^{p}
\inf_{c\in\mathbb{R}}
\left(\frac{\norm{\nabla(u-cu_{1})}_{L^{p}(\Omega)}}{\norm{\nabla u}_{L^{p}(\Omega)}}\right)^{p},
\end{multline}
where $\overline{c}_{\mathrm{sing}}(p)$ is defined in (\ref{eq singular upper constant}). Both the lower exponent $2$ and the upper exponent $p$ are optimal.

As one can see, the same optimal exponent as in the Sobolev upper bound of Ignat and Zuazua (\ref{eq ignat zuazua upper bound}) occurs for the Poincar\'e inequality as well. Additionally, \eqref{eq two sided gradient stability intro} holds globally for every nonzero $u\in W_{0}^{1,p}(\Omega)$ without a smallness assumption on its distance to the optimizers. Meaning, it applies to every term of a minimizing sequence,
including any initial terms far from the optimizer set. The lower constant has the explicit geometric dependence given above, while the upper constant depends only on $p$. Ignat and Zuazua state their upper estimate with an unspecified constant $C$ and a sufficiently small threshold for the normalized gradient distance. We also note that convexity is needed for our lower bound, whereas our upper bound holds on arbitrary open bounded domains.

We then apply (\ref{eq gradient stability intro}) to the fundamental gap. Deriving a gradient version of the path-intersection argument in \cite[Lemma~6.3]{chen2026fundamental} and following the mountain-pass argument in \cite[proof of Theorem~1.7~(i)]{chen2026fundamental}, we get
\begin{align}\label{eq fundamental gap intro}
\Gamma(\Omega,p)\geq\frac{C_{\Omega,p}}{1-C_{\Omega,p}}\lambda_{1}(\Omega,p).
\end{align}
Equivalently, (\ref{eq fundamental gap intro}) reads $\Gamma(\Omega,p)\geq C_{\Omega,p}\lambda_{2}(\Omega,p)$. Thus the gradient stability estimate yields a fundamental gap bound in terms of the first eigenvalue for every $1<p<\infty$.

Finally, employing both sides of the inradius bounds of Brasco \cite[Theorem~1.2 and Remark~1.4]{brasco2018principal}, we get the following geometric estimate
\begin{align}\label{eq inradius gap intro}
\Gamma(\Omega,p)\geq K_{n,p}
\frac{r_{\Omega}^{\max\{2,p\}-p}}{\mathrm{diam}(\Omega)^{\max\{2,p\}}},
\end{align}
where $r_{\Omega}:=\sup_{x\in\Omega}\dist(x,\partial\Omega)$ is the inradius and
\begin{align*}
K_{n,p}:={}\frac{\pi_{p}^{p}}{
2^{p}\left(\frac{p+1}{2}+\frac{\lambda_{1}(B_{1}(0),p)^{1/p}}{\pi_{p}}\right)^{\max\{2,p\}}}\times\begin{cases}
c_{\mathrm{deg}}(p),&p\geq2,\\[1ex]
\underline{c}_{\mathrm{sing}}(p)\left(\dfrac{p-1}{2p-1}\right)^{2-p},&1<p<2.
\end{cases}
\end{align*}
Here $B_{1}(0)$ denotes the unit ball in $\mathbb{R}^{n}$.

For $p\geq2$, (\ref{eq inradius gap intro}) is a diameter bound with a smaller constant than the estimate of Chen and Hauer in (\ref{eq diameter gap}). For $1<p<2$, Chen and Hauer \cite[Theorems~1.2 and~1.3]{chen2026fundamental} constructed smooth domains in $\mathcal{C}_{n}$ collapsing to a segment whose fundamental gaps tend to zero. Hence no positive lower bound depending only on the diameter can hold in this range. The singular case of (\ref{eq inradius gap intro}) complements their result by providing an explicit positive bound involving both the inradius and the diameter. The factor $r_{\Omega}^{2-p}$ allows the lower bound to tend to zero as the inradius vanishes at fixed diameter.

\subsection{Strategy of the proof}

The proof relies on a simple observation: setting
$f:=u/u_1-c$, we have
\begin{align}\label{eq strategy gradient decomposition}
u-cu_1=u_1f,
\qquad
\nabla(u-cu_1)=u_1\nabla f+f\nabla u_1.
\end{align}
For $p\geq2$ (Theorem \ref{thm yz26}), we have
\begin{align}\label{eq strategy degenerate remainder}
\int_\Omega|\nabla u|^pdx
-\lambda_1(\Omega,p)\int_\Omega|u|^pdx
\geq c_{\mathrm{deg}}(p)
\int_\Omega|u_1\nabla f|^pdx.
\end{align}
For $1<p<2$ (Theorem \ref{thm poinc 1p2}), we have
\begin{align}
&\int_\Omega|\nabla u|^pdx
-\lambda_1(\Omega,p)\int_\Omega|u|^pdx\notag\\
&\qquad\geq\underline{c}_{\mathrm{sing}}(p)
\int_\Omega
\left(|\nabla u|
+\left|\frac{u}{u_1}\nabla u_1\right|\right)^{p-2}
|u_1\nabla f|^2dx.
\label{eq strategy singular remainder}
\end{align}
Both inequalities follow from \cite{apseit2025sharp} by the usual argument of integration by parts and the
first eigenfunction equation with bounds for the convexity remainder $C_p$.

As one can see, these inequalities already control the first term
$u_1\nabla f$ in (\ref{eq strategy gradient decomposition}).
We would now like to control the second term $f\nabla u_1$
as well. More precisely, we look for
an estimate of the form
\begin{align}\label{eq strategy ground state gradient bound}
\norm{f\nabla u_1}_{L^p(\Omega)}
\leq\rho\norm{u_1\nabla f}_{L^p(\Omega)}
\end{align}
with a constant $\rho>0$ independent of $u$.

The first eigenfunction equation provides a way to compare
these terms. Indeed, testing the weak formulation
(\ref{eq weak eigenfunction}) for $\varphi=u_1|f|^p$
and applying H\"older's inequality gives
\begin{align}
\norm{f\nabla u_1}_{L^p(\Omega)}^p
&\leq\lambda_1(\Omega,p)
\norm{u_1f}_{L^p(\Omega)}^p\notag\\
&\quad+p\norm{f\nabla u_1}_{L^p(\Omega)}^{p-1}
\norm{u_1\nabla f}_{L^p(\Omega)}.
\label{eq strategy testing bound}
\end{align}
The right-hand side of (\ref{eq strategy testing bound})
still contains $\norm{u_1f}_{L^p(\Omega)}$.
However, thanks to the weighted
Poincar\'e inequality, this term can be
controlled by $\norm{u_1\nabla f}_{L^p(\Omega)}$, which
is exactly what we need to get
(\ref{eq strategy ground state gradient bound}).
Here, $c$ is chosen to minimize
\begin{align*}
\int_\Omega|u-cu_1|^pdx,
\end{align*}
ensuring the weighted mean condition. Therefore, in (\ref{eq strategy testing bound}), we have
\begin{align*}
\norm{f\nabla u_1}_{L^p(\Omega)}^p
&\leq\lambda_1(\Omega,p)
\left(\frac{\mathrm{diam}(\Omega)}{\pi_p}\right)^p
\norm{u_1\nabla f}_{L^p(\Omega)}^p\\
&\quad+p\norm{f\nabla u_1}_{L^p(\Omega)}^{p-1}
\norm{u_1\nabla f}_{L^p(\Omega)}.
\end{align*}
This inequality now involves only
$\norm{u_1\nabla f}_{L^p(\Omega)}$ and
$\norm{f\nabla u_1}_{L^p(\Omega)}$. If the first quantity vanishes, so does the second.
Otherwise, dividing by
$\norm{u_1\nabla f}_{L^p(\Omega)}^p$
shows that their ratio is bounded by a constant depending
only on $\Omega$ and $p$, which means that (\ref{eq strategy ground state gradient bound}) does hold for some
$\rho>0$ independent of $u$. Applying the triangle inequality in 
(\ref{eq strategy gradient decomposition}) and (\ref{eq strategy ground state gradient bound}) finally gives
\begin{align}\label{eq strategy gradient conversion}
\norm{\nabla(u-cu_1)}_{L^p(\Omega)}
\leq(1+\rho)\norm{u_1\nabla f}_{L^p(\Omega)}.
\end{align}
The estimate (\ref{eq strategy gradient conversion}) then transfers the control provided by the remainder inequalities, (\ref{eq strategy degenerate remainder}) and
(\ref{eq strategy singular remainder}),
to the full gradient distance to the optimizers.

\subsection{Structure of the paper}

The article is organized as follows. In Section
\ref{sec preliminaries}, we introduce the $C_p$-functional and the remainder estimates for the Poincar\'e inequality in the degenerate and singular ranges. We also recall the weighted Poincar\'e inequality for log-concave weights and the bounds for the first Dirichlet $p$-Laplacian eigenvalue in terms of the inradius. Section \ref{sec main results} contains our gradient stability estimates and explicit bounds for the constants. We then state the resolution of the conjecture of Fleckinger-Pell\'e and Tak\'a\v{c}, including the extension to bounded domains with $C^2$ boundary. Lastly, we present the resulting fundamental gap estimates in terms of the first eigenvalue, the inradius and the diameter. Finally, Section \ref{sec proofs} contains the proofs of all results in this paper. 

\section{Preliminary results}\label{sec preliminaries}

In this section, we recall the auxiliary results used in the proofs. We begin with the definition of the $C_{p}$-functional and remainder estimates for the Poincar\'e inequality, followed by the weighted Poincar\'e inequality. Lastly, we recall the eigenvalue bounds by Brasco \cite{brasco2018principal} in terms of the inradius that enter the fundamental gap estimate.

We start with the definition of the $C_{p}$-functional, which will be used during the proof of the optimality of the exponent.

\begin{defn}
Let $1<p<\infty$. Then, for $\xi,\eta\in\mathbb{C}^{n}$, we define
\begin{align}\label{cp formula}
C_p(\xi,\eta):=|\xi|^p-|\xi-\eta|^p-p|\xi-\eta|^{p-2}\textnormal{Re}\left\langle\xi-\eta,\eta\right\rangle\geq0.
\end{align}
\end{defn}

The gradient stability argument begins with an improved Poincar\'e inequality. Such an inequality measures the variation of $u/u_{1}$, which vanishes precisely when $u$ is a multiple of $u_{1}$. In the degenerate case, the remainder controls this variation through its weighted $L^{p}$ norm.

\begin{thm}[{\cite{yessirkegenov2026stability, apseit2025sharp}}]\label{thm yz26}
Let $p,n\geq 2$ and $\Omega\subset\mathbb{R}^n$ be an open bounded domain. Then, for every $u\in W_{0}^{1,p}(\Omega)$, we have
\begin{align}\label{stability}
&\int_{\Omega}|\nabla u|^{p}dx-\lambda_{1}(\Omega,p)\int_{\Omega}|u|^{p}dx\geq c_{\mathrm{deg}}(p)\int_{\Omega}
\left|u_{1}\nabla\left(\frac{u}{u_{1}}\right)\right|^{p}dx.
\end{align}
\end{thm}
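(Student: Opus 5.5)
We would follow the Picone-type / ground state substitution route. By density it suffices to take $u\in C_0^\infty(\Omega)$; write $u=u_1 f$ with $f:=u/u_1$, which is locally Lipschitz in $\Omega$ since $u_1>0$ there and $u_1\in C^{1,\alpha}_{\mathrm{loc}}$. Then
\begin{align*}
\nabla u = f\nabla u_1 + u_1\nabla f .
\end{align*}
Apply the pointwise inequality coming from $C_p$ with $\xi=\nabla u$ and $\xi-\eta=f\nabla u_1$, so that $\eta=u_1\nabla f$. This gives
\begin{align*}
|\nabla u|^p = |f\nabla u_1|^p + p|f\nabla u_1|^{p-2}\langle f\nabla u_1,u_1\nabla f\rangle + C_p(\nabla u,u_1\nabla f).
\end{align*}

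Next we integrate and use the eigenfunction equation to produce the exact remainder. The first two terms on the right equal
\begin{align*}
|f|^p|\nabla u_1|^p+\langle|\nabla u_1|^{p-2}\nabla u_1,\nabla(|f|^p)\rangle u_1 ,
\end{align*}
because $p|f|^{p-2}f\nabla f=\nabla|f|^p$. This expression is exactly $|\nabla u_1|^{p-2}\langle\nabla u_1,\nabla(u_1|f|^p)\rangle$. Testing (\ref{eq weak eigenfunction}) with $\varphi=u_1|f|^p=|u|^p/u_1^{p-1}$ gives $\lambda_1\int|u|^p$. Hence we obtain the identity
\begin{align*}
\int_\Omega|\nabla u|^p-\lambda_1\int_\Omega|u|^p=\int_\Omega C_p(\nabla u,u_1\nabla f)\,dx .
\end{align*}
The admissibility of $\varphi$ is the delicate point. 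Since $u$ has compact support and $u_1\ge c>0$ on that support, $\varphi\in W^{1,p}_0$ without difficulty, so no boundary Hopf lemma is needed for smooth compactly supported $u$. The passage to general $u\in W_0^{1,p}$ then uses density, lower semicontinuity or Fatou on the right-hand side, and continuity of the left-hand side.

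It remains to prove the elementary vector inequality
\begin{align*}
C_p(\xi,\eta)\ge c_{\mathrm{deg}}(p)|\eta|^p, \qquad p\ge2,
\end{align*}
for all $\xi,\eta\in\mathbb{R}^n$, with the constant in (\ref{eq degenerate constant}). This is the main obstacle and is where $r_0$ enters. For $p=2$ it is the identity $C_2=|\eta|^2$.

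For $p>2$ we would use homogeneity. Set $a=\xi-\eta$ and normalize $|\eta|=1$, so that $C_p=|a+\eta|^p-|a|^p-p|a|^{p-2}\langle a,\eta\rangle$. We then reduce to a two-variable problem in $r=|a|$ and the angle. Convexity of $|\cdot|^p$ in the direction orthogonal to $a$ indicates that the minimum occurs for $\eta$ antiparallel to $a$. This leaves the one-dimensional function
\begin{align*}
g(r)=|r-1|^p-r^p+pr^{p-1}.
\end{align*}
Minimizing over $r\ge0$, the critical point condition after simplification should produce the equation $r^{p-1}-(p-1)r-(p-2)=0$, whose root is $r_0$. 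We would first try to verify this directly, checking that $\inf g=(p-1)(r_0+1)^{2-p}$. Failing that, we would cite the sharp inequality in \cite{apseit2025sharp}, since the statement credits that paper for the constant.
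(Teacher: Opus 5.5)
Your overall architecture is the one the paper relies on: the ground-state substitution $u=u_1f$, testing the eigenfunction equation with $\varphi=u_1|f|^p$ to obtain the exact identity $\int_\Omega|\nabla u|^p\,dx-\lambda_1\int_\Omega|u|^p\,dx=\int_\Omega C_p(\nabla u,u_1\nabla f)\,dx$, and then a pointwise bound $C_p(\xi,\eta)\ge c_{\mathrm{deg}}(p)|\eta|^p$. The paper does not prove that pointwise bound; it cites it from Cazacu--Krej\v{c}i\v{r}\'{\i}k--Lam--Laptev, with the sharp constant due to Huang--Tong. Your identity, the admissibility of $\varphi$ for $u\in C_0^\infty(\Omega)$, and the Fatou-based density step are all fine.

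\textbf{The gap is in your derivation of the pointwise bound.} You claim that, for fixed $|a|$ and $|\eta|=1$, $C_p$ is minimized at $\eta=-a/|a|$. This is false.

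Write $r=|a|$ and $t=\langle a,\eta\rangle/r$. Then $C_p=\phi(t):=(r^2+1+2rt)^{p/2}-r^p-pr^{p-1}t$. This function is convex in $t$ with critical point $t=-1/(2r)$, which lies in $[-1,1]$ as soon as $r\ge 1/2$. At that point $|a+\eta|=r$ and $C_p=\tfrac p2 r^{p-2}$. For example, with $p=3$ and $r=1$, the antiparallel value is $g(1)=2$, whereas $t=-1/2$ gives $3/2$. So $C_p(\xi,\eta)\ge g(|\xi-\eta|)|\eta|^p$ fails pointwise, and "minimize $g$ over $r\ge 0$" is not justified by your argument.

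The conclusion survives, but you must split into two regimes.
\begin{itemize}
\item For $r<1/2$: on $[-1,1]$ one has $r^2+1+2rt\ge(1-r)^2>r^2$, so $\phi'>0$ and the antiparallel reduction to $g(r)$ is valid.
\item For $r\ge 1/2$: one has $C_p\ge\tfrac p2 r^{p-2}\ge p2^{1-p}=g(1/2)$.
\end{itemize}

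It then remains to minimize $g$ on $[0,1/2]$. On $(0,1)$ the critical equation is $(1-r)^{p-1}+r^{p-1}=(p-1)r^{p-2}$. After the substitution $s=(1-r)/r$ this becomes $s^{p-1}-(p-1)s-(p-2)=0$, so $s=r_0$. Thus the minimizer is $r=1/(1+r_0)<1/2$, not $r_0$ itself as your wording suggests. Using $s^p=(p-1)s^2+(p-2)s$, you get $g=(1+s)^{-p}\bigl(s^p-1+p(s+1)\bigr)=(p-1)(1+r_0)^{2-p}$. The positive root is unique, and $g(0)=1$ with $g'(0^+)=-p<0$. Hence this is the minimum of $g$ on $[0,1/2]$, and $\inf C_p(\xi,\eta)/|\eta|^p=c_{\mathrm{deg}}(p)$.

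Alternatively, your fallback of citing the known sharp $C_p$ bound is exactly what the paper does, though the attribution for the sharp constant is \cite{huang2025lp} rather than \cite{apseit2025sharp}.
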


One can use the following bounds to estimate the value of $c_{\mathrm{deg}}(p)$ in terms of $p$ only.

\begin{prop}[{\cite[Proposition~2.10]{yessirkegenov2026stability}}]
For every $p\geq 2$, we have
\begin{align}\label{lower bound cp}
\frac{1}{2^{p-2}}\leq c_{\mathrm{deg}}(p)\leq\frac{p-1}{2^{p-2}}.
\end{align}
Moreover, $c_{\mathrm{deg}}(p)\to 0$ as $p\to\infty$.
\end{prop}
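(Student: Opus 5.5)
The plan is to derive every claim from elementary properties of the scalar function
\begin{align*}
g(r):=r^{p-1}-(p-1)r-(p-2),\qquad r\geq 0,
\end{align*}
whose positive root is $r_0$, since by \eqref{eq degenerate constant} $c_{\mathrm{deg}}(p)=(p-1)(r_0+1)^{2-p}$. The case $p=2$ is trivial: $c_{\mathrm{deg}}(2)=1$ and both bounds in \eqref{lower bound cp} equal $1$. So assume $p>2$. Then $g$ is strictly convex on $[0,\infty)$, $g(0)=-(p-2)<0$ and $g(r)\to\infty$, so $r_0$ is the unique positive root. Moreover $g<0$ on $[0,r_0)$ and $g>0$ on $(r_0,\infty)$. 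Hence any bound on $r_0$ follows from checking the sign of $g$ at a single point.

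\emph{Upper bound.} The inequality $c_{\mathrm{deg}}(p)\leq (p-1)/2^{p-2}$ is equivalent to $r_0+1\geq 2$, i.e.\ $r_0\geq1$. This follows from $g(1)=1-(p-1)-(p-2)=4-2p<0$. The same bound gives the limit: $0<c_{\mathrm{deg}}(p)\leq (p-1)2^{2-p}\to0$ as $p\to\infty$.

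\emph{Lower bound.} The inequality $c_{\mathrm{deg}}(p)\geq 2^{2-p}$ is equivalent to
\begin{align*}
(r_0+1)^{p-2}\leq 2^{p-2}(p-1),
\end{align*}
that is, $r_0\leq R:=2(p-1)^{1/(p-2)}-1$. By the sign structure of $g$, it suffices to show $g(R)\geq0$. First I would try the algebraic route. Put $s:=r_0+1\geq2$, so the root equation reads $(s-1)^{p-1}=(p-1)s-1$. Since $s\geq 2$ gives $s-1\geq s/2$, one gets
\begin{align*}
\left(\tfrac{s}{2}\right)^{p-2}(s-1)\leq (p-1)s-1,
\qquad\text{hence}\qquad
\left(\tfrac{s}{2}\right)^{p-2}\leq (p-1)+\frac{p-2}{s-1}.
\end{align*}
This is slightly weaker than the target $(s/2)^{p-2}\leq p-1$. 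So the main obstacle is to remove the extra term $\frac{p-2}{s-1}$.

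To remove it, I would evaluate $g(R)$ directly. Write $R+1=2a$ with $a:=(p-1)^{1/(p-2)}\geq1$, so that $a^{p-2}=p-1$. Then
\begin{align*}
g(R)=(2a-1)^{p-1}-(p-1)(2a-1)-(p-2).
\end{align*}
The goal is $(2a-1)^{p-1}\geq (p-1)(2a-1)+(p-2)$. I would use $2a-1\geq a$ to get
\begin{align*}
(2a-1)^{p-1}=(2a-1)^{p-2}(2a-1)\geq a^{p-2}(2a-1)=(p-1)(2a-1).
\end{align*}
This leaves the additive constant $p-2$ to be absorbed. For that I would use a Bernoulli-type estimate $(2a-1)^{p-2}\geq a^{p-2}+(p-2)(a-1)a^{p-3}$. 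Its extra contribution, $(p-2)(a-1)a^{p-3}(2a-1)$, should dominate $p-2$ once one shows $(a-1)a^{p-3}(2a-1)\geq1$, using $a^{p-2}=p-1$ together with $a-1\geq \log(p-1)/(p-2)$.

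If that inequality turns out not to hold uniformly in $p$, I would fall back on a one-variable monotonicity argument for $p\mapsto g(R(p))$. Failing that, I would use the convexity bound $(2a-1)^{p-1}\geq a^{p-1}+(p-1)a^{p-2}(a-1)$, which gives
\begin{align*}
(2a-1)^{p-1}\geq (p-1)(2a-1)+(p-1)(a-1)^2 .
\end{align*}
It would then remain to check the inequality $(p-1)(a-1)^2\geq p-2$ in the regime where $a$ is close to $1$, where it is tightest.
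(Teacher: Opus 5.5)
Your upper bound, the limit $c_{\mathrm{deg}}(p)\to0$, and the reduction of the lower bound to $g(R)\geq0$ are correct. Here $R=2a-1$ and $a=(p-1)^{1/(p-2)}$, and the reduction uses the sign structure of the convex function $g$. The gap is in verifying $g(R)\geq0$: as written, none of your routes covers $2<p<3$.

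Your first route rests on $(2a-1)^{p-2}\geq a^{p-2}+(p-2)(a-1)a^{p-3}$. This is the tangent-line inequality for $x\mapsto x^{p-2}$ at $x=a$, and it holds only when that map is convex, i.e.\ $p\geq3$. For $2<p<3$ the map is concave, and since $a>1$ the inequality is strictly reversed.

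For $p\geq3$ the route does close, because
\begin{align*}
(a-1)a^{p-3}(2a-1)=(p-1)(a-1)\frac{2a-1}{a}\geq(p-1)(a-1)\geq\frac{(p-1)\ln(p-1)}{p-2}\geq1 .
\end{align*}
The last step is $t\ln t\geq t-1$ with $t=p-1$.

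The monotonicity fallback is not an argument, only a name for one.

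The convexity fallback is miscomputed. With $a^{p-1}=(p-1)a$ and $a^{p-2}=p-1$, the tangent bound gives
\begin{align*}
(2a-1)^{p-1}\geq (p-1)a+(p-1)^{2}(a-1)=(p-1)(2a-1)+(p-1)(p-2)(a-1),
\end{align*}
not $(p-1)(2a-1)+(p-1)(a-1)^{2}$. The inequality you displayed is in fact false near $p=2$: as $p\to2^{+}$, $a\to e$, so the left side tends to $2e-1$ while the right side tends to $2e-1+(e-1)^{2}$. The condition you would then need, $(p-1)(a-1)^{2}\geq p-2$, also fails. At $p=4$ you get $3(\sqrt3-1)^{2}\approx1.61<2$, and for large $p$ the left side behaves like $(\ln p)^{2}/p$. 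That is exactly the regime $a\to1$ you expected to be the only delicate one.

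The repair is to use the correctly computed tangent bound. It is valid for every $p>2$, because $x\mapsto x^{p-1}$ is convex there:
\begin{align*}
g(R)\geq(p-1)(p-2)(a-1)-(p-2)=(p-2)\bigl((p-1)(a-1)-1\bigr)\geq0,
\end{align*}
using the same estimate $(p-1)(a-1)\geq1$ as above. This handles all $p>2$ at once and removes the need for a case split.
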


For $1<p<2$, the corresponding remainder has a weighted quadratic form. The following estimate provides the starting point for the singular case.

\begin{thm}[\cite{apseit2025sharp}]\label{thm poinc 1p2}
Let $1<p<2$, $n\geq 2$ and let $\Omega\subset\mathbb{R}^{n}$ be an open bounded domain. Then, for every $u\in W_{0}^{1,p}(\Omega)$, we have
\begin{align}
&\int_{\Omega}|\nabla u|^{p}dx-\lambda_{1}(\Omega,p)\int_{\Omega}|u|^{p}dx\notag\\
&\qquad\geq \underline{c}_{\mathrm{sing}}(p)\int_{\Omega}
\left(|\nabla u|+\left|\frac{u}{u_{1}}\nabla u_{1}\right|\right)^{p-2}
\left|u_1\nabla\left(\frac{u}{u_{1}}\right)\right|^{2}dx,
\label{eq singular remainder preliminaries}
\end{align}
where
\begin{align*}
\underline{c}_{\mathrm{sing}}(p)
:=\inf_{s^{2}+t^{2}>0}
\frac{\left(t^{2}+s^{2}+2s+1\right)^{p/2}-1-ps}
{\left(\sqrt{t^{2}+s^{2}+2s+1}+1\right)^{p-2}(s^{2}+t^{2})}\in\left(0,\frac{p(p-1)}{2^{p-1}}\right].
\end{align*}
\end{thm}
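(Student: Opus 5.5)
The strategy is to write $u=u_1 f$ with $f:=u/u_1$ and express the deficit exactly through the $C_p$-functional. By density it is enough to take $u\in C_0^\infty(\Omega)$; the general case then follows by approximation in $W_0^{1,p}(\Omega)$, since the left side is continuous and the right side is lower semicontinuous by Fatou's lemma. Set $\xi:=\nabla u=u_1\nabla f+f\nabla u_1$ and $\eta:=u_1\nabla f$, so that $\xi-\eta=f\nabla u_1$. The definition \eqref{cp formula} then gives
\begin{align*}
|\nabla u|^p=|f|^p|\nabla u_1|^p+p|f\nabla u_1|^{p-2}\langle f\nabla u_1,u_1\nabla f\rangle+C_p(\nabla u,u_1\nabla f).
\end{align*}

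The first step is to identify the middle term. Note that $p|f|^{p-2}f\,u_1\nabla f\cdot|\nabla u_1|^{p-2}\nabla u_1=u_1|\nabla u_1|^{p-2}\nabla u_1\cdot\nabla(|f|^p)$. Adding $|f|^p|\nabla u_1|^p$, the first two terms combine to $|\nabla u_1|^{p-2}\nabla u_1\cdot\nabla(u_1|f|^p)$. Testing the weak eigenfunction equation \eqref{eq weak eigenfunction} with $\varphi=u_1|f|^p=|u|^p/u_1^{p-1}$ then yields the identity
\begin{align*}
\int_\Omega|\nabla u|^p\,dx-\lambda_1\int_\Omega|u|^p\,dx=\int_\Omega C_p(\nabla u,u_1\nabla f)\,dx.
\end{align*}
This test function needs justification, since $u_1^{1-p}$ blows up at $\partial\Omega$. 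I would handle it by the standard Picone-type regularization: replace $u_1$ by $u_1+\varepsilon$, test with $|u|^p/(u_1+\varepsilon)^{p-1}\in W_0^{1,p}$, and pass to the limit $\varepsilon\to0$ using Fatou's lemma. This gives the inequality $\geq$, which is all that is needed. Here $u_1\in C^{1,\beta}$ and $u_1>0$ by Harnack's inequality.

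The second step is a pointwise lower bound on $C_p$ for $1<p<2$. Write $a:=f\nabla u_1=\xi-\eta$. By the $(p-1)$-homogeneity scaling $C_p(\xi,\eta)=|a|^pC_p(a/|a|+\eta/|a|,\eta/|a|)$, we may normalize $|a|=1$ and decompose $\eta/|a|=(s,t)$ into components parallel and orthogonal to $a$. Then $|\xi|^2=t^2+s^2+2s+1$ and
\begin{align*}
C_p=(t^2+s^2+2s+1)^{p/2}-1-ps.
\end{align*}
The definition of $\underline{c}_{\mathrm{sing}}(p)$ gives exactly
\begin{align*}
C_p\geq\underline{c}_{\mathrm{sing}}(p)\,(|\xi|+|a|)^{p-2}|a|^{p-2}\cdot|a|^{2-p}\,|\eta|^2/|a|^{2}\cdot|a|^{2},
\end{align*}
which after unscaling is $\underline{c}_{\mathrm{sing}}(p)(|\nabla u|+|f\nabla u_1|)^{p-2}|u_1\nabla f|^2$. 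The case $a=0$ requires a separate remark: there $C_p=|\eta|^p$, and since $|\xi|=|\eta|$ this equals $(|\xi|+0)^{p-2}|\eta|^2$, so the bound still holds because $\underline{c}_{\mathrm{sing}}\leq1$.

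The third step is to establish the stated range of $\underline{c}_{\mathrm{sing}}$. The upper bound comes from the second-order Taylor expansion as $(s,t)\to0$ along $t=0$, where the quotient tends to $\tfrac{p(p-1)}{2}/2^{p-2}$. Positivity follows from strict convexity of $|\cdot|^p$, which makes the quotient positive away from the origin, together with its limits near $0$ (bounded below by $\min\{p(p-1),p\}/2^{p-1}$-type constants) and as $|(s,t)|\to\infty$ (the quotient tends to $1$).

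I expect the main obstacle to be this positivity of the infimum, in particular uniformity near the point $(s,t)=(-1,0)$, where $\xi=0$ and the weight $(|\xi|+1)^{p-2}$ stays bounded. The rigorous justification of the singular test function in the first step is the secondary difficulty.
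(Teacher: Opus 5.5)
Your proposal is correct and is essentially the argument the paper points to for this cited result: testing the eigenfunction equation with $|u|^p/u_1^{p-1}$ turns the deficit into $\int_\Omega C_p\bigl(\nabla u,u_1\nabla(u/u_1)\bigr)\,dx$. The pointwise bound $C_p(\xi,\eta)\geq\underline{c}_{\mathrm{sing}}(p)(|\xi|+|\xi-\eta|)^{p-2}|\eta|^2$ from Chen--Tang \cite{CT24}, which is exactly your normalization to the variables $(s,t)$, then finishes it.

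Three minor points:
\begin{itemize}
\item For $u\in C_0^\infty(\Omega)$ the test function $|u|^p/u_1^{p-1}$ already lies in $C_0^1(\Omega)$, as the paper notes, so your $\varepsilon$-regularization is unnecessary.
\item The point $(s,t)=(-1,0)$ is not an obstacle, since the quotient is continuous there with value $p-1$.
\item The scaling you use is the $p$-homogeneity of $C_p$, not $(p-1)$-homogeneity.
\end{itemize}
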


As for $c_{\mathrm{deg}}(p)$, we provide explicit bounds for $\underline{c}_{\mathrm{sing}}(p)$ depending only on $p$, which allow us to avoid the infimum defining $\underline{c}_{\mathrm{sing}}(p)$ when estimating the stability constant.

\begin{prop}\label{prop for expl 1<p<2}
For every $1<p<2$, we have
\begin{align}\label{eq singular constant bounds}
\left(\frac{3-p}{4-p}\right)^{3-p}\frac{p(p-1)}{2^{p-2}}
\leq \underline{c}_{\mathrm{sing}}(p)\leq\frac{p(p-1)}{2^{p-1}}.
\end{align}
\end{prop}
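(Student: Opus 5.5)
The plan is to write the numerator of the quotient defining $\underline{c}_{\mathrm{sing}}(p)$ as a second-order Taylor remainder of $F(\xi):=|\xi|^{p}$ on $\mathbb{R}^{2}$. Set $z:=(s,t)$, $e:=(1,0)$ and $r:=|e+z|=\sqrt{t^{2}+s^{2}+2s+1}$. Then the quotient is
\begin{align*}
Q(s,t)=\frac{F(e+z)-F(e)-\langle\nabla F(e),z\rangle}{(r+1)^{p-2}|z|^{2}},
\end{align*}
and the two bounds in \eqref{eq singular constant bounds} are proved separately.

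\emph{Upper bound.} It suffices to evaluate $Q$ along a particular sequence. Take $t=0$ and $s\to0$. The expansion $r^{p}=(1+2s+s^{2})^{p/2}=1+ps+\tfrac{p(p-1)}{2}s^{2}+o(s^{2})$ gives numerator $\tfrac{p(p-1)}{2}s^{2}+o(s^{2})$, while the denominator tends to $2^{p-2}s^{2}$. Hence $\underline{c}_{\mathrm{sing}}(p)\le\lim_{s\to0}Q(s,0)=p(p-1)/2^{p-1}$.

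\emph{Lower bound.} I would use the integral form of the remainder:
\begin{align*}
F(e+z)-F(e)-\langle\nabla F(e),z\rangle=\int_{0}^{1}(1-\tau)\,D^{2}F(e+\tau z)[z,z]\,d\tau .
\end{align*}
For $1<p<2$ the smallest eigenvalue of $D^{2}F(\xi)$ is $p(p-1)|\xi|^{p-2}$, so $D^{2}F(\xi)[z,z]\ge p(p-1)|\xi|^{p-2}|z|^{2}$. The segment point satisfies $|e+\tau z|=|(1-\tau)e+\tau(e+z)|\le(1-\tau)+\tau r$. Since $p-2<0$, this gives
\begin{align*}
Q(s,t)\ge p(p-1)\,\frac{\int_{0}^{1}(1-\tau)\bigl(1-\tau+\tau r\bigr)^{p-2}d\tau}{(1+r)^{p-2}} .
\end{align*}
The problem is thereby reduced to a one-variable inequality: show that for all $r\ge0$,
\begin{align*}
I(r):=\int_{0}^{1}(1-\tau)\bigl(1-\tau+\tau r\bigr)^{p-2}d\tau\ \ge\ \Bigl(\frac{3-p}{4-p}\Bigr)^{3-p}2^{2-p}(1+r)^{p-2}.
\end{align*}
The crude bound $1-\tau+\tau r\le 1+r$ only yields the constant $p(p-1)/2$. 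This is weaker than the claimed bound for $p$ near $1$, so more care is needed.

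To get the sharper constant, I would restrict the integral to $\tau\in[0,a]$ for a parameter $a\in(0,1)$. On that interval, $1-\tau+\tau r\le \max\{1,r\}\cdot(\ldots)$ should be replaced by a bound of the form $\le \theta(a)(1+r)$. Equivalently, I would compare the weight $(1-\tau)$ with a power of $(1-\tau+\tau r)/(1+r)$ and integrate explicitly, obtaining a lower bound $\phi(a)(1+r)^{p-2}$ with $\phi$ an elementary function. The form $\bigl(\frac{3-p}{4-p}\bigr)^{3-p}$ strongly suggests that the final step is optimizing a factor such as $a(1-a)^{3-p}$, or $x^{3-p}(1-x)$, over the free parameter; this maximum is attained at $x=\tfrac{3-p}{4-p}$. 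I also expect to treat the regimes $r\le1$ and $r\ge1$ separately, by homogeneity in $(1,r)$.

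The main obstacle is this last calculus step: choosing the right splitting or comparison so that the optimization produces exactly $\bigl(\frac{3-p}{4-p}\bigr)^{3-p}2^{2-p}$ uniformly in $r$. The Hessian reduction and the upper bound are routine.
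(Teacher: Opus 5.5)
Your reduction matches the paper's. The integral Taylor remainder, the Hessian bound $D^{2}F(\xi)[z,z]\ge p(p-1)|\xi|^{p-2}|z|^{2}$ and $|e+\tau z|\le 1-\tau+\tau r$ bring everything down to the inequality for $I(r)$. Your limit $s\to0$, $t=0$ correctly gives the upper bound, which the paper simply takes from Theorem~\ref{thm poinc 1p2}. (When $t=0$ and $s\le-1$ the segment passes through the origin, where $F$ is not $C^{2}$. The remainder formula still holds because $|\xi|^{p-2}$ is integrable along the segment, but this needs a sentence.)

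The gap is that the inequality for $I(r)$ is the entire nontrivial content of the lower bound, and you do not prove it. Moreover, the mechanism you sketch cannot prove it. Since $\sup_{r\ge0}\frac{1-\tau+\tau r}{1+r}=\max\{1-\tau,\tau\}$, on $[0,a]$ you are forced to take $\theta(a)=1$ (look at $\tau=0$, $r=0$). That gives at most $\int_0^a(1-\tau)\,d\tau\le\frac12$. On a general $[b,a]$ the resulting constant is at most $\frac{2m-1}{2}m^{p-2}\le\frac12$ with $m=\max\{1-b,a\}$, because this function increases on $[\frac12,1]$. Letting the interval depend on $r$ does not help either. At $r=0$ truncation gives at most $\frac12$, while the target constant $\bigl(\frac{3-p}{4-p}\bigr)^{3-p}2^{2-p}$ exceeds $\frac12$ for $1<p<2$. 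So truncation plus a pointwise bound never beats the crude constant you already discarded.

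What is missing is an averaging inequality that uses $\int_0^1(1-\tau+\tau r)\,d\tau=\frac{1+r}{2}$, rather than a pointwise bound on the linear factor. The paper applies H\"older's inequality with exponents $3-p$ and $\frac{3-p}{2-p}$ to the factorization
\[
(1-\tau)^{\frac{1}{3-p}}=\Bigl((1-\tau)(1-\tau+\tau r)^{p-2}\Bigr)^{\frac{1}{3-p}}(1-\tau+\tau r)^{\frac{2-p}{3-p}},
\]
which gives
\[
\frac{3-p}{4-p}=\int_0^1(1-\tau)^{\frac{1}{3-p}}\,d\tau\le I(r)^{\frac{1}{3-p}}\Bigl(\frac{1+r}{2}\Bigr)^{\frac{2-p}{3-p}}.
\]
Raising this to the power $3-p$ gives exactly your target inequality. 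So the factor $\frac{3-p}{4-p}$ is $\int_0^1(1-\tau)^{1/(3-p)}\,d\tau$, not the result of optimizing a free parameter.

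A cruder averaging does not suffice. Jensen's inequality for the probability measure $2(1-\tau)\,d\tau$ yields only $I(r)\ge\frac12\bigl(\frac{2+r}{3}\bigr)^{p-2}\ge\frac12\bigl(\frac32\bigr)^{2-p}(1+r)^{p-2}$. This is strictly weaker than the stated constant for $1<p<2$, so the specific H\"older splitting is what produces $\bigl(\frac{3-p}{4-p}\bigr)^{3-p}$.
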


We also use the following upper bound for the Poincar\'e deficit in the singular case. This estimate will be used to obtain an upper bound in the gradient stability inequality.

\begin{thm}[\cite{apseit2025sharp}]\label{thm poinc 1p2 upper}
Let $1<p<2$, $n\geq 2$ and let $\Omega\subset\mathbb{R}^{n}$ be an open bounded domain. Then, for every $u\in W_{0}^{1,p}(\Omega)$, we have
\begin{align}
&\int_{\Omega}|\nabla u|^{p}dx-\lambda_{1}(\Omega,p)\int_{\Omega}|u|^{p}dx\notag\\
&\qquad\leq \overline{c}_{\mathrm{sing}}(p)\int_{\Omega}
\left(|\nabla u|+\left|\frac{u}{u_{1}}\nabla u_{1}\right|\right)^{p-2}
\left|u_1\nabla\left(\frac{u}{u_{1}}\right)\right|^{2}dx,
\label{eq singular remainder upper preliminaries}
\end{align}
where
\begin{align}\label{eq singular upper constant}
\overline{c}_{\mathrm{sing}}(p)
:=\sup_{s^{2}+t^{2}>0}
\frac{\left(t^{2}+s^{2}+2s+1\right)^{p/2}-1-ps}
{\left(\sqrt{t^{2}+s^{2}+2s+1}+1\right)^{p-2}(s^{2}+t^{2})}\in\left[\frac{p}{2^{p-1}},+\infty\right).
\end{align}
\end{thm}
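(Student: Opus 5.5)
The plan is to prove an exact Picone-type identity that expresses the deficit as the integral of the $C_p$-functional \eqref{cp formula}, and then to bound $C_p$ pointwise by the quantity whose supremum defines $\overline{c}_{\mathrm{sing}}(p)$.

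\emph{Step 1: the identity.} Write $g:=u/u_1$, so $\nabla u=u_1\nabla g+g\nabla u_1$. Apply \eqref{cp formula} with
\begin{align*}
\xi=\nabla u,\qquad \eta=u_1\nabla g,\qquad \xi-\eta=g\nabla u_1 .
\end{align*}
This gives
\begin{align*}
C_p(\nabla u,u_1\nabla g)=|\nabla u|^p-|g|^p|\nabla u_1|^p-p|g|^{p-2}g\,u_1|\nabla u_1|^{p-2}\langle\nabla u_1,\nabla g\rangle .
\end{align*}
The last two terms add up to $|\nabla u_1|^{p-2}\langle\nabla u_1,\nabla(u_1|g|^p)\rangle$. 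Testing \eqref{eq weak eigenfunction} for $u_1$ with $\varphi=u_1|g|^p=|u|^p/u_1^{p-1}$ turns their integral into $\lambda_1(\Omega,p)\int_\Omega|u|^p\,dx$. Hence
\begin{align*}
\int_\Omega|\nabla u|^pdx-\lambda_1(\Omega,p)\int_\Omega|u|^pdx=\int_\Omega C_p\!\left(\nabla u,u_1\nabla\left(\frac{u}{u_1}\right)\right)dx .
\end{align*}

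\emph{Step 2: the pointwise bound.} I will show that for all $\xi,\eta\in\mathbb{R}^n$,
\begin{align*}
C_p(\xi,\eta)\leq\overline{c}_{\mathrm{sing}}(p)\,(|\xi|+|\xi-\eta|)^{p-2}|\eta|^2 .
\end{align*}
\begin{itemize}
\item If $a:=\xi-\eta\neq0$, both sides are homogeneous of degree $p$ under joint scaling of $(\xi,\eta)$, and both are rotation invariant. So take $|a|=1$ and $a=e_1$. Write $\eta=(s,t)$, where $s$ is the component along $e_1$ and $t$ the size of the orthogonal part. Then $|\xi|^2=(1+s)^2+t^2$, and:
\begin{itemize}
\item $C_p(\xi,\eta)=(t^2+s^2+2s+1)^{p/2}-1-ps$;
\item the right-hand side (without the constant) is $(\sqrt{t^2+s^2+2s+1}+1)^{p-2}(s^2+t^2)$.
\end{itemize}
The inequality is then exactly the definition \eqref{eq singular upper constant}.
\item If $a=0$, then $\eta=\xi$ and the claim reads $|\xi|^p\leq\overline{c}_{\mathrm{sing}}(p)|\xi|^p$. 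This holds because letting $s\to\infty$ with $t=0$ shows the supremum is at least $1$.
\end{itemize}
With $\xi=\nabla u$, $\eta=u_1\nabla g$ we have $|\xi-\eta|=|\tfrac{u}{u_1}\nabla u_1|$. Integrating the pointwise bound and using Step 1 gives \eqref{eq singular remainder upper preliminaries}.

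\emph{Step 3: justification (expected main obstacle).} The function $|u|^p/u_1^{p-1}$ need not belong to $W_0^{1,p}(\Omega)$, since $u_1$ degenerates at $\partial\Omega$. I would proceed in three stages.
\begin{itemize}
\item First take $u\in C_c^\infty(\Omega)$. On $\operatorname{supp}u$ the eigenfunction $u_1$ is bounded below and $C^{1,\alpha}$, so $\varphi$ is an admissible test function and the identity holds exactly.
\item For general $u\in W_0^{1,p}(\Omega)$, repeat the computation with $u_1+\varepsilon$ in place of $u_1$, using a truncation of $u$ if needed, so that $\varphi_\varepsilon=|u|^p/(u_1+\varepsilon)^{p-1}\in W_0^{1,p}(\Omega)$.
\item Let $\varepsilon\to0$. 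The $C_p$ integrand is nonnegative and converges a.e., so Fatou's lemma or monotonicity arguments pass to the limit.
\end{itemize}
For the upper bound, an alternative is to approximate $u$ in $W^{1,p}$ by smooth compactly supported $u_k$. The deficit is continuous along such a sequence. The right-hand side needs care, since it has negative power $p-2$. I would handle it through dominated convergence, using the bound
\begin{align*}
(|\xi|+|\xi-\eta|)^{p-2}|\eta|^2\leq(|\xi|+|\xi-\eta|)^{p}\lesssim|\nabla u|^p+\left|\frac{u}{u_1}\nabla u_1\right|^p ,
\end{align*}
where the last term is controlled in $L^1$ by the Hardy-type inequality coming from the same Picone identity. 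If the right-hand side is infinite, the inequality is trivial.
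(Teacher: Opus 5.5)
Your proposal is correct and matches the argument the paper indicates for this cited result. That argument consists of the Picone-type identity writing the deficit as $\int_\Omega C_p\bigl(\nabla u,u_1\nabla(u/u_1)\bigr)\,dx$ for $u\in C_0^\infty(\Omega)$, the pointwise bound $C_p(\xi,\eta)\le\overline{c}_{\mathrm{sing}}(p)(|\xi|+|\xi-\eta|)^{p-2}|\eta|^2$ from \cite[Lemma~2.3]{CT24} (which you rederive by scaling and rotation), and density through the Hardy-type bound \eqref{bound grad u1 ratio}, as in the proof of Theorem~\ref{thm upper gradient stability}.

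Two details need tidying. First, drop the $u_1+\varepsilon$/Fatou variant, since Fatou only gives the reverse inequality (deficit $\geq\int C_p$); use the density route with generalized dominated convergence instead. Second, add the short check that $\overline{c}_{\mathrm{sing}}(p)\in[p/2^{p-1},\infty)$, which the statement asserts and which is needed for the inequality to be meaningful. The ratio is continuous on $\mathbb{R}^2\setminus\{0\}$, tends to $1$ at infinity, stays bounded near the origin by a second-order Taylor expansion, and tends to $p/2^{p-1}$ along $s=0$, $t\to0$.
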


The bounds (\ref{stability}), (\ref{eq singular remainder preliminaries}) already suggest the stability exponent $\max\{2,p\}$. Indeed, the factor $u_{1}\nabla(u/u_{1})$ appears to the power $p$ when $p\geq2$ and to the power $2$, with an additional weight, when $1<p<2$. Since these remainders bound the Poincar\'e deficit from below, they hint at the corresponding powers in the gradient stability estimate. The inequalities (\ref{stability}),
(\ref{eq singular remainder preliminaries}) and
(\ref{eq singular remainder upper preliminaries}) are obtained
by the usual argument based on integration by parts and the first eigenfunction equation. The quantitative remainders, however, rely on the bounds for the $C_p$-term established by Cazacu, Krej{\v{c}}i{\v{r}}{\'\i}k, Lam and Laptev
\cite{cazacu2024hardy} for $p\geq2$ and by Chen and Tang \cite{CT24} for $1<p<2$. In the case $p\geq 2$, the sharp form of the constant $c_{\mathrm{deg}}(p)$ in \eqref{eq degenerate constant} was obtained by Huang and Tong \cite{huang2025lp}. One can also find improvements of the form (\ref{stability}) and (\ref{eq singular remainder preliminaries}) in the work of Bobkov and Kolonitskii \cite{bobkov2023improved}, where the explicit constant comes from the hidden convexity \cite[Eq. (2.10)]{BrascoPrinariZagati2022}.

For $p>1$ and $\Omega\in\mathcal{C}_{n}$, the positive first eigenfunction $u_{1}$ is log-concave \cite{sakaguchi1987concavity}. Hence $u_{1}^{p}$ is a positive log-concave weight, which permits the use of the following weighted Poincar\'e inequality with a constant bounded in terms of the diameter of $\Omega$.

\begin{thm}[{\cite[Theorem 1.1]{ferone2012remark}}]\label{thm log-convex stab}
Let $1<p<\infty$, $n\geq 2$ and $\Omega\in\mathcal{C}_{n}$, and let $\omega$ be a positive log-concave weight on $\Omega$. Then there exists a constant $C>0$, depending on $\Omega$, $p$ and $\omega$, such that every Lipschitz function $u$ with
\begin{align}\label{eq weighted mean condition}
\int_{\Omega}|u|^{p-2}u\omega dx=0
\end{align}
satisfies
\begin{align}\label{eq weighted poincare}
\int_{\Omega}|\nabla u|^{p}\omega dx
\geq C\inf_{t\in\mathbb{R}}
\int_{\Omega}|u-t|^{p}\omega dx.
\end{align}
Moreover,
\begin{align}\label{eq weighted poincare coefficient}
C\geq
\left(\frac{\pi_{p}}{\mathrm{diam}(\Omega)}\right)^{p}.
\end{align}
\end{thm}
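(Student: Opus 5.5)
Since the infimum over $t$ in \eqref{eq weighted poincare} is at most the value at $t=0$, it suffices to prove the stronger statement
\begin{align*}
\int_{\Omega}|\nabla u|^{p}\omega\,dx\geq\left(\frac{\pi_{p}}{\mathrm{diam}(\Omega)}\right)^{p}\int_{\Omega}|u|^{p}\omega\,dx
\end{align*}
for every Lipschitz $u$ satisfying the weighted mean condition \eqref{eq weighted mean condition}. I would follow the Payne--Weinberger strategy: reduce the $n$-dimensional problem to one-dimensional problems on segments of length at most $d:=\mathrm{diam}(\Omega)$, with log-concave weights.

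\emph{Step 1 (bisection).} For a convex set $K\subset\Omega$ on which $\int_K|u|^{p-2}u\,\omega\,dx=0$, fix a two-dimensional direction plane. For each angle $\theta$, take the hyperplane orthogonal to that plane in direction $\theta$ that bisects $K$ in $\omega$-measure. The quantity $\int_{K^+_\theta}|u|^{p-2}u\,\omega\,dx$ depends continuously on $\theta$ and changes sign as $\theta\mapsto\theta+\pi$. Hence some hyperplane splits $K$ into two convex pieces, each again satisfying the mean condition. Iterating in all pairs of directions produces a partition of $\Omega$ into convex cells that become arbitrarily thin in $n-1$ directions. Each cell satisfies the constraint, and each is close to a segment $[0,L]$ with $L\le d$. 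If the inequality holds on every cell with the constant $(\pi_p/d)^p$, summing over the cells gives the result.

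\emph{Step 2 (passage to needles).} On a thin cell, write $x=(s,y)$ with $s$ along the needle. Up to errors that vanish in the limit, $\int|\nabla u|^p\omega\ge\int|\partial_s u|^p\omega$, and the integrals reduce to one-dimensional integrals with weight $w(s)=\bar\omega(s)\,h(s)$. Here $h(s)$ is the $(n-1)$-measure of the cross-section, so $h^{1/(n-1)}$ is concave by Brunn--Minkowski, and $\bar\omega$ is essentially the value of $\omega$ on the needle. Hence $w$ is log-concave, and the constraint becomes $\int_0^L|v|^{p-2}v\,w\,ds=0$. Justifying this limit uniformly (for instance by the needle-decomposition/localization lemma, or a compactness argument on Lipschitz $u$) is technical but standard.

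\emph{Step 3 (1D estimate; main obstacle).} We need
\begin{align*}
\int_0^L|v'|^pw\,ds\ge(\pi_p/L)^p\int_0^L|v|^pw\,ds
\end{align*}
under the constraint, for log-concave $w$. The minimizer solves the weighted Neumann problem $-(w|v'|^{p-2}v')'=\mu w|v|^{p-2}v$ with $v'(0)=v'(L)=0$. I would use a Pr\"ufer substitution with the generalized sine $\sin_p$, namely $v=r\sin_p\phi$ and $v'=\mu^{1/p}r\cos_p\phi$ (suitably scaled). The resulting phase equation has the form $\phi'=\mu^{1/p}-\frac{w'}{w}\cdot(\text{term of sign }\sin_p\phi\,|\cos_p\phi|^{p-2}\cos_p\phi)$. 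Log-concavity means $w'/w$ is nonincreasing, and a comparison argument, as in Payne--Weinberger or the one-dimensional argument of Ferone--Nitsch--Trombetti, shows that the phase cannot advance by $\pi_p$ over length $L$ unless $\mu^{1/p}L\ge\pi_p$. This gives $\mu\ge(\pi_p/L)^p\ge(\pi_p/d)^p$. I expect the main difficulty to be this comparison in the nonlinear phase equation, because the sign structure of the weight term must be controlled on each half-oscillation. The approximation in Step 2 is the secondary difficulty.
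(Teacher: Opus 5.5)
The paper gives no proof of this statement. It quotes it from Ferone--Nitsch--Trombetti, and your plan is the Payne--Weinberger route on which that result rests: bisection into needles, a log-concave one-dimensional weight, and a lower bound for the first nontrivial weighted Neumann eigenvalue on a segment. The outer reductions are fine modulo the technicalities you acknowledge. The gap is Step~3, which you correctly call the heart of the matter: the comparison is never carried out. Worse, the phase equation you write has the wrong sign, and with that sign the natural comparison gives an \emph{upper} bound on $L$, which is useless.

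Normalize $\sin_p$ by $(|\sin_p'|^{p-2}\sin_p')'+|\sin_p|^{p-2}\sin_p=0$, so its half-period is $\pi_p$, and put $\cos_p:=\sin_p'$. Writing $v=r\sin_p\phi$, $v'=\mu^{1/p}r\cos_p\phi$, the equation $-(w|v'|^{p-2}v')'=\mu w|v|^{p-2}v$ gives
\begin{align*}
\phi'=\mu^{1/p}+\frac{1}{p-1}\,\frac{w'}{w}\,g(\phi),\qquad g:=\sin_p|\cos_p|^{p-2}\cos_p ,
\end{align*}
with a plus sign. The missing idea is to freeze $w'/w$ at a nodal point.
\begin{itemize}
\item Normalize $\phi(0)=\pi_p/2$. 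Since $g$ vanishes on $\frac{\pi_p}{2}\mathbb{Z}$, $\phi$ crosses these levels only upward. Since $v$ changes sign and $v'(L)=0$, there are $0<x_0<x_1\leq L$ with $\phi\in(\pi_p/2,\pi_p)$ on $(0,x_0)$, $\phi(x_0)=\pi_p$, $\phi\in(\pi_p,3\pi_p/2)$ on $(x_0,x_1)$ and $\phi(x_1)=3\pi_p/2$.
\item Put $a:=(w'/w)(x_0)$. Here $w'/w$ is nonincreasing, $g<0$ on $(\pi_p/2,\pi_p)$ and $g>0$ on $(\pi_p,3\pi_p/2)$. Hence on both halves $\frac{w'}{w}g(\phi)\leq a\,g(\phi)$, that is, $\phi'\leq F(\phi):=\mu^{1/p}+\frac{a}{p-1}g(\phi)$. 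So the phase is dominated by that of the exponential weight $e^{ax}$.
\item Integrate $\phi'/F(\phi)\leq 1$; positivity of $F$ on $[\pi_p/2,3\pi_p/2]$ is forced, since otherwise $\phi$ could not reach $3\pi_p/2$. Then pair $\psi$ with $2\pi_p-\psi$, using $g(2\pi_p-\psi)=-g(\psi)$. This yields
\begin{align*}
L\geq x_1\geq\int_{\pi_p/2}^{3\pi_p/2}\frac{d\psi}{F(\psi)}
=\int_{\pi_p/2}^{\pi_p}\frac{2\mu^{1/p}\,d\psi}{\mu^{2/p}-\frac{a^{2}}{(p-1)^{2}}g(\psi)^{2}}
\geq\frac{\pi_p}{\mu^{1/p}},
\end{align*}
i.e.\ $\mu\geq(\pi_p/L)^p\geq(\pi_p/\mathrm{diam}(\Omega))^p$.
\end{itemize}

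Two smaller points for Step~2.

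First, replace the heuristic weight $\bar\omega h$ by the exact marginal $w(s)=\int_{K_s}\omega(s,y)\,dy$ of $\omega\,\mathbf{1}_K$. It is log-concave by Pr\'ekopa--Leindler, with no approximation. A positive log-concave $\omega$ need not be continuous up to $\partial\Omega$, so ``$\omega$ is essentially constant on cross-sections'' can fail near parts of the boundary.

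Second, prove the one-dimensional bound in the unconstrained form
\begin{align*}
\int_0^L|v'|^pw\,ds\geq\left(\frac{\pi_p}{L}\right)^p\inf_{t\in\mathbb{R}}\int_0^L|v-t|^pw\,ds .
\end{align*}
The mean condition survives the needle approximation only approximately, and this form is stable under it. Its Euler--Lagrange equation is exactly your Neumann problem, with no multiplier coming from the constraint. Under the mean condition the infimum over $t$ is attained at $t=0$, so your initial reduction is in fact an equivalence.
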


In the proofs, we apply (\ref{eq weighted poincare}) together with (\ref{eq weighted poincare coefficient}) to $u/u_{1}-c$ with weight $u_{1}^{p}$. Choosing $c$ to minimize $\int_{\Omega}|u-cu_{1}|^{p}dx$ gives the weighted mean condition (\ref{eq weighted mean condition}). This links the weighted derivative in the remainder estimates to the distance from the set of optimizers.

The resulting stability estimates still involve $\lambda_{1}(\Omega,p)$. To obtain the geometric fundamental gap bound, we will use both sides of the following estimate to express this dependence in terms of the inradius.

\begin{thm}[{\cite[Theorem~1.2 and Remark~1.4]{brasco2018principal}}]\label{thm brasco inradius}
Let $1<p<\infty$, $n\geq 2$ and $\Omega\in\mathcal{C}_{n}$. Then
\begin{align}\label{eq brasco inradius bounds}
\left(\frac{\pi_{p}}{2r_{\Omega}}\right)^{p}
\leq\lambda_{1}(\Omega,p)
\leq\frac{\lambda_{1}(B_{1}(0),p)}{r_{\Omega}^{p}}.
\end{align}
Moreover, both constants are sharp.
\end{thm}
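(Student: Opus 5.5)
We plan to prove the two inequalities in \eqref{eq brasco inradius bounds} separately. The upper bound is elementary. The lower bound is the substantive part, and we intend to prove it with a one-dimensional supersolution built from the distance function. Sharpness will then be shown with balls for the upper bound and thin slab-like convex domains for the lower bound.

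\emph{Upper bound.} By definition of $r_\Omega$, after a translation $\Omega$ contains a ball $B_{r}(x_0)$ for every $r<r_\Omega$. Since $W^{1,p}_0(B_r(x_0))\subset W^{1,p}_0(\Omega)$, the variational characterization \eqref{eq first eigenvalue} gives the domain monotonicity $\lambda_1(\Omega,p)\leq\lambda_1(B_r(x_0),p)$. The change of variables $x=x_0+ry$ shows $\lambda_1(B_r(x_0),p)=r^{-p}\lambda_1(B_1(0),p)$. Letting $r\to r_\Omega$ gives the right-hand inequality. Equality holds when $\Omega$ is a ball, so this constant is sharp.

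\emph{Lower bound.} Let $d(x)=\dist(x,\partial\Omega)$, so $0\leq d\leq r_\Omega$ and $|\nabla d|=1$ a.e. Since $\Omega$ is convex, $d$ is concave, hence $\Delta d\leq 0$ in the sense of distributions. Set $L=2r_\Omega$ and $\mu=(\pi_p/L)^p$. Let $\phi$ be the first Dirichlet eigenfunction of the one-dimensional $p$-Laplacian on $(0,L)$, i.e.
\[
-(|\phi'|^{p-2}\phi')'=\mu\,\phi^{p-1},\qquad \phi(0)=\phi(L)=0 .
\]
It is symmetric about $L/2=r_\Omega$, so $\phi'\geq0$ on $[0,r_\Omega]$. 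Define $w=\phi\circ d$. Formally,
\[
-\Delta_p w=-(|\phi'|^{p-2}\phi')'(d)-|\phi'(d)|^{p-2}\phi'(d)\,\Delta d\;\geq\;\mu\,w^{p-1}\quad\text{in }\Omega,
\]
because the second term is nonnegative. We will justify this weakly by testing against nonnegative $\varphi\in C_c^\infty(\Omega)$ and using that $\psi:=|\phi'|^{p-2}\phi'$ is nondecreasing in reverse; concretely, $\psi\circ d$ has distributional divergence $\psi'(d)+\psi(d)\Delta d$ with $\psi\geq0$ and $\Delta d$ a nonpositive measure. A mollification of $d$ by concave approximations is the natural tool here. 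Thus $w>0$ is a weak supersolution with eigenvalue $\mu$. The Allegretto--Huang (Picone) identity then gives, for $u\in C_c^\infty(\Omega)$,
\[
\int_\Omega|\nabla u|^p\,dx\geq\int_\Omega|\nabla w|^{p-2}\nabla w\cdot\nabla\!\left(\frac{|u|^p}{w^{p-1}}\right)dx\geq\mu\int_\Omega|u|^p\,dx .
\]
By density, $\lambda_1(\Omega,p)\geq(\pi_p/(2r_\Omega))^p$.

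The main obstacle is the rigorous supersolution step. One issue is the regularity of $d$ on the ridge set, where $d$ is not differentiable; there we must use concavity of $d$ and the sign $\phi'\geq0$ on $[0,r_\Omega]$, which holds precisely because $d\leq r_\Omega=L/2$. The other is that $w$ vanishes on $\partial\Omega$, so the test function $|u|^p/w^{p-1}$ needs care; working with compactly supported $u$ handles this.

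\emph{Sharpness of the lower constant.} We plan to test with $\Omega_k=(-k,k)^{n-1}\times(0,2r)$. For these domains $r_{\Omega_k}=r$ once $k\geq r$, and separation of variables shows $\lambda_1(\Omega_k,p)\to(\pi_p/(2r))^p$ as $k\to\infty$. Concretely, we will use the product trial functions $\phi(x_n)\eta_k(x')$ with cutoffs $\eta_k$ whose gradients are small.
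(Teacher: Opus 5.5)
Your proposal is correct. The paper does not prove this statement itself but imports it from Brasco, and your argument is the standard proof of that result, namely the $p$-Laplacian version of the Hersch--Protter method: domain monotonicity plus scaling for the upper bound, the supersolution $w=\phi\circ d_\Omega$ (one-dimensional eigenfunction on $(0,2r_\Omega)$, with $\phi'\ge 0$ on $[0,r_\Omega]$ and $\Delta d_\Omega\le 0$) combined with Picone's identity for the lower bound, and balls and flattening slabs for sharpness. One small fix: for $p\neq 2$ there is no literal separation of variables, so get $\lambda_1(\Omega_k,p)\to(\pi_p/(2r))^p$ from two pieces. The upper side comes from your product trial functions, using dominated convergence after rescaling $x'=ky'$. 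The lower side is the inequality you have just proved.
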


\section{Main results}\label{sec main results}

In this section, we state the gradient stability inequality for the $L^{p}$-Poincar\'e inequality with exponent $\max\{2,p\}$. We also give explicit bounds for the stability constants and derive a fundamental gap estimate together with its geometric implication in terms of the inradius and diameter.

\begin{thm}\label{thm gradient stability}
Let $1<p<\infty$, $n\geq 2$ and $\Omega\in\mathcal{C}_{n}$. Let $\rho>p$ be the unique solution of
\begin{align}\label{eq rho}
\rho^{p-1}(\rho-p)-\lambda_{1}(\Omega,p)
\left(\frac{\mathrm{diam}(\Omega)}{\pi_{p}}\right)^{p}=0.
\end{align}
Then, for every $0\neq u\in W_{0}^{1,p}(\Omega)$, we have
\begin{align}\label{eq gradient stability unified}
1-\lambda_{1}(\Omega,p)\frac{\norm{u}_{L^{p}(\Omega)}^{p}}{\norm{\nabla u}_{L^{p}(\Omega)}^{p}}
\geq C_{\Omega,p}
\inf_{c\in\mathbb{R}}\left(\frac{\norm{\nabla(u-cu_{1})}_{L^{p}(\Omega)}}{\norm{\nabla u}_{L^{p}(\Omega)}}\right)^{\alpha},
\end{align}
where $\alpha:=\max\{2,p\}$ and
\begin{align}\label{eq stability constant}
C_{\Omega,p}:=\frac{1}{(1+\rho)^{\max\{2,p\}}}\times
\begin{cases}
c_{\mathrm{deg}}(p),&p\geq 2,\\[1ex]
\underline{c}_{\mathrm{sing}}(p)\left(\dfrac{p-1}{2p-1}\right)^{2-p},&1<p<2.
\end{cases}
\end{align}
Moreover, $\alpha$ is optimal.
\end{thm}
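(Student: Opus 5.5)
We follow the strategy of the introduction. By homogeneity of both sides of \eqref{eq gradient stability unified} and by density, it suffices to treat $u\in W_0^{1,p}(\Omega)$, $u\neq0$. Choose $c$ minimizing $c\mapsto\int_\Omega|u-cu_1|^pdx$; the Euler--Lagrange equation gives $\int_\Omega|u/u_1-c|^{p-2}(u/u_1-c)u_1^pdx=0$. This is the weighted mean condition \eqref{eq weighted mean condition} for $f:=u/u_1-c$ with weight $\omega=u_1^p$, which is log-concave by Sakaguchi. Theorem \ref{thm log-convex stab} with \eqref{eq weighted poincare coefficient} then yields
\[
\norm{u_1f}_{L^p}\le\frac{\mathrm{diam}(\Omega)}{\pi_p}\norm{u_1\nabla f}_{L^p}.
\]
Here we used that the infimum over $t$ is attained at $t=0$: the weighted mean condition is exactly the first-order condition for $t\mapsto\int|f-t|^p\omega$, which is convex. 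Regularity issues with $f$ being merely locally Lipschitz are handled by approximation.

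Next we test \eqref{eq weak eigenfunction} for $u_1$ with $\varphi=u_1|f|^p$, after a truncation or approximation argument to make it admissible. This gives
\[
\int|\nabla u_1|^p|f|^p+p\int|\nabla u_1|^{p-2}\langle\nabla u_1,\nabla f\rangle u_1|f|^{p-2}f=\lambda_1\int u_1^p|f|^p.
\]
Applying H\"older to the middle term and writing $A=\norm{f\nabla u_1}_p$, $B=\norm{u_1\nabla f}_p$, we get $A^p\le \lambda_1(\mathrm{diam}/\pi_p)^pB^p+pA^{p-1}B$. With $r=A/B$ this reads $r^{p-1}(r-p)\le\lambda_1(\mathrm{diam}/\pi_p)^p$. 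Since $r\mapsto r^{p-1}(r-p)$ is negative on $(0,p)$ and strictly increasing on $[p,\infty)$, we obtain $r\le\rho$. The same reasoning shows \eqref{eq rho} has a unique root $\rho>p$, since the right-hand side is positive. From $\nabla(u-cu_1)=u_1\nabla f+f\nabla u_1$ and Minkowski we conclude
\[
\norm{\nabla(u-cu_1)}_p\le(1+\rho)B.
\]

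For $p\ge2$, Theorem \ref{thm yz26} gives $\norm{\nabla u}_p^p\,\delta_{\Omega,p}(u)\ge c_{\mathrm{deg}}B^p\ge c_{\mathrm{deg}}(1+\rho)^{-p}\norm{\nabla(u-cu_1)}_p^p$. Dividing by $\norm{\nabla u}_p^p$ gives the claim.

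For $1<p<2$ we use Theorem \ref{thm poinc 1p2} and reverse H\"older with exponents $2/p$ and $2/(2-p)$:
\[
B^p=\int (W^{p-2}|u_1\nabla f|^2)^{p/2}W^{p(2-p)/2}\le\Big(\int W^{p-2}|u_1\nabla f|^2\Big)^{p/2}\Big(\int W^p\Big)^{(2-p)/2},
\]
where $W=|\nabla u|+|(u/u_1)\nabla u_1|$. The main obstacle is bounding $\norm{W}_p$ by a multiple of $\norm{\nabla u}_p$ with the constant $(2p-1)/(p-1)$. The difficulty is $\norm{(u/u_1)\nabla u_1}_p$, since $u/u_1$ is not the mean-zero $f$. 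We handle it with the same testing trick applied to $g=u/u_1$ (no mean condition is needed). Testing with $u_1|g|^p$ gives $\norm{g\nabla u_1}_p^p\le\lambda_1\norm{u}_p^p+p\norm{g\nabla u_1}_p^{p-1}\norm{u_1\nabla g}_p$. Combining this with $u_1\nabla g=\nabla u-g\nabla u_1$, with $\lambda_1\norm{u}_p^p\le\norm{\nabla u}_p^p$, and with an elementary optimization should yield $\norm{g\nabla u_1}_p\le \frac{p}{p-1}\norm{\nabla u}_p$. We expect the bookkeeping here to be delicate, so this bound should be checked carefully. Granting it, $\norm{W}_p\le\frac{2p-1}{p-1}\norm{\nabla u}_p$, and hence
\[
\norm{\nabla u}_p^p\delta\ge \underline c_{\mathrm{sing}}B^2\Big(\tfrac{p-1}{2p-1}\Big)^{2-p}\norm{\nabla u}_p^{p-2}.
\]
Using $B\ge(1+\rho)^{-1}\norm{\nabla(u-cu_1)}_p$ and dividing gives the claim with exponent $2$. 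Finally, the infimum over $c$ on the right of \eqref{eq gradient stability unified} is at most the value at our chosen $c$.

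For optimality of $\alpha$, take $u=u_1+\varepsilon\varphi$ with $\varphi\in C_0^\infty$ not proportional to $u_1$. Expanding $|\nabla u|^p$ with $C_p(\cdot,\cdot)$ and using the eigenfunction equation, the first-order term cancels. Where $\nabla u_1\neq0$ the deficit is then $O(\varepsilon^2)$, while the gradient distance is of order $\varepsilon$; this shows $\alpha\ge2$. For $p>2$, choose $\varphi$ supported near the maximum point $x_0$ of $u_1$, where $\nabla u_1(x_0)=0$. Concentrating the support at scale tied to $\varepsilon$ makes the $|\nabla u_1|^{p-2}|\nabla\varphi|^2$ contribution negligible against $\varepsilon^p|\nabla\varphi|^p$, forcing $\alpha\ge p$.
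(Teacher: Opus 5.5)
\paragraph{The gap: the bound on $\|W\|_p$.} Most of your outline matches the paper's proof: the choice of $c$, the weighted Poincar\'e inequality with weight $u_1^p$, testing with $u_1|f|^p$ to get $\|f\nabla u_1\|_p\le\rho\|u_1\nabla f\|_p$, Theorem~\ref{thm yz26} for $p\ge2$, and the reverse H\"older step for $1<p<2$. The genuine gap is the bound $\|(u/u_1)\nabla u_1\|_p\le\frac{p}{p-1}\|\nabla u\|_p$. You leave it unproved, and the route you sketch for it cannot work.

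Write $g=u/u_1$, $A=\|g\nabla u_1\|_p$ and $N=\|\nabla u\|_p$. Your starting point is $A^p\le\lambda_1\|u\|_p^p+pA^{p-1}\|u_1\nabla g\|_p$, which is obtained after H\"older has already been applied. Inserting $\|u_1\nabla g\|_p\le N+A$ and $\lambda_1\|u\|_p^p\le N^p$ gives
\[
A^p\le N^p+pNA^{p-1}+pA^p .
\]
This holds for every $A\ge0$ because $p>1$. So no elementary optimization can extract a bound on $A$ from it. The sign information is lost as soon as H\"older is applied to the $\nabla g$ term.

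\paragraph{The fix.} Substitute $u_1\nabla g=\nabla u-g\nabla u_1$ \emph{inside} the tested identity, before estimating anything. Equivalently, test directly with $\varphi=|u|^p/u_1^{p-1}$, which is what the paper does. The cross term then produces exactly $-pA^p$, and the identity becomes
\[
(p-1)A^p=p\int_\Omega|g|^{p-2}g\,|\nabla u_1|^{p-2}\langle\nabla u_1,\nabla u\rangle\,dx-\lambda_1\|u\|_p^p .
\]
Now $\lambda_1\|u\|_p^p$ enters with a favourable sign and is simply dropped; you do not need $\lambda_1\|u\|_p^p\le N^p$. H\"older then gives $(p-1)A^p\le pA^{p-1}N$, that is, $A\le\frac{p}{p-1}N$. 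With this in hand, your bound $\|W\|_p\le\frac{2p-1}{p-1}N$ and the rest of the singular case go through as you wrote them.

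\paragraph{Optimality for $p>2$.} Your argument concentrates at a critical point of $u_1$, which differs from the paper's. The paper concentrates $h_\varepsilon=\varepsilon\varphi((x-x_0)/\varepsilon)$ at any point. It uses only that $C_p(\nabla u_1+\nabla h_\varepsilon,\nabla h_\varepsilon)$ is bounded on a set of measure $O(\varepsilon^n)$, so both the deficit and $d(u_\varepsilon)^p$ are of order $\varepsilon^n$. Your version also works, but you must add the uniform lower bound
\[
d(u_1+h)\ge(1+2^{1/p})^{-1}\|\nabla h\|_p
\]
for $h$ supported in a small ball $B$. This comes from $\|\nabla u_1\|_{L^p(\Omega\setminus B)}\ge2^{-1/p}\|\nabla u_1\|_{L^p(\Omega)}$, and your sketch does not address it.
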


\begin{rem}\label{rem explicit stability}
Under the assumptions of Theorem \ref{thm gradient stability}, we can obtain more explicit bounds for the stability constants. Since $0<\rho-p<\rho$, the defining equation (\ref{eq rho}) gives
\begin{align}
(\rho-p)^{p}
&=(\rho-p)^{p-1}(\rho-p)\notag\\
&<\rho^{p-1}(\rho-p)\notag\\
&=\lambda_{1}(\Omega,p)
\left(\frac{\mathrm{diam}(\Omega)}{\pi_{p}}\right)^{p}.
\label{eq rho power bound}
\end{align}
Taking the $p$-th root in (\ref{eq rho power bound}) and adding $p$ gives
\begin{align}\label{eq rho upper bound}
\rho<p+\lambda_{1}(\Omega,p)^{1/p}\frac{\mathrm{diam}(\Omega)}{\pi_{p}}.
\end{align}
Substituting (\ref{eq rho upper bound}) into the definition (\ref{eq stability constant}), we obtain
\begin{align}\label{eq stability constant lower bound}
C_{\Omega,p}>\frac{1}{
\left(p+1+\lambda_{1}(\Omega,p)^{1/p}\frac{\mathrm{diam}(\Omega)}{\pi_{p}}\right)^{\max\{2,p\}}}\times
\begin{cases}
c_{\mathrm{deg}}(p),&p\geq 2,\\[1ex]
\underline{c}_{\mathrm{sing}}(p)\left(\dfrac{p-1}{2p-1}\right)^{2-p},&1<p<2.
\end{cases}
\end{align}
The lower bounds (\ref{eq stability constant lower bound}) give more explicit constants in (\ref{eq gradient stability unified}).
\end{rem}

We next explain how the singular case of Theorem \ref{thm gradient stability} yields the conjectured inequality (\ref{eq:8.1}) of Fleckinger-Pell\'e and Tak\'a\v{c} on open bounded convex domains. Let $1<p<2$ and $\Omega\in\mathcal{C}_{n}$, normalize $u_{1}$ by $\norm{u_{1}}_{L^{p}(\Omega)}=1$ and write $p'=p/(p-1)$. The first eigenfunction is bounded, so $u_{1}\in L^{p'}(\Omega)$. Hence the coefficient $u^{\parallel}$ in (\ref{eq projection}) is well-defined for every $u\in W_{0}^{1,p}(\Omega)$. For every $c\in\mathbb{R}$, H\"older's and $L^{p}$-Poincar\'e inequalities give
\begin{align}
|u^{\parallel}-c|
&=\frac{\left|\int_{\Omega}(u-cu_{1})u_{1}dx\right|}{\norm{u_{1}}_{L^{2}(\Omega)}^{2}}\notag\\
&\leq\frac{\norm{u_{1}}_{L^{p'}(\Omega)}}{\norm{u_{1}}_{L^{2}(\Omega)}^{2}}
\norm{u-cu_{1}}_{L^{p}(\Omega)}\notag\\
&\leq\frac{\norm{u_{1}}_{L^{p'}(\Omega)}}{\lambda_{1}(\Omega,p)^{1/p}\norm{u_{1}}_{L^{2}(\Omega)}^{2}}
\norm{\nabla(u-cu_{1})}_{L^{p}(\Omega)}.\label{eq projection coefficient bound}
\end{align}
Since $u^{\perp}=(u-cu_{1})-(u^{\parallel}-c)u_{1}$ and $\norm{\nabla u_{1}}_{L^{p}(\Omega)}=\lambda_{1}(\Omega,p)^{1/p}$, the triangle inequality together with (\ref{eq projection coefficient bound}) therefore yields
\begin{align}
\norm{\nabla u^{\perp}}_{L^{p}(\Omega)}
&\leq\norm{\nabla(u-cu_{1})}_{L^{p}(\Omega)}
+|u^{\parallel}-c|\norm{\nabla u_{1}}_{L^{p}(\Omega)}\notag\\
&\leq\left(1+\frac{\norm{u_{1}}_{L^{p'}(\Omega)}}{\norm{u_{1}}_{L^{2}(\Omega)}^{2}}\right)
\norm{\nabla(u-cu_{1})}_{L^{p}(\Omega)}.
\label{eq projected gradient comparison}
\end{align}
Taking the infimum over $c\in\mathbb{R}$ gives the required comparison between the projected gradient and the distance to $\mathbb{R}u_{1}$. Using (\ref{eq projected gradient comparison}) in (\ref{eq gradient stability unified}) gives the conjectured estimate with $\norm{\nabla u}_{L^{p}(\Omega)}^{-(2-p)}$ on the right-hand side. 

\begin{thm}\label{thm takac conjecture}
Let $1<p<2$, $n\geq2$ and $\Omega\in\mathcal{C}_{n}$. Let $u_{1}$ be the positive first eigenfunction normalized by $\norm{u_{1}}_{L^{p}(\Omega)}=1$ and let $p'=p/(p-1)$. Then, for every $0\neq u\in W_{0}^{1,p}(\Omega)$, we have
\begin{multline*}
\int_{\Omega}|\nabla u|^{p}dx-\lambda_{1}(\Omega,p)\int_{\Omega}|u|^{p}dx\\
\geq C_{\Omega,p}
\left(\frac{\norm{u_{1}}_{L^{2}(\Omega)}^{2}}
{\norm{u_{1}}_{L^{2}(\Omega)}^{2}+\norm{u_{1}}_{L^{p'}(\Omega)}}\right)^{2}
\norm{\nabla u}_{L^{p}(\Omega)}^{-(2-p)}
\norm{\nabla u^{\perp}}_{L^{p}(\Omega)}^{2},
\end{multline*}
where $C_{\Omega,p}$ is defined in (\ref{eq stability constant}).
\end{thm}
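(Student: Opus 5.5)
The plan is to derive the estimate directly from Theorem~\ref{thm gradient stability} in the singular case $\alpha=2$, combined with the comparison \eqref{eq projected gradient comparison} from the discussion preceding the statement.

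\emph{Step 1: multiply out the normalized deficit.} Fix $0\neq u\in W_{0}^{1,p}(\Omega)$. Multiplying \eqref{eq gradient stability unified} by $\norm{\nabla u}_{L^{p}(\Omega)}^{p}$ gives
\begin{align*}
\int_{\Omega}|\nabla u|^{p}dx-\lambda_{1}(\Omega,p)\int_{\Omega}|u|^{p}dx
\geq C_{\Omega,p}\,\norm{\nabla u}_{L^{p}(\Omega)}^{p-2}\inf_{c\in\mathbb{R}}\norm{\nabla(u-cu_{1})}_{L^{p}(\Omega)}^{2},
\end{align*}
and $p-2=-(2-p)$, which is the power appearing in the statement.

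\emph{Step 2: compare with the projected gradient.} Write $A:=\norm{u_{1}}_{L^{2}(\Omega)}^{2}$ and $B:=\norm{u_{1}}_{L^{p'}(\Omega)}$. Estimate \eqref{eq projected gradient comparison} holds for every $c\in\mathbb{R}$. Taking the infimum over $c$, it gives
\begin{align*}
\inf_{c\in\mathbb{R}}\norm{\nabla(u-cu_{1})}_{L^{p}(\Omega)}\geq \frac{A}{A+B}\norm{\nabla u^{\perp}}_{L^{p}(\Omega)}.
\end{align*}
Squaring this and inserting it into Step~1 yields exactly the claimed constant $C_{\Omega,p}\bigl(A/(A+B)\bigr)^{2}$.

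\emph{Step 3: verify the ingredients of \eqref{eq projected gradient comparison}.} This is the only place needing care. First, $u_{1}\in L^{\infty}(\Omega)$, so $A$ and $B$ are finite and positive. As a result, $u^{\parallel}$ is well defined by Hölder's inequality, since $u\in L^{p}$ and $u_{1}\in L^{p'}$.

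Next, in the chain \eqref{eq projection coefficient bound}, the Poincaré inequality \eqref{Poincare} with the best constant gives $\norm{u-cu_{1}}_{L^{p}}\leq\lambda_{1}^{-1/p}\norm{\nabla(u-cu_{1})}_{L^{p}}$. This factor $\lambda_{1}^{-1/p}$ cancels against $\norm{\nabla u_{1}}_{L^{p}}=\lambda_{1}^{1/p}$, which uses the normalization $\norm{u_{1}}_{L^{p}}=1$. That is why the final constant involves only $A$ and $B$.

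Finally, the triangle inequality is applied to $u^{\perp}=(u-cu_{1})-(u^{\parallel}-c)u_{1}$.

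I do not expect a genuine obstacle. The main point to check is the direction of the inequalities when inverting \eqref{eq projected gradient comparison}. The other point is the degenerate case $\inf_{c}\norm{\nabla(u-cu_{1})}_{L^{p}}=0$, i.e.\ $u\in\mathbb{R}u_{1}$. There $u^{\perp}=0$ as well, so both sides vanish and the inequality is trivial.
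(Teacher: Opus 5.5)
Your proposal is correct and matches the paper's argument: it multiplies the singular case of \eqref{eq gradient stability unified} by $\norm{\nabla u}_{L^{p}(\Omega)}^{p}$ and inserts \eqref{eq projected gradient comparison}, which comes from H\"older's inequality, the sharp Poincar\'e inequality and $\norm{\nabla u_{1}}_{L^{p}(\Omega)}=\lambda_{1}(\Omega,p)^{1/p}$, after taking the infimum over $c$. Your separate treatment of $u\in\mathbb{R}u_{1}$ is harmless but unnecessary, since the inverted comparison holds for every $u$.
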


One can obtain the full statement of the conjecture (\ref{eq:8.1}) without assuming that $\Omega$ is convex at the cost of losing the explicit geometric formula for the constant.

\begin{thm}\label{thm takac conjecture general}
Let $1<p<2$, $n\geq2$ and let $\Omega\subset\mathbb{R}^{n}$ be a bounded domain with $C^{2}$ boundary. Let $\norm{u_{1}}_{L^{p}(\Omega)}=1$. Then there exists a constant $c>0$, depending on $\Omega$ and $p$, such that, for every $u\in W_{0}^{1,p}(\Omega)$, we have
\begin{align*}
\int_{\Omega}|\nabla u|^{p}dx-\lambda_{1}(\Omega,p)\int_{\Omega}|u|^{p}dx\geq c\norm{\nabla u}_{L^{p}(\Omega)}^{-(2-p)}
\norm{\nabla u^{\perp}}_{L^{p}(\Omega)}^{2}.
\end{align*}
\end{thm}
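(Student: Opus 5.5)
The plan is to rerun the proof of Theorem~\ref{thm gradient stability} in the singular range, keeping only what holds on a general bounded $C^2$ domain, and then to transfer the result to $u^\perp$ through the projection estimate (\ref{eq projected gradient comparison}). The only place convexity entered was the weighted Poincar\'e inequality of Theorem~\ref{thm log-convex stab}, used with the weight $u_1^p$ to control $\norm{u_1 f}_{L^p(\Omega)}$ by $\norm{u_1\nabla f}_{L^p(\Omega)}$, where $f=u/u_1-c$. So the key replacement I need is a weighted Poincar\'e inequality
\begin{align*}
\int_\Omega |f|^p u_1^p\,dx\le K\int_\Omega |\nabla f|^p u_1^p\,dx \quad\text{whenever } \int_\Omega |f|^{p-2}f\,u_1^p\,dx=0,
\end{align*}
with some non-explicit constant $K=K(\Omega,p)$.

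To prove this I would use the $C^2$ boundary. By the Hopf lemma and $C^{1,\beta}$ regularity of $u_1$ up to the boundary, $u_1\simeq d(x):=\dist(x,\partial\Omega)$. The weight $d^p$ is then an $A_q$-type power weight near a $C^2$ boundary, and $W^{1,p}(\Omega;d^p)$ embeds compactly into $L^p(\Omega;d^p)$, for instance via a weighted Hardy inequality in boundary-flattening coordinates. A standard contradiction argument then gives the inequality. Take a normalized sequence with gradient norms tending to zero, extract a limit that must be constant (since $\Omega$ is connected), and observe that the mean condition forces this constant to be zero, which contradicts the normalization. With $K$ in hand, the testing argument (\ref{eq strategy testing bound}) goes through verbatim with $\varphi=u_1|f|^p$. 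I would justify this test function by approximation, using $u_1\in C^{1,\beta}(\overline\Omega)$ and the fact that $f\nabla u_1$ and $u_1\nabla f$ lie in $L^p$. The result is $\norm{f\nabla u_1}_p\le\rho\norm{u_1\nabla f}_p$, with $\rho$ solving $\rho^{p-1}(\rho-p)=\lambda_1K$, and hence (\ref{eq strategy gradient conversion}).

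Next I would combine this with Theorem~\ref{thm poinc 1p2}, exactly as in the singular case of Theorem~\ref{thm gradient stability}. Reverse H\"older with exponents $p/2$ and $p/(p-2)<0$ gives
\begin{align*}
\int_\Omega \Big(|\nabla u|+\Big|\frac{u}{u_1}\nabla u_1\Big|\Big)^{p-2}|u_1\nabla f|^2\,dx \ge \norm{u_1\nabla f}_p^2\,\Big\|\,|\nabla u|+\Big|\frac{u}{u_1}\nabla u_1\Big|\,\Big\|_p^{p-2}.
\end{align*}
I then need to bound $\norm{(u/u_1)\nabla u_1}_p\lesssim\norm{\nabla u}_p$. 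This follows from the same testing argument applied with $c=0$, combined with the Hardy-type bound $\norm{u}_p\lesssim\norm{\nabla u}_p$; it is the source of the factor $((p-1)/(2p-1))^{2-p}$ in the convex case, and here it only changes the constants. Putting these together yields
\begin{align*}
\int_\Omega|\nabla u|^p-\lambda_1\int_\Omega|u|^p\ge c\,\norm{\nabla u}_p^{p-2}\inf_c\norm{\nabla(u-cu_1)}_p^2 .
\end{align*}

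Finally, (\ref{eq projected gradient comparison}) holds on any bounded domain, since it only uses $u_1\in L^\infty$ and the $L^p$-Poincar\'e inequality. It converts the infimum into $\norm{\nabla u^\perp}_p^2$, and the equivalence noted in the Conjecture gives the $\max\{|u^\parallel|,\norm{\nabla u^\perp}_p\}$ form. The case $u=0$ is trivial. I expect the main obstacle to be the weighted Poincar\'e inequality with the degenerate weight $u_1^p$, specifically the compactness of the embedding near $\partial\Omega$. This is where the $C^2$ hypothesis and the boundary behaviour $u_1\simeq d$ are genuinely needed.
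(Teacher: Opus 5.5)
Your proof is correct, but it is organized differently from the paper's.

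\textbf{Your route.} You keep the full architecture of the convex proof:
\begin{itemize}
\item a weighted Poincar\'e inequality for the weight $u_1^p\simeq d_\Omega^p$ under the mean condition, with constant $K$ obtained by compactness;
\item the testing argument with $\varphi=u_1|f|^p$, giving $\norm{\nabla(u-cu_1)}_{L^p(\Omega)}\le(1+\rho)\norm{u_1\nabla f}_{L^p(\Omega)}$ with $\rho^{p-1}(\rho-p)=\lambda_1K$;
\item then (\ref{interm grad 1p2});
\item and only at the end the passage to $u^\perp$ through (\ref{eq projected gradient comparison}).
\end{itemize}

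\textbf{The paper's route.} The paper bypasses both the testing argument and the intermediate distance $\inf_c\norm{\nabla(u-cu_1)}_{L^p(\Omega)}$. It applies the improved Poincar\'e inequality (\ref{eq general domain improved poincare}) for John domains to $v=u/u_1$; that inequality has the \emph{unweighted} $L^p$ norm on the left. With $u_1\ge c_0d_\Omega$ this gives $\norm{v-u^\parallel}_{L^p(\Omega)}\le C\norm{u_1\nabla v}_{L^p(\Omega)}$. The term $(v-u^\parallel)\nabla u_1$ in $\nabla u^\perp$ is then bounded crudely by $\norm{\nabla u_1}_{L^\infty(\Omega)}$. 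This lands directly on $\norm{\nabla u^\perp}_{L^p(\Omega)}\le B\norm{u_1\nabla(u/u_1)}_{L^p(\Omega)}$.

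\textbf{Comparison.} The paper's route is shorter and uses an off-the-shelf inequality instead of a compactness argument. Yours keeps the proof uniform with the convex case. The only new domain-dependent input is $K$, and the term $f\nabla u_1$ is controlled through the eigenfunction equation rather than through $\norm{\nabla u_1}_{L^\infty(\Omega)}$. With $K=(\mathrm{diam}(\Omega)/\pi_p)^p$ it reproduces Theorem \ref{thm takac conjecture} exactly.

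\textbf{Your main obstacle.} The step you flag as the main obstacle is settled by the inequality the paper cites. The mean condition makes $a=0$ the minimizer of $a\mapsto\int_\Omega|f-a|^pu_1^p\,dx$. Hence $\int_\Omega|f|^pu_1^p\,dx\le\norm{u_1}_{L^\infty(\Omega)}^p\inf_a\norm{f-a}_{L^p(\Omega)}^p\le\norm{u_1}_{L^\infty(\Omega)}^pC^pc_0^{-p}\norm{u_1\nabla f}_{L^p(\Omega)}^p$. So no compact embedding is needed, although your Hardy-in-normal-coordinates sketch also works.

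\textbf{Two small corrections.}
\begin{enumerate}
\item The bound (\ref{bound grad u1 ratio}) needs no Poincar\'e or Hardy input. It holds with the same constant $p/(p-1)$ on any bounded domain, so the factor $((p-1)/(2p-1))^{2-p}$ does not change at all. You must expand $\nabla(|u|^p/u_1^{p-1})$ in terms of $\nabla u$ rather than $u_1\nabla f$, since the latter is circular. The $\lambda_1$-term then has a favourable sign and is simply dropped.
\item The argument is carried out for $u\in C_0^\infty(\Omega)$. It must then be extended by density, using continuity of $u\mapsto u^\perp$ on $W_0^{1,p}(\Omega)$.
\end{enumerate}
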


The lower stability estimates above are complemented by an upper bound in the singular case. Unlike the lower bound in Theorem \ref{thm gradient stability}, this estimate does not require convexity and has the optimal exponent $p$.

\begin{thm}\label{thm upper gradient stability}
Let $1<p<2$, $n\geq2$ and let $\Omega\subset\mathbb{R}^{n}$ be an open bounded domain. Then, for every $0\neq u\in W_{0}^{1,p}(\Omega)$, we have
\begin{align}\label{eq normalized upper gradient stability}
1-\lambda_{1}(\Omega,p)
\frac{\norm{u}_{L^{p}(\Omega)}^{p}}{\norm{\nabla u}_{L^{p}(\Omega)}^{p}}\leq\overline{c}_{\mathrm{sing}}(p)
\left(\frac{2p-1}{p-1}\right)^{p}
\inf_{c\in\mathbb{R}}
\left(\frac{\norm{\nabla(u-cu_1)}_{L^{p}(\Omega)}}{\norm{\nabla u}_{L^{p}(\Omega)}}\right)^{p}.
\end{align}
Moreover, the exponent $p$ is optimal.
\end{thm}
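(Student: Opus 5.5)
The plan is to start from the pointwise upper remainder estimate of Theorem~\ref{thm poinc 1p2 upper}. We then remove the weight by an elementary pointwise inequality, and finally convert the weighted derivative $u_1\nabla(u/u_1)$ into the full gradient distance via a Picone/Hardy-type estimate obtained by testing the eigenfunction equation. This is the mirror image of the conversion \eqref{eq strategy gradient conversion}, but it runs in the opposite direction and needs no convexity.

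\emph{Step 1 (removing the weight).} Fix $c\in\mathbb{R}$ and set $v:=u-cu_1$ and $f:=v/u_1=u/u_1-c$. Then
\begin{align*}
u_1\nabla(u/u_1)=u_1\nabla f=\nabla v-f\nabla u_1=\nabla u-\frac{u}{u_1}\nabla u_1 ,
\end{align*}
so $|u_1\nabla(u/u_1)|\le |\nabla u|+|\tfrac{u}{u_1}\nabla u_1|$. Since $p-2<0$, this gives
\begin{align*}
\left(|\nabla u|+\left|\tfrac{u}{u_1}\nabla u_1\right|\right)^{p-2}|u_1\nabla(u/u_1)|^2\le |u_1\nabla f|^p .
\end{align*}
Substituting into \eqref{eq singular remainder upper preliminaries} yields
\begin{align*}
\int_\Omega|\nabla u|^p\,dx-\lambda_1\int_\Omega|u|^p\,dx\le \overline{c}_{\mathrm{sing}}(p)\norm{u_1\nabla f}_{L^p(\Omega)}^p
\end{align*}
for every $c$.

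\emph{Step 2 (the key comparison).} We aim to show
\begin{align*}
\norm{f\nabla u_1}_{L^p(\Omega)}\le \frac{p}{p-1}\norm{\nabla v}_{L^p(\Omega)}.
\end{align*}
Test \eqref{eq weak eigenfunction} for $u_1$ with $\varphi=u_1|f|^p=|v|^p/u_1^{p-1}$. Using $\nabla\varphi=|f|^p\nabla u_1+p|f|^{p-2}f\,u_1\nabla f$ and $u_1\nabla f=\nabla v-f\nabla u_1$, we get
\begin{align*}
(1-p)\int_\Omega|f\nabla u_1|^p\,dx+p\int_\Omega|\nabla u_1|^{p-2}|f|^{p-2}f\langle\nabla u_1,\nabla v\rangle\,dx=\lambda_1\int_\Omega|v|^p\,dx\ge0 .
\end{align*}
H\"older's inequality then gives $(p-1)A\le pA^{(p-1)/p}\norm{\nabla v}_{L^p}$ with $A=\norm{f\nabla u_1}_{L^p}^p$, which is the claim. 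By the triangle inequality,
\begin{align*}
\norm{u_1\nabla f}_{L^p}\le\left(1+\tfrac{p}{p-1}\right)\norm{\nabla v}_{L^p}=\tfrac{2p-1}{p-1}\norm{\nabla(u-cu_1)}_{L^p}.
\end{align*}
Dividing by $\norm{\nabla u}_{L^p}^p$ and taking the infimum over $c$ gives \eqref{eq normalized upper gradient stability}.

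The main technical obstacle is the justification of $\varphi=u_1|f|^p$ as a test function on an arbitrary bounded open set, where $u_1$ has no quantitative boundary behaviour and $v/u_1$ may be unbounded. I would first take $v\in C_0^\infty(\Omega)$, where $u_1\ge c_K>0$ on $K=\operatorname{supp}v$ and $u_1\in C^{1,\beta}_{\mathrm{loc}}$, so $\varphi\in W_0^{1,p}(\Omega)$ and the computation is rigorous. I would then pass to general $u$ by density, using $u_1+\varepsilon$ in place of $u_1$ and Fatou's lemma on the left side if needed. The target inequality involves only $\norm{\nabla v}_{L^p}$ and lower semicontinuous quantities, so it survives the limit. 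I would also check that the upper remainder theorem is applied to the same $u$ and not to $v$; this is fine, since only the weight depends on $u$ and it was discarded pointwise.

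\emph{Optimality of $p$.} To show that the exponent cannot be increased, I would take $u=u_1+w_r$, where $w_r(x)=t\,r^{1-n/p}\phi((x-x_0)/r)$ is a concentrating bump, $\phi\in C_0^\infty(B_1)$ nonzero, $x_0\in\Omega$, and $t>0$ is small but fixed relative to $r$. Then $\norm{\nabla w_r}_{L^p}\sim t$ and the gradient distance to $\mathbb{R}u_1$ is comparable to $t$. As $r\to0$ we have $|\nabla w_r|\sim t r^{-n/p}\gg|\nabla u_1|$ on most of $B_r(x_0)$. Applying the lower estimate of Theorem~\ref{thm poinc 1p2} there, the weight is comparable to $|\nabla w_r|^{p-2}$ and $u_1\nabla(u/u_1)\approx\nabla w_r$, so the deficit is bounded below by $c\int_{B_r}|\nabla w_r|^p\sim t^p$. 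Hence no bound of the form $Ct^{q}$ with $q>p$ can hold as $t\to0$.
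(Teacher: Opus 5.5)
Your proof of the inequality is the paper's. You discard the weight in Theorem~\ref{thm poinc 1p2 upper} pointwise. You then prove $\norm{(v/u_1)\nabla u_1}_{L^p(\Omega)}\le\frac{p}{p-1}\norm{\nabla v}_{L^p(\Omega)}$ by testing with $|v|^p/u_1^{p-1}$ plus density. The triangle inequality for $v=u-cu_1$ then gives the same constant $\overline{c}_{\mathrm{sing}}(p)\bigl(\frac{2p-1}{p-1}\bigr)^p$.

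Your optimality argument is also correct, but it takes a different route.

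\textbf{The paper's route.} It uses the fixed-slope bump $h_\varepsilon=\varepsilon\varphi((x-x_0)/\varepsilon)$, whose gradient stays bounded. It works from the exact deficit identity \eqref{eq optimality deficit identity}. The gradient $C_p$-term is bounded below by $c|\nabla h_\varepsilon|^2$ and the scalar term above by $C|h_\varepsilon|^2$, both via the pointwise Chen--Tang estimates. This yields $\delta\gtrsim\varepsilon^n$ against a normalized distance $\lesssim\varepsilon^{n/p}$.

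\textbf{Your route.} You take a $\dot W^{1,p}$-invariant bump of amplitude $t$ and concentrate it until $|\nabla w_r|\gg|\nabla u_1|$. The weight in Theorem~\ref{thm poinc 1p2} then behaves like $|\nabla w_r|^{p-2}$, so the remainder becomes $p$-homogeneous, giving $\delta\gtrsim t^p$ against a distance $\lesssim t$. This needs only the already stated lower remainder theorem, and it shows why the exponent is $p$: a concentrated perturbation sees only the $p$-homogeneous part of the energy.

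It is, however, a two-parameter argument, and you should make its uniformity in $t$ explicit:
\begin{enumerate}
\item For fixed $t$, restrict the integral to $\{|\nabla\phi((x-x_0)/r)|\ge\eta\}$.
\item There, $|w_r/u_1|\,|\nabla u_1|=O(tr^{1-n/p})$ is only an $O(r)$ fraction of $|\nabla w_r|$. The bound $M$ for $|\nabla u_1|$ near $x_0$ becomes negligible once $r\le r_0(t)$.
\item For $t\le1$ and $r$ small, $\norm{\nabla u}_{L^p(\Omega)}$ lies between $\frac12\norm{\nabla u_1}_{L^p(\Omega)}$ and $\norm{\nabla u_1}_{L^p(\Omega)}+t\norm{\nabla\phi}_{L^p}$. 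This lets you pass to the normalized quantities with constants independent of $t$.
\end{enumerate}
Only $d(u)\le t\norm{\nabla\phi}_{L^p}$ and $d(u)>0$ are needed on the distance side. The paper's choice keeps every quantity bounded and so avoids these checks.
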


If we additionally assume that $\Omega\in\mathcal{C}_{n}$ in Theorem \ref{thm upper gradient stability}, we can combine it with Theorem \ref{thm gradient stability} to obtain the following two-sided estimate for the normalized Poincar\'e deficit.

\begin{cor}
Let $1<p<2$, $n\geq2$ and $\Omega\in\mathcal{C}_{n}$. For every $0\neq u\in W_{0}^{1,p}(\Omega)$, define the normalized Poincar\'e deficit by
\begin{align*}
\delta_{\Omega,p}(u):=1-\lambda_{1}(\Omega,p)
\frac{\norm{u}_{L^{p}(\Omega)}^{p}}{\norm{\nabla u}_{L^{p}(\Omega)}^{p}}.
\end{align*}
Then,
\begin{multline*}
\frac{\underline{c}_{\mathrm{sing}}(p)}{(1+\rho)^{\max\{2,p\}}}\left(\dfrac{p-1}{2p-1}\right)^{2-p}\inf_{c\in\mathbb{R}}
\left(\frac{\norm{\nabla(u-cu_{1})}_{L^{p}(\Omega)}}{\norm{\nabla u}_{L^{p}(\Omega)}}\right)^{2}
\leq\delta_{\Omega,p}(u)\\
\leq\overline{c}_{\mathrm{sing}}(p)
\left(\frac{2p-1}{p-1}\right)^{p}
\inf_{c\in\mathbb{R}}
\left(\frac{\norm{\nabla(u-cu_{1})}_{L^{p}(\Omega)}}{\norm{\nabla u}_{L^{p}(\Omega)}}\right)^{p},
\end{multline*}
Moreover, the exponents $2$ in the lower bound and $p$ in the upper bound are optimal.
\end{cor}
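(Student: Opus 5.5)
The lower bound is exactly Theorem \ref{thm gradient stability} in the singular range $1<p<2$, where $\alpha=\max\{2,p\}=2$ and $C_{\Omega,p}$ is the stated constant. The upper bound is Theorem \ref{thm upper gradient stability}; it holds on any open bounded domain, and in particular on $\Omega\in\mathcal{C}_{n}$. So the two-sided inequality is just these two results placed side by side, and the only work is the optimality of the exponents.

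\textbf{Optimality of the lower exponent $2$.} I would take $u_\varepsilon=u_1+\varepsilon\varphi$, with $\varphi\in C_0^\infty(\Omega)$ supported in a compact set $K\Subset\Omega$ on which $u_1>0$ and $|\nabla u_1|\ge c_K>0$. Such a $K$ exists because $u_1\in C^{1,\beta}$ and $\nabla u_1$ does not vanish identically on any open set. I would choose $\varphi$ not proportional to $u_1$. On $K$ the integrand $|\nabla u_1+\varepsilon\nabla\varphi|^p$ is smooth in $\varepsilon$, since we stay away from critical points of $u_1$. The first variation of $\int|\nabla u|^p-\lambda_1\int|u|^p$ vanishes by the weak equation \eqref{eq weak eigenfunction}, so the unnormalized deficit is $O(\varepsilon^2)$. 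Dividing by $\norm{\nabla u_\varepsilon}_{L^p}^p\to\lambda_1$ gives $\delta_{\Omega,p}(u_\varepsilon)=O(\varepsilon^2)$. On the other side, $\inf_c\norm{\nabla(u_\varepsilon-cu_1)}_{L^p}=\varepsilon\inf_{c'}\norm{\nabla(\varphi-c'u_1)}_{L^p}\asymp\varepsilon$. Hence no exponent larger than $2$ can appear in the lower bound.

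\textbf{Optimality of the upper exponent $p$.} Here I would reuse the construction from the optimality part of Theorem \ref{thm upper gradient stability}, adapted to the convex domain $\Omega$. One perturbs $u_1$ near a point $x_0$ where $\nabla u_1(x_0)=0$; for convex $\Omega$ the maximum point of $u_1$ is such a point. Take $u_\varepsilon=u_1+\varepsilon\psi((x-x_0)/r_\varepsilon)$ with the scale $r_\varepsilon$ tuned so that $|\nabla u_1|\ll\varepsilon/r_\varepsilon$ on the support. Locally the perturbation then dominates, and the deficit behaves like the $p$-homogeneous term $\int|\varepsilon\nabla\psi_\varepsilon|^p$, which is comparable to the $p$-th power of the gradient distance. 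The deficit is therefore $\gtrsim(\text{distance})^p$, so no exponent smaller than $p$ can work in the upper bound.

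\textbf{Main obstacle.} The delicate step is this last bubble computation. One must check that the $C_p$-remainder at the degenerate point is genuinely of order $\varepsilon^p r_\varepsilon^{n-p}$ and is not cancelled, and that the infimum over $c$ is not much smaller than the $\psi$-part. For the first, I would Taylor-expand $u_1$ near $x_0$ using $|\nabla u_1(x)|\lesssim|x-x_0|^{\beta}$. For the second, I would note that $\psi_\varepsilon$ is asymptotically orthogonal to $u_1$ as $r_\varepsilon\to0$.
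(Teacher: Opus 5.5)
You correctly reduce the two-sided inequality to Theorem~\ref{thm gradient stability} (where $\max\{2,p\}=2$) and Theorem~\ref{thm upper gradient stability}, and both of your test families have the right asymptotics. However, you draw both optimality conclusions in the wrong direction. Write $d(u):=\inf_{c\in\mathbb{R}}\norm{\nabla(u-cu_1)}_{L^p(\Omega)}$ and $\theta(u):=d(u)/\norm{\nabla u}_{L^p(\Omega)}$. Taking $c=0$ shows $\theta(u)\in[0,1]$. Hence a lower bound $\delta_{\Omega,p}\geq C\theta^{\beta}$ with $\beta>2$, or an upper bound $\delta_{\Omega,p}\leq C\theta^{\beta}$ with $\beta<p$, follows trivially from the stated one. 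Your sentences ``no exponent larger than $2$ can appear in the lower bound'' and ``no exponent smaller than $p$ can work in the upper bound'' are therefore false.

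Optimality means that the lower bound fails for every $\beta<2$ and the upper bound fails for every $\beta>p$, which is what the paper proves. Your first family, with $\delta_{\Omega,p}(u_\varepsilon)=O(\varepsilon^2)$ and $\theta(u_\varepsilon)\asymp\varepsilon$, rules out precisely the first. Your bubble, with $\delta_{\Omega,p}\gtrsim\theta^{p}$ as $\theta\to0$, rules out precisely the second. So the repair is to reverse both conclusions.

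The same confusion drives half of your ``main obstacle''. To exclude $\beta>p$ you need only an \emph{upper} bound on $d(u_\varepsilon)$, and $d(u_1+h)\leq\norm{\nabla h}_{L^p(\Omega)}$ is immediate from $c=1$. A smaller infimum over $c$ would only sharpen the contradiction, so no asymptotic orthogonality is required. A lower bound on $d$ is needed only for the lower exponent. There your exact formula $d(u_\varepsilon)=\varepsilon\inf_{c'}\norm{\nabla(\varphi-c'u_1)}_{L^p(\Omega)}$ supplies it and neatly replaces the paper's localization bound \eqref{eq optimality distance comparison}.

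Otherwise your lower-exponent family is the paper's: perturb $u_1$ in a ball where $0<m\leq|\nabla u_1|\leq M$. For the upper exponent, the critical point of $u_1$ and the tuning $|\nabla u_1|\ll\varepsilon/r_\varepsilon$ are unnecessary. The paper takes $h_\varepsilon=\varepsilon\varphi((x-x_0)/\varepsilon)$ in any small ball $B$, so $|\nabla h_\varepsilon|=O(1)$ on a set of measure $O(\varepsilon^n)$. Since $|\nabla u_1+\nabla h_\varepsilon|+|\nabla u_1|$ is bounded above on $B$ and $p-2<0$, the Chen--Tang bound $C_p(\xi,\eta)\geq\underline{c}_{\mathrm{sing}}(p)(|\xi|+|\xi-\eta|)^{p-2}|\eta|^{2}$ gives $C_p(\nabla u_1+\nabla h_\varepsilon,\nabla h_\varepsilon)\geq\kappa|\nabla h_\varepsilon|^{2}$ with $\kappa>0$ independent of $\varepsilon$. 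The gradient contribution to the deficit is therefore $\gtrsim\varepsilon^{n}$.

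Your sketch does not address the subtracted scalar term $\lambda_1\int_B C_p(u_1+h_\varepsilon,h_\varepsilon)\,dx$. It is $O(\varepsilon^{n+2})$, since $u_1$ is bounded below on $\overline B$. Thus $\delta_{\Omega,p}(u_\varepsilon)\gtrsim\varepsilon^{n}$ while $\theta(u_\varepsilon)\lesssim\varepsilon^{n/p}$.

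Your construction at the maximum point also works; with $r_\varepsilon=\varepsilon$ it is the same bubble. The underlying mechanism is that, in the singular range, an $O(1)$ gradient perturbation concentrated on a small set already makes the deficit $p$-homogeneous. Neither domination over $\nabla u_1$ nor a Taylor expansion at a degenerate point is needed.
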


\begin{rem}
The two-sided estimate (\ref{eq two sided gradient stability intro})
gives a global quantitative characterization of almost optimizers. Indeed, for any sequence of nonzero functions
$u_k\in W_0^{1,p}(\Omega)$, we have
\begin{align*}
\delta_{\Omega,p}(u_k)\to0
\quad\Longleftrightarrow\quad
\inf_{c\in\mathbb{R}}
\frac{\norm{\nabla(u_k-cu_1)}_{L^p(\Omega)}}
{\norm{\nabla u_k}_{L^p(\Omega)}}\to0.
\end{align*}
The optimal upper exponent $p$ agrees with the Sobolev upper
bound of Ignat and Zuazua
\cite[Corollary~3.5 and Remark~3.6]{ignat2025optimal},
whose optimal exponent is $\min\{2,p\}$.
Their estimate holds when the normalized gradient distance
to the Sobolev optimizers is sufficiently small.
Here, the Poincar\'e estimate holds globally, so it applies
to every term of a minimizing sequence, including those
far from the optimizer set.
Moreover, our upper constant is explicit and depends only
on $p$, while the lower constant has the explicit geometric
dependence given in (\ref{eq stability constant}).
\end{rem}

We now turn to the fundamental gap on bounded convex domains. Applying the gradient stability estimate (\ref{eq gradient stability unified}) with the mountain-pass characterization (\ref{eigenvalue second}) of the second eigenvalue yields the following explicit lower bound for every $1<p<\infty$.

\begin{thm}\label{thm fundamental gap}
Let $1<p<\infty$, $n\geq 2$ and $\Omega\in\mathcal{C}_{n}$. Let $C_{\Omega,p}$ be the constant in (\ref{eq stability constant}). Then,
\begin{align}\label{eq unified spectral gap}
\Gamma(\Omega,p)\geq\frac{C_{\Omega,p}}{1-C_{\Omega,p}}\lambda_{1}(\Omega,p),
\end{align}
where $0<C_{\Omega,p}<1$. Equivalently,
\begin{align*}
\Gamma(\Omega,p)\geq C_{\Omega,p}\lambda_{2}(\Omega,p).
\end{align*}
In particular, we have the explicit bound
\begin{align}
\Gamma(\Omega,p)\geq{}&\frac{\lambda_{1}(\Omega,p)}{
\left(p+1+\lambda_{1}(\Omega,p)^{1/p}\frac{\mathrm{diam}(\Omega)}{\pi_{p}}\right)^{\max\{2,p\}}}
\notag\\&\times
\begin{cases}
c_{\mathrm{deg}}(p),&p\geq2,\\[1ex]
\underline{c}_{\mathrm{sing}}(p)\left(\dfrac{p-1}{2p-1}\right)^{2-p},&1<p<2.
\end{cases}
\label{eq explicit spectral gap}
\end{align}
\end{thm}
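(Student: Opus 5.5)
We will combine the gradient stability estimate of Theorem \ref{thm gradient stability} with the mountain-pass characterization (\ref{eigenvalue second}). Fix $\varepsilon>0$ and choose a path $\gamma\in\Psi(u_{1},-u_{1})$ with $\max_{t}\norm{\nabla\gamma(t)}_{L^{p}(\Omega)}^{p}\leq\lambda_{2}(\Omega,p)+\varepsilon$; here $\norm{u_{1}}_{L^{p}(\Omega)}=1$. The aim is to find a time $t_{*}$ at which $\gamma(t_{*})$ is \emph{far} from $\mathbb{R}u_{1}$ in the normalized gradient sense. Concretely, we want
\begin{align*}
\inf_{c\in\mathbb{R}}\norm{\nabla(\gamma(t_{*})-cu_{1})}_{L^{p}(\Omega)}\geq\norm{\nabla\gamma(t_{*})}_{L^{p}(\Omega)}.
\end{align*}
Then (\ref{eq gradient stability unified}) applied to $u=\gamma(t_{*})$, which has $\norm{u}_{L^{p}}=1$, gives
\begin{align*}
1-\frac{\lambda_{1}(\Omega,p)}{\norm{\nabla\gamma(t_{*})}_{L^{p}}^{p}}\geq C_{\Omega,p}.
\end{align*}
Hence
\begin{align*}
\lambda_{2}(\Omega,p)+\varepsilon\geq\norm{\nabla\gamma(t_{*})}_{L^{p}}^{p}\geq\frac{\lambda_{1}(\Omega,p)}{1-C_{\Omega,p}}.
\end{align*}
Letting $\varepsilon\to0$ yields (\ref{eq unified spectral gap}). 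The equivalent form follows by rearranging: $\lambda_{2}(1-C_{\Omega,p})\geq\lambda_{1}$ means $\lambda_{2}-\lambda_{1}\geq C_{\Omega,p}\lambda_{2}$.

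The key step is the gradient path-intersection lemma, adapting \cite[Lemma~6.3]{chen2026fundamental}. For $u\neq0$, the map $c\mapsto\norm{\nabla(u-cu_{1})}_{L^{p}}$ is convex and coercive, so its set of minimizers is a nonempty compact interval. Consider the continuous function
\begin{align*}
h(t):=\norm{\nabla\gamma(t)}_{L^{p}}^{p}-\norm{\nabla(\gamma(t)-\tfrac12\text{-type shift})}^{p}.
\end{align*}
More cleanly, set $g(t,c):=\norm{\nabla(\gamma(t)-cu_{1})}_{L^{p}}$ and note two facts. First, $g(t,0)=\norm{\nabla\gamma(t)}$, so the infimum over $c$ is at most $\norm{\nabla\gamma(t)}$. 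Second, we need equality, i.e.\ $c=0$ must be a minimizer at some time. At $t=0$ every minimizer is $c=1>0$, and at $t=1$ every minimizer is $c=-1<0$. Let $M(t)$ be the interval of minimizers. The multivalued map $t\mapsto M(t)$ is upper semicontinuous with convex values, by continuity of $g$ and uniform coercivity on compact $t$-sets. A connectedness argument then produces $t_{*}$ with $0\in M(t_{*})$: the sets $\{t: M(t)\subset(0,\infty)\}$ and $\{t:M(t)\subset(-\infty,0)\}$ are disjoint, open, nonempty, and cannot cover $[0,1]$. At $t_{*}$ the desired equality holds, since $\gamma(t_{*})\neq0$ because its $L^{p}$ norm is $1$. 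Optionally, by Theorem \ref{thm gradient stability} one could prefer minimizing over $\norm{\gamma-cu_1}_{L^p}$ as in the source, but the gradient version is what matches (\ref{eq gradient stability unified}).

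We expect this semicontinuity and connectedness step to be the main obstacle. The danger is that $M(t)$ is not single-valued, since the $W^{1,p}$ norm is only strictly convex for $1<p<\infty$. In fact uniform convexity gives uniqueness of the minimizer $c$ unless $\nabla\gamma(t)$ is parallel to $\nabla u_{1}$, which is excluded except at multiples of $u_{1}$. So $M(t)$ is a singleton and $t\mapsto c(t)$ is continuous, and the intermediate value theorem applies directly. We will verify continuity via compactness of minimizers and uniqueness.

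It remains to check $0<C_{\Omega,p}<1$ and the explicit bound. Positivity is clear. For $C_{\Omega,p}<1$, note $\rho>p>1$, so $(1+\rho)^{\max\{2,p\}}>4$, while $c_{\mathrm{deg}}(p)\leq p-1\leq(1+\rho)^{p}$ by (\ref{lower bound cp}). In the singular case $\underline{c}_{\mathrm{sing}}(p)\leq p(p-1)/2^{p-1}<1$, and the factor $((p-1)/(2p-1))^{2-p}<1$. Finally, $C_{\Omega,p}/(1-C_{\Omega,p})\geq C_{\Omega,p}$, and (\ref{eq stability constant lower bound}) from Remark \ref{rem explicit stability} gives (\ref{eq explicit spectral gap}).
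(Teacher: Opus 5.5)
Your proof is correct and has the same structure as the paper's: on every admissible path you find a point $w$ at which $c=0$ minimizes $c\mapsto\norm{\nabla(w-cu_{1})}_{L^{p}(\Omega)}$, apply (\ref{eq gradient stability unified}) there with normalized distance equal to $1$, and pass to the mountain-pass value (\ref{eigenvalue second}). The one real difference is how you locate that point.

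The paper uses the first-order condition. The function $F(s)=\int_{\Omega}|\nabla\gamma(s)|^{p-2}\langle\nabla\gamma(s),\nabla u_{1}\rangle dx$ is continuous, because $\xi\mapsto|\xi|^{p-2}\xi$ is continuous from $L^{p}$ to $L^{p/(p-1)}$. It equals $\lambda_{1}(\Omega,p)$ at $s=0$ and $-\lambda_{1}(\Omega,p)$ at $s=1$, so it vanishes at some $s_{\gamma}$. Strict convexity of $\Phi(c)=\norm{\nabla(w-cu_{1})}_{L^{p}(\Omega)}^{p}$ together with $\Phi'(0)=-pF(s_{\gamma})=0$ then shows that $c=0$ is the minimizer.

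You instead follow the argmin set $M(t)$ and use upper semicontinuity plus connectedness of $[0,1]$. Since $0\in M(t)$ exactly when $F(t)=0$, both routes find the same points. The paper's route is more elementary: a scalar intermediate value argument, with no coercivity or Berge-type continuity of minimizers. Your connectedness version is already complete as stated and would survive without differentiability or strict convexity of the norm.

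Consequently your later uniqueness discussion is unnecessary. Its caveat about $\nabla\gamma(t)$ being parallel to $\nabla u_{1}$ is also spurious: $\Phi$ is strictly convex for every $w$, because $\norm{\cdot}_{L^{p}(\Omega)}^{p}$ is strictly convex and $\nabla u_{1}\neq0$.

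Two cosmetic points:
\begin{enumerate}
\item The display defining $h(t)$ is meaningless and should be deleted.
\item In your direct proof of $C_{\Omega,p}<1$, the chain should read $c_{\mathrm{deg}}(p)\leq p-1<(1+\rho)^{p}$, with a strict inequality. Alternatively, as in the paper, $C_{\Omega,p}<1$ follows for free from the stability inequality at the intersection point, whose left-hand side $1-\lambda_{1}(\Omega,p)/\norm{\nabla w}_{L^{p}(\Omega)}^{p}$ is strictly less than $1$.
\end{enumerate}
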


Together with the inradius bounds of Brasco \cite[Theorem~1.2 and Remark~1.4]{brasco2018principal}, our Theorem \ref{thm fundamental gap} leads to a bound depending only on the geometry of the domain.

\begin{cor}\label{cor inradius fundamental gap}
Under the assumptions of Theorem \ref{thm fundamental gap}, let $r_{\Omega}$ be the inradius of $\Omega$. Then,
\begin{align}\label{eq unified geometric gap}
\Gamma(\Omega,p)\geq K_{n,p}
\frac{r_{\Omega}^{\max\{2,p\}-p}}{\mathrm{diam}(\Omega)^{\max\{2,p\}}},
\end{align}
where
\begin{align}
K_{n,p}:=\frac{\pi_{p}^{p}}{
2^{p}\left(\frac{p+1}{2}+\frac{\lambda_{1}(B_{1}(0),p)^{1/p}}{\pi_{p}}\right)^{\max\{2,p\}}}
\times
\begin{cases}
c_{\mathrm{deg}}(p),&p\geq 2,\\[1ex]
\underline{c}_{\mathrm{sing}}(p)\left(\dfrac{p-1}{2p-1}\right)^{2-p},&1<p<2
\end{cases}
\label{eq geometric gap constant}
\end{align}
and $B_{1}(0)$ denotes the unit ball in $\mathbb{R}^{n}$.
\end{cor}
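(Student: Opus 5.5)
The plan is to combine the explicit bound \eqref{eq explicit spectral gap} from Theorem~\ref{thm fundamental gap} with both sides of Brasco's inradius estimate \eqref{eq brasco inradius bounds} in Theorem~\ref{thm brasco inradius}. Write $\alpha=\max\{2,p\}$, $d=\mathrm{diam}(\Omega)$, $r=r_{\Omega}$ and $\lambda_1=\lambda_1(\Omega,p)$, and let $\kappa_p$ denote the $p$-dependent factor ($c_{\mathrm{deg}}(p)$ or $\underline{c}_{\mathrm{sing}}(p)((p-1)/(2p-1))^{2-p}$). The bound \eqref{eq explicit spectral gap} reads
\begin{align*}
\Gamma(\Omega,p)\geq \kappa_p\,\frac{\lambda_1}{\left(p+1+\lambda_1^{1/p}d/\pi_p\right)^{\alpha}}.
\end{align*}
The numerator $\lambda_1$ is increasing in $\lambda_1$, so I bound it from below by the lower Brasco bound $(\pi_p/(2r))^p$. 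The denominator is also increasing in $\lambda_1$, so I bound it from above using $\lambda_1^{1/p}\leq \lambda_1(B_1(0),p)^{1/p}/r$.

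The next step turns the denominator into a pure multiple of $d/r$. Since $\Omega$ is convex and contains a ball of radius $r$, we have $d\geq 2r$, hence $1\leq d/(2r)$. This gives
\begin{align*}
p+1+\frac{\lambda_1(B_1(0),p)^{1/p}}{\pi_p}\frac{d}{r}\leq \frac{d}{r}\left(\frac{p+1}{2}+\frac{\lambda_1(B_1(0),p)^{1/p}}{\pi_p}\right)=:\frac{d}{r}A_{n,p}.
\end{align*}
Substituting both bounds yields
\begin{align*}
\Gamma(\Omega,p)\geq \kappa_p\frac{\pi_p^p}{2^p r^p}\cdot\frac{r^{\alpha}}{d^{\alpha}A_{n,p}^{\alpha}}
=K_{n,p}\frac{r^{\alpha-p}}{d^{\alpha}},
\end{align*}
which matches \eqref{eq geometric gap constant} exactly.

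There is no genuine obstacle here. The one step to check carefully is the direction of monotonicity: the right-hand side $x\mapsto x/(p+1+x^{1/p}d/\pi_p)^{\alpha}$ is not monotone in $x$ when $\alpha\geq p$. That is why I bound the numerator and the denominator separately, using the opposite sides of Brasco's inequality for each, rather than substituting a single value of $\lambda_1$. The remaining ingredient, $d\geq 2r$, is elementary.
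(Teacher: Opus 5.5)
Your proposal is correct and is essentially the paper's proof: the paper also bounds $1+\rho$ from above via \eqref{eq rho upper bound}, the upper Brasco bound and $2r_{\Omega}\leq\mathrm{diam}(\Omega)$, and bounds $\lambda_{1}(\Omega,p)$ from below by the lower Brasco bound, which is exactly your separate treatment of denominator and numerator. One small correction to your aside: the map $x\mapsto x/(p+1+x^{1/p}\mathrm{diam}(\Omega)/\pi_{p})^{\alpha}$ is actually increasing when $\alpha=p$ (i.e.\ $p\geq2$) and is non-monotone only for $1<p<2$, but this does not affect the argument, since bounding the two pieces separately is valid in every case.
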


\begin{rem}
In the singular case $1<p<2$, Chen and Hauer \cite[Theorems~1.2 and~1.3]{chen2026fundamental} constructed smooth convex domains collapsing to a segment whose fundamental gaps tend to zero, ruling out a positive lower bound depending only on the diameter. Corollary \ref{cor inradius fundamental gap} complements this result by giving an explicit positive lower bound involving both the inradius and the diameter. The factor $r_{\Omega}^{2-p}$ allows this lower bound to tend to zero under collapse at fixed diameter.
\end{rem}

\section{Proofs}\label{sec proofs}

In this section, we prove the results stated in Section \ref{sec main results}. We first prove the explicit bounds for $\underline{c}_{\mathrm{sing}}(p)$ in Proposition \ref{prop for expl 1<p<2}, then the gradient stability inequality, treating the degenerate and singular cases separately. We then use the mountain-pass characterization of the second eigenvalue to prove the fundamental gap estimate and apply the bounds of Brasco to obtain its inradius corollary.

\begin{proof}[\textbf{Proof of Proposition \ref{prop for expl 1<p<2}}]
Let $1<p<2$. Fix $s,t\in\mathbb{R}$ with $s^2+t^2>0$, and define
\begin{align*}
r=\sqrt{(1+s)^2+t^2},
\qquad f(u)=\left((1+us)^2+u^2t^2\right)^{p/2},
\qquad 0\leq u\leq1.
\end{align*}
Then $f(0)=1$, $f(1)=r^p$ and $f'(0)=ps$. By Cauchy-Schwarz,
\begin{align*}
\left(s+u(s^2+t^2)\right)^2
\leq(s^2+t^2)\left((1+us)^2+u^2t^2\right).
\end{align*}
Since $p-2<0$, direct differentiation therefore gives
\begin{align}
f''(u)
&=p(s^2+t^2)\left((1+us)^2+u^2t^2\right)^{(p-2)/2}\notag\\
&\quad+p(p-2)\left(s+u(s^2+t^2)\right)^2
\left((1+us)^2+u^2t^2\right)^{(p-4)/2}\notag\\
&\geq p(p-1)(s^2+t^2)\left((1+us)^2+u^2t^2\right)^{(p-2)/2}.
\label{eq singular proof second derivative lower bound}
\end{align}
The derivative formula holds wherever $(1+us,ut)\neq(0,0)$. If this vector vanishes at $u_0\in(0,1]$, then $t=0$, $s=-1/u_0$ and $f(u)=|s|^p|u-u_0|^p$. Since $p-2>-1$, its second derivative is integrable near $u_0$ and $f'$ is absolutely continuous. Otherwise, $f\in C^2([0,1])$. Thus, Taylor's formula with integral remainder is valid in either case and gives
\begin{align*}
r^p-1-ps=\int_0^1(1-u)f''(u)\,du.
\end{align*}
The triangle inequality yields
\begin{align}\label{eq singular proof triangle inequality}
\sqrt{(1+us)^2+u^2t^2}
=\left|(1-u)(1,0)+u(1+s,t)\right|
\leq1-u+ur.
\end{align}
Since $p-2<0$, substituting the bounds (\ref{eq singular proof second derivative lower bound}) and (\ref{eq singular proof triangle inequality}) into Taylor's integral remainder formula gives
\begin{align}\label{eq singular proof remainder lower bound}
r^p-1-ps
\geq p(p-1)(s^2+t^2)\int_0^1(1-u)(1-u+ur)^{p-2}\,du.
\end{align}
To estimate this integral, we use the conjugate H\"older exponents $3-p$ and $(3-p)/(2-p)$ and the factorization
\begin{align*}
(1-u)^{1/(3-p)}
=\left((1-u)(1-u+ur)^{p-2}\right)^{1/(3-p)}
(1-u+ur)^{(2-p)/(3-p)}.
\end{align*}
H\"older's inequality gives
\begin{align}
\int_0^1(1-u)^{1/(3-p)}\,du
&\leq\left(\int_0^1(1-u)(1-u+ur)^{p-2}\,du\right)^{1/(3-p)}\notag\\
&\quad\times\left(\int_0^1(1-u+ur)\,du\right)^{(2-p)/(3-p)}.
\label{eq singular proof holder inequality}
\end{align}
The integrals are finite also when $r=0$ since then $(1-u)(1-u+ur)^{p-2}=(1-u)^{p-1}$ for $u<1$. Direct integration gives
\begin{align*}
\int_0^1(1-u)^{1/(3-p)}\,du=\frac{3-p}{4-p},
\qquad
\int_0^1(1-u+ur)\,du=\frac{1+r}{2}.
\end{align*}
Substituting these values into (\ref{eq singular proof holder inequality}) and rearranging, we find
\begin{align}\label{eq singular proof integral lower bound}
\int_0^1(1-u)(1-u+ur)^{p-2}\,du
\geq2^{2-p}\left(\frac{3-p}{4-p}\right)^{3-p}(1+r)^{p-2}.
\end{align}
Applying (\ref{eq singular proof integral lower bound}) in (\ref{eq singular proof remainder lower bound}) and dividing by $(1+r)^{p-2}(s^2+t^2)>0$, we obtain
\begin{align*}
\frac{r^p-1-ps}{(1+r)^{p-2}(s^2+t^2)}
\geq p(p-1)2^{2-p}\left(\frac{3-p}{4-p}\right)^{3-p}.
\end{align*}
Taking the infimum over $s,t$ proves the lower bound.
\end{proof}

\begin{proof}[\textbf{Proof of Theorem \ref{thm gradient stability}}]
Let $u\in C_{0}^{\infty}(\Omega)$ with $u\neq 0$. We first establish a gradient conversion estimate valid for every $p>1$. Choose $c\in\mathbb{R}$ to minimize 
\begin{align}\label{int cond}
\int_{\Omega}|u-cu_{1}|^{p}dx
\end{align}
and define
\begin{align*}
f:=\frac{u}{u_{1}}-c.
\end{align*}
Since $c$ minimizes (\ref{int cond}), we have
\begin{align*}
\int_{\Omega}|f|^{p-2}f u_{1}^{p}dx = 0.
\end{align*}
Since $u_{1}$ is positive and log-concave \cite{sakaguchi1987concavity}, so is $u_{1}^{p}$. The weighted Poincar\'e inequality in Theorem \ref{thm log-convex stab} therefore gives
\begin{align}\label{poincare weighted eq}
\left(\int_{\Omega}|\nabla f|^{p}|u_{1}|^{p}dx\right)^{\frac{1}{p}} \geq \frac{\pi_{p}}{\mathrm{diam}(\Omega)}\left(\int_{\Omega}|f|^{p}|u_{1}|^{p}dx\right)^{\frac{1}{p}}.
\end{align}
Next, we estimate the term $f\nabla u_{1}$. First, let us choose $\varphi=u_{1}|f|^{p}$ as a test function in (\ref{eq weak eigenfunction}) for the first eigenpair $(\lambda_{1}(\Omega,p),u_{1})$. By the product rule and the chain rule,
\begin{align}
\nabla\left(u_{1}|f|^{p}\right)
&=|f|^{p}\nabla u_{1}+u_{1}\nabla\left(|f|^{p}\right)\notag\\
&=|f|^{p}\nabla u_{1}+p u_{1}|f|^{p-2}f\nabla f.
\label{eq gradient test function derivative}
\end{align}
This test function is admissible since $u\in C_{0}^{\infty}(\Omega)$ and $u_{1}$ is positive and locally $C^{1}$ in $\Omega$. Both $f$ and $\nabla f$ are bounded and $f=-c$ outside the compact support of $u$. The gradient in (\ref{eq gradient test function derivative}) therefore belongs to $L^{p}(\Omega)$. Moreover, $u_{1}|f|^{p}-|c|^{p}u_{1}$ has compact support in $\Omega$, so $u_{1}|f|^{p}\in W_{0}^{1,p}(\Omega)$. Thus, the weak formulation (\ref{eq weak eigenfunction}) gives
\begin{align}\label{eq gradient test identity}
\int_{\Omega}|f|^{p}|\nabla u_{1}|^{p}dx
+p\int_{\Omega}u_{1}|f|^{p-2}f|\nabla u_{1}|^{p-2}
\langle\nabla u_{1},\nabla f\rangle dx
=\lambda_{1}(\Omega,p)\int_{\Omega}u_{1}^{p}|f|^{p}dx.
\end{align}
Moving the second integral on the left-hand side of (\ref{eq gradient test identity}) to the right-hand side gives
\begin{align}\label{eq gradient test rearranged}
\int_{\Omega}|f|^{p}|\nabla u_{1}|^{p}dx
=\lambda_{1}(\Omega,p)\int_{\Omega}u_{1}^{p}|f|^{p}dx-p\int_{\Omega}u_{1}|f|^{p-2}f|\nabla u_{1}|^{p-2}
\langle\nabla u_{1},\nabla f\rangle dx.
\end{align}
Applying the Cauchy-Schwarz inequality and then H\"older's inequality to (\ref{eq gradient test rearranged}), we get
\begin{align}\label{eq gradient testing holder}
\int_{\Omega}|f\nabla u_{1}|^{p}dx \leq \lambda_{1}(\Omega,p)\int_{\Omega}|u_{1}f|^{p}dx + p\left(\int_{\Omega}|f\nabla u_{1}|^{p}dx\right)^{\frac{p-1}{p}}\left(\int_{\Omega}|u_{1}\nabla f|^{p}dx\right)^{\frac{1}{p}}.
\end{align}
Let us denote
\begin{align*}
s &= \left(\int_{\Omega}|f\nabla u_{1}|^{p}dx\right)^{\frac{1}{p}},\\
b &= \left(\int_{\Omega}|u_{1}\nabla f|^{p}dx\right)^{\frac{1}{p}}.
\end{align*}
Using the weighted Poincar\'e estimate (\ref{poincare weighted eq}) to bound the first term on the right-hand side of (\ref{eq gradient testing holder}), we find
\begin{align}\label{eq gradient conversion scalar}
s^{p}\leq\lambda_{1}(\Omega,p)
\left(\frac{\mathrm{diam}(\Omega)}{\pi_{p}}\right)^{p}b^{p}
+pbs^{p-1}.
\end{align}
If $b=0$, then $s=0$, so the required gradient estimate is immediate. Suppose that $b>0$. Dividing (\ref{eq gradient conversion scalar}) by $b^{p}$ and rearranging, we obtain
\begin{align}\label{eq gradient conversion ratio}
\left(\frac{s}{b}\right)^{p-1}\left(\frac{s}{b}-p\right)
\leq\lambda_{1}(\Omega,p)
\left(\frac{\mathrm{diam}(\Omega)}{\pi_{p}}\right)^{p}.
\end{align}
Let us verify that (\ref{eq rho}) has a unique solution $\rho>p$. The function
\begin{align*}
t\mapsto t^{p-1}(t-p)-\lambda_{1}(\Omega,p)
\left(\frac{\mathrm{diam}(\Omega)}{\pi_{p}}\right)^{p}
\end{align*}
is continuous on $[p,\infty)$. At $t=p$, its value is
\begin{align*}
-\lambda_{1}(\Omega,p)
\left(\frac{\mathrm{diam}(\Omega)}{\pi_{p}}\right)^{p}<0.
\end{align*}
On the other hand,
\begin{align*}
\lim_{t\to+\infty}\left(t^{p-1}(t-p)-\lambda_{1}(\Omega,p)
\left(\frac{\mathrm{diam}(\Omega)}{\pi_{p}}\right)^{p}\right)=+\infty,
\end{align*}
so the function is positive for sufficiently large $t$. Thus, by the Intermediate Value Theorem, there exists a zero $\rho>p$. Moreover,
\begin{align*}
\frac{d}{dt}\left(t^{p-1}(t-p)-\lambda_{1}(\Omega,p)
\left(\frac{\mathrm{diam}(\Omega)}{\pi_{p}}\right)^{p}\right)
=pt^{p-2}(t-p+1)>0\quad\text{for }t\geq p,
\end{align*}
which shows that the function is strictly increasing on $[p,\infty)$ and proves uniqueness. If $s/b\leq p$, then $s/b<\rho$. Otherwise, (\ref{eq gradient conversion ratio}) and strict monotonicity on $[p,\infty)$ give $s/b\leq\rho$. Thus, in both cases,
\begin{align}\label{eq ground state gradient bound}
\left(\int_{\Omega}|f\nabla u_{1}|^{p}dx\right)^{\frac{1}{p}}
\leq\rho\left(\int_{\Omega}|u_{1}\nabla f|^{p}dx\right)^{\frac{1}{p}}.
\end{align}
Since $u-cu_{1}=u_{1}f$, the product rule, the triangle inequality and (\ref{eq ground state gradient bound}) imply
\begin{align}
\left(\int_{\Omega}|\nabla(u-cu_{1})|^{p}dx\right)^{\frac{1}{p}}
&\leq \left(\int_{\Omega}|u_{1}\nabla f|^{p}dx\right)^{\frac{1}{p}} + \left(\int_{\Omega}|f\nabla u_{1}|^{p}dx\right)^{\frac{1}{p}}\notag\\
&\leq (1+\rho)\left(\int_{\Omega}|u_{1}\nabla f|^{p}dx\right)^{\frac{1}{p}}.\label{eq gradient conversion}
\end{align}

\noindent\textbf{(i) Case $p\geq 2$.} By Theorem \ref{thm yz26} and (\ref{eq gradient conversion}), we get
\begin{align}
\int_{\Omega}|\nabla u|^{p}dx-\lambda_{1}(\Omega,p)\int_{\Omega}|u|^{p}dx&\geq c_{\mathrm{deg}}(p)\int_{\Omega}|u_{1}\nabla f|^{p}dx\notag\\
&\geq C_{\mathrm{deg}}(\Omega,p)\norm{\nabla(u-cu_{1})}_{L^{p}(\Omega)}^{p}\notag\\
&\geq C_{\mathrm{deg}}(\Omega,p)\inf_{a\in\mathbb{R}}\norm{\nabla(u-au_{1})}_{L^{p}(\Omega)}^{p},
\label{eq degenerate gradient stability}
\end{align}
where
\begin{align*}
C_{\mathrm{deg}}(\Omega,p):=\frac{c_{\mathrm{deg}}(p)}{(1+\rho)^{p}}.
\end{align*}

\noindent\textbf{(ii) Case $1<p<2$.} We choose $\varphi=\frac{|u|^{p}}{u_{1}^{p-1}}$ as a test function in (\ref{eq weak eigenfunction}) for the first eigenpair $(\lambda_{1}(\Omega,p),u_{1})$:
\begin{align}\label{eq singular test identity}
\int_{\Omega}|\nabla u_{1}|^{p-2}\left\langle\nabla u_{1},\nabla\left(\frac{|u|^{p}}{u_{1}^{p-1}}\right)\right\rangle dx
=\lambda_{1}(\Omega,p)\int_{\Omega}|u|^{p}dx.
\end{align}
Since $u$ has compact support and $u_{1}$ is positive and locally $C^{1}$ in $\Omega$, the test function $\varphi$ belongs to $C_{0}^{1}(\Omega)\subset W_{0}^{1,p}(\Omega)$. Expanding the gradient on the left-hand side of (\ref{eq singular test identity}), we get
\begin{align}
&p\int_{\Omega}|\nabla u_{1}|^{p-2}u_{1}^{1-p}|u|^{p-2}u\langle\nabla u, \nabla u_{1}\rangle dx - (p-1)\int_{\Omega}\left|\frac{u}{u_{1}}\right|^{p}|\nabla u_{1}|^{p}dx \notag\\
&= \lambda_{1}(\Omega,p)\int_{\Omega}|u|^{p}dx.\label{eq singular eigenfunction identity}
\end{align}
Applying the Cauchy-Schwarz inequality, we have
\begin{align}
&|\nabla u_{1}|^{p-2}u_{1}^{1-p}|u|^{p-2}u\langle\nabla u,\nabla u_{1}\rangle\notag\\
&\qquad\leq |\nabla u_{1}|^{p-2}u_{1}^{1-p}|u|^{p-1}\left|\langle\nabla u,\nabla u_{1}\rangle\right|\notag\\
&\qquad\leq u_{1}^{1-p}|u|^{p-1}|\nabla u_{1}|^{p-1}|\nabla u|
=\left|\frac{u}{u_{1}}\nabla u_{1}\right|^{p-1}|\nabla u|.
\label{eq singular pointwise bound}
\end{align}
Rearranging (\ref{eq singular eigenfunction identity}) and using (\ref{eq singular pointwise bound}), followed by H\"older's inequality, gives
\begin{align}
(p-1)\int_{\Omega}\left|\frac{u}{u_{1}}\nabla u_{1}\right|^{p}dx &\leq p\int_{\Omega}\left|\frac{u}{u_{1}}\nabla u_{1}\right|^{p-1}|\nabla u|dx - \lambda_{1}(\Omega,p)\int_{\Omega}|u|^{p}dx \notag\\&\leq p\left(\int_{\Omega}\left|\frac{u}{u_{1}}\nabla u_{1}\right|^{p}dx\right)^{\frac{p-1}{p}}\left(\int_{\Omega}|\nabla u|^{p}dx\right)^{\frac{1}{p}}.
\label{eq singular testing holder}
\end{align}
If $\int_{\Omega}\left|\frac{u}{u_1}\nabla u_1\right|^p dx=0$,
then its $1/p$-power is zero and hence is bounded by
$\frac{p}{p-1}\norm{\nabla u}_{L^p(\Omega)}$.
Otherwise, dividing \eqref{eq singular testing holder}
by $(p-1)\left(\int_{\Omega}
\left|\frac{u}{u_1}\nabla u_1\right|^p dx
\right)^{\frac{p-1}{p}}$ gives the same bound.
Thus, in both cases,
\begin{align}\label{bound grad u1 ratio}
\left(\int_{\Omega}\left|\frac{u}{u_{1}}\nabla u_{1}\right|^{p}dx\right)^{\frac{1}{p}} \leq \frac{p}{p-1}\left(\int_{\Omega}|\nabla u|^{p}dx\right)^{\frac{1}{p}}.
\end{align}
Let us define
\begin{align*}
W&:=|\nabla u|+\left|\frac{u}{u_{1}}\nabla u_{1}\right|,\\
V&:=u_{1}\nabla\left(\frac{u}{u_{1}}\right).
\end{align*}
Observe that
\[
|V|\leq |\nabla u|
+\left|\frac{u}{u_1}\nabla u_1\right|=W.
\]
Thus, $V=0$ wherever $W=0$. Since $p-2<0$,
we set $W^{p-2}|V|^2=0$ on $\{W=0\}$. Now, by Theorem \ref{thm poinc 1p2}, we have
\begin{align}\label{eq singular remainder}
\int_{\Omega}|\nabla u|^{p}dx-\lambda_{1}(\Omega,p)\int_{\Omega}|u|^{p}dx \geq \underline{c}_{\mathrm{sing}}(p)\int_{\Omega}W^{p-2}|V|^{2}dx.
\end{align}
Using H\"older's inequality with conjugate exponents $\frac{2}{p}$ and $\frac{2}{2-p}$, we have
\begin{align}\label{eq singular holder application}
\int_{\Omega}|V|^{p}dx = \int_{\Omega}\left(|V|^{p}W^{\frac{p(p-2)}{2}}\right)W^{\frac{p(2-p)}{2}}dx \leq \left(\int_{\Omega}|V|^{2}W^{p-2}dx\right)^{\frac{p}{2}}\left(\int_{\Omega}W^{p}dx\right)^{\frac{2-p}{2}}.
\end{align}
Rearranging (\ref{eq singular holder application}) gives
\begin{align}\label{eq singular holder bound}
\int_{\Omega}W^{p-2}|V|^{2}dx \geq \frac{\left(\int_{\Omega}|V|^{p}dx\right)^{\frac{2}{p}}}{\left(\int_{\Omega}W^{p}dx\right)^{\frac{2-p}{p}}} = \frac{\left(\int_{\Omega}\left|u_{1}\nabla\left(\frac{u}{u_{1}}\right)\right|^{p}dx\right)^{\frac{2}{p}}}{\left(\int_{\Omega}\left(|\nabla u| + \left|\frac{u}{u_{1}}\nabla u_{1}\right|\right)^{p}dx\right)^{\frac{2-p}{p}}}.
\end{align}
By the triangle inequality and (\ref{bound grad u1 ratio}), the $L^{p}$ norm occurring in the denominator of (\ref{eq singular holder bound}) is bounded by
\begin{align}
\left(\int_{\Omega}\left(|\nabla u| + \left|\frac{u}{u_{1}}\nabla u_{1}\right|\right)^{p}dx\right)^{\frac{1}{p}} &\leq \left(\int_{\Omega}|\nabla u|^{p}dx\right)^{\frac{1}{p}} + \left(\int_{\Omega}\left|\frac{u}{u_{1}}\nabla u_{1}\right|^{p}dx\right)^{\frac{1}{p}} \notag\\
&\leq \frac{2p-1}{p-1}\left(\int_{\Omega}|\nabla u|^{p}dx\right)^{\frac{1}{p}}.\label{eq singular weight bound}
\end{align}
Thus, putting (\ref{eq singular remainder}), (\ref{eq singular holder bound}) and (\ref{eq singular weight bound}) together, we get
\begin{multline}\label{interm grad 1p2}
\int_{\Omega}|\nabla u|^{p}dx-\lambda_{1}(\Omega,p)\int_{\Omega}|u|^{p}dx\\
\geq \underline{c}_{\mathrm{sing}}(p)\left(\frac{p-1}{2p-1}\right)^{2-p}
\left(\int_{\Omega}|\nabla u|^{p}dx\right)^{-\frac{2-p}{p}}
\left(\int_{\Omega}\left|u_{1}\nabla\left(\frac{u}{u_{1}}\right)\right|^{p}dx\right)^{\frac{2}{p}}.
\end{multline}

Since $\nabla f=\nabla(u/u_{1})$, the inequality (\ref{interm grad 1p2}) with (\ref{eq gradient conversion}) gives
\begin{multline}\label{eq singular gradient stability}
\int_{\Omega}|\nabla u|^{p}dx-\lambda_{1}(\Omega,p)\int_{\Omega}|u|^{p}dx\\
\geq C_{\mathrm{sing}}(\Omega,p)\norm{\nabla u}_{L^{p}(\Omega)}^{-(2-p)}
\norm{\nabla(u-cu_{1})}_{L^{p}(\Omega)}^{2}\\
\geq C_{\mathrm{sing}}(\Omega,p)\norm{\nabla u}_{L^{p}(\Omega)}^{-(2-p)}
\inf_{a\in\mathbb{R}}\norm{\nabla(u-au_{1})}_{L^{p}(\Omega)}^{2},
\end{multline}
where
\begin{align*}
C_{\mathrm{sing}}(\Omega,p):=\frac{\underline{c}_{\mathrm{sing}}(p)}{(1+\rho)^{2}}
\left(\frac{p-1}{2p-1}\right)^{2-p}.
\end{align*}

\noindent\textbf{(iii) Case $1<p<\infty$.} Dividing (\ref{eq degenerate gradient stability}) for $p\geq2$ and (\ref{eq singular gradient stability}) for $1<p<2$ by $\norm{\nabla u}_{L^{p}(\Omega)}^{p}$ and assuming $u\neq 0$, we obtain the common inequality
\begin{align*}
1-\lambda_{1}(\Omega,p)\frac{\norm{u}_{L^{p}(\Omega)}^{p}}{\norm{\nabla u}_{L^{p}(\Omega)}^{p}}
\geq C_{\Omega,p}
\inf_{c\in\mathbb{R}}\left(\frac{\norm{\nabla(u-cu_{1})}_{L^{p}(\Omega)}}{\norm{\nabla u}_{L^{p}(\Omega)}}\right)^{\max\{2,p\}}.
\end{align*}
Indeed, the exponent is $p$ when $p\geq 2$ and $(2-p)+p=2$ gives the exponent $2$ when $1<p<2$. Finally, the $L^{p}$ norms and the distance to $\mathbb{R}u_{1}$ in the gradient norm are continuous in $W_{0}^{1,p}(\Omega)$. Density therefore extends (\ref{eq gradient stability unified}) to every nonzero $u\in W_{0}^{1,p}(\Omega)$.

\vspace{1mm}

\noindent\textbf{(iv) Optimality of the exponent $\max\{2,p\}$.}
Define
\begin{align*}
\delta(u)&:=1-\lambda_1\frac{\norm{u}_{L^{p}(\Omega)}^p}{\norm{\nabla u}_{L^{p}(\Omega)}^p},\\
d(u)&:=\inf_{a\in\mathbb{R}}\norm{\nabla(u-au_1)}_{L^{p}(\Omega)}.
\end{align*}
To prove optimality, suppose that, for some
$0<\beta<\max\{2,p\}$, there exists a constant $C_*>0$,
independent of $u$, such that
\begin{align}\label{eq optimality smaller exponent}
\delta(u)\geq C_*
\left(\frac{d(u)}{\norm{\nabla u}_{L^{p}(\Omega)}}\right)^\beta
\end{align}
for every nonzero $u\in W_0^{1,p}(\Omega)$. We construct perturbations of $u_1$ that contradict (\ref{eq optimality smaller exponent}). Let $\norm{u_1}_{L^{p}(\Omega)}=1$. Since
\begin{align*}
\norm{\nabla u_1}_{L^{p}(\Omega)}^p
=\lambda_1\norm{u_1}_{L^{p}(\Omega)}^p=\lambda_1>0
\end{align*}
and $u_1\in C^1_{\mathrm{loc}}(\Omega)$, there exists
$x_0\in\Omega$ such that $\nabla u_1(x_0)\neq0$. By continuity, choose $r>0$ sufficiently small that
\begin{align}\label{eq optimality local gradient bounds}
0<m\leq|\nabla u_1|\leq M
\qquad\text{on }B_r(x_0).
\end{align}
Decreasing $r$ if necessary, we may also assume that
\begin{align*}
M^p|B_1(0)|r^n\leq\frac12\norm{\nabla u_1}_{L^{p}(\Omega)}^p.
\end{align*}
Now set $B:=B_r(x_0)$. Using
(\ref{eq optimality local gradient bounds}), we obtain
\begin{align*}
\norm{\nabla u_1}_{L^{p}(\Omega\setminus B)}^p
&=\norm{\nabla u_1}_{L^{p}(\Omega)}^p-\int_B|\nabla u_1|^p\,dx\\
&\geq\norm{\nabla u_1}_{L^{p}(\Omega)}^p-M^p|B_1(0)|r^n\\
&\geq\frac12\norm{\nabla u_1}_{L^{p}(\Omega)}^p.
\end{align*}
Consequently,
\begin{align}\label{eq optimality exterior gradient bound}
\norm{\nabla u_1}_{L^{p}(\Omega\setminus B)}
\geq2^{-1/p}\norm{\nabla u_1}_{L^{p}(\Omega)}>0.
\end{align}
Let $h\in C_0^\infty(\Omega)$ have support compactly contained in $B$. Then, $h=\nabla h=0$ on $\Omega\setminus B$. For every $a\in\mathbb{R}$, we therefore have
\begin{align*}
\norm{\nabla(u_1+h-au_1)}_{L^{p}(\Omega)}^p
&\geq\int_{\Omega\setminus B}|(1-a)\nabla u_1|^p\,dx\\
&=|1-a|^p\norm{\nabla u_1}_{L^{p}(\Omega\setminus B)}^p\\
&\geq\frac12|1-a|^p\norm{\nabla u_1}_{L^{p}(\Omega)}^p,
\end{align*}
where the last inequality follows from
(\ref{eq optimality exterior gradient bound}).
Taking the $p$-th root gives
\begin{align}\label{eq optimality coefficient bound}
|1-a|\norm{\nabla u_1}_{L^{p}(\Omega)}
\leq2^{1/p}\norm{\nabla(u_1+h-au_1)}_{L^{p}(\Omega)}.
\end{align}
Since
\begin{align*}
\nabla h=\nabla(u_1+h-au_1)-(1-a)\nabla u_1,
\end{align*}
the triangle inequality and
(\ref{eq optimality coefficient bound}) imply
\begin{align*}
\norm{\nabla h}_{L^{p}(\Omega)}
&\leq\norm{\nabla(u_1+h-au_1)}_{L^{p}(\Omega)}+|1-a|\norm{\nabla u_1}_{L^{p}(\Omega)}\\
&\leq(1+2^{1/p})\norm{\nabla(u_1+h-au_1)}_{L^{p}(\Omega)}.
\end{align*}
Taking the infimum over $a\in\mathbb{R}$ yields
\begin{align*}
\norm{\nabla h}_{L^{p}(\Omega)}\leq(1+2^{1/p})d(u_1+h).
\end{align*}
For the reverse bound, choose $a=1$:
\begin{align*}
d(u_1+h)
&=\inf_{a\in\mathbb{R}}\norm{\nabla(u_1+h-au_1)}_{L^{p}(\Omega)}\\
&\leq\norm{\nabla(u_1+h-u_1)}_{L^{p}(\Omega)}=\norm{\nabla h}_{L^{p}(\Omega)}.
\end{align*}
Thus, we have
\begin{align}\label{eq optimality distance comparison}
\frac{\norm{\nabla h}_{L^{p}(\Omega)}}{1+2^{1/p}}
\leq d(u_1+h)\leq\norm{\nabla h}_{L^{p}(\Omega)}.
\end{align}
Moreover,
\begin{align*}
\norm{\nabla(u_1+h)}_{L^{p}(\Omega)}
\geq\norm{\nabla u_1}_{L^{p}(\Omega\setminus B)}>0,
\end{align*}
so $u_1+h\neq0$ and its deficit is well defined. We next estimate this deficit. Recall the convexity remainder
\begin{align}\label{eq optimality cp formula}
C_p(\xi,\eta):=
|\xi|^p-|\xi-\eta|^p
-p|\xi-\eta|^{p-2}\langle\xi-\eta,\eta\rangle\geq0.
\end{align}
For scalar arguments, (\ref{eq optimality cp formula}) gives
\begin{align*}
C_p(u_1+h,h)=|u_1+h|^p-u_1^p-pu_1^{p-1}h.
\end{align*}
Taking $\psi=h$ in
(\ref{eq weak eigenfunction}), we obtain
\begin{align*}
\int_B|\nabla u_1|^{p-2}
\langle\nabla u_1,\nabla h\rangle\,dx
=\lambda_1\int_Bu_1^{p-1}h\,dx.
\end{align*}
Using this identity to cancel the linear terms, together with
\begin{align*}
\norm{\nabla u_1}_{L^{p}(\Omega)}^p-\lambda_1\norm{u_1}_{L^{p}(\Omega)}^p=0,
\end{align*}
we calculate
\begin{align}
&\norm{\nabla(u_1+h)}_{L^{p}(\Omega)}^p\delta(u_1+h) \nonumber
\\&=\int_B\left(
|\nabla u_1+\nabla h|^p-|\nabla u_1|^p
\right)\,dx
-\lambda_1\int_B\left(|u_1+h|^p-u_1^p\right)\,dx \nonumber
\\&=\int_BC_p(\nabla u_1+\nabla h,\nabla h)\,dx
-\lambda_1\int_BC_p(u_1+h,h)\,dx. \label{eq optimality deficit identity}
\end{align}
The scalar remainder is nonnegative by
(\ref{eq optimality cp formula}). Hence,
\begin{align}\label{eq optimality deficit upper bound}
0\leq\norm{\nabla(u_1+h)}_{L^{p}(\Omega)}^p\delta(u_1+h)\leq\int_BC_p(\nabla u_1+\nabla h,\nabla h)\,dx.
\end{align}

\smallskip
\noindent\textit{Case $1<p\leq2$.}
Fix a nonzero $\varphi\in C_0^\infty(B)$ and set
$u_t:=u_1+t\varphi$.
By (\ref{eq optimality distance comparison}),
\begin{align}\label{eq optimality fixed distance}
\frac{|t|\norm{\nabla\varphi}_{L^{p}(\Omega)}}{1+2^{1/p}}
\leq d(u_t)\leq|t|\norm{\nabla\varphi}_{L^{p}(\Omega)}.
\end{align}
Since $\varphi$ is nonzero and compactly supported,
$\norm{\nabla\varphi}_{L^{p}(\Omega)}>0$. Concavity then gives
\begin{align*}
|\nabla u_1+t\nabla\varphi|^p
&=\left(
|\nabla u_1|^2+
2t\langle\nabla u_1,\nabla\varphi\rangle+
t^2|\nabla\varphi|^2
\right)^{p/2}\\
&\leq|\nabla u_1|^p
+\frac p2|\nabla u_1|^{p-2}
\left(
2t\langle\nabla u_1,\nabla\varphi\rangle+
t^2|\nabla\varphi|^2
\right)\\
&=|\nabla u_1|^p+
pt|\nabla u_1|^{p-2}
\langle\nabla u_1,\nabla\varphi\rangle
+\frac p2|\nabla u_1|^{p-2}t^2|\nabla\varphi|^2.
\end{align*}
Consequently, (\ref{eq optimality cp formula}) yields
\begin{align}\label{eq cp estimate}
&C_p(\nabla u_1+t\nabla\varphi,t\nabla\varphi) \nonumber
\\&=|\nabla u_1+t\nabla\varphi|^p-|\nabla u_1|^p
-pt|\nabla u_1|^{p-2}
\langle\nabla u_1,\nabla\varphi\rangle \nonumber
\\&\leq\frac p2|\nabla u_1|^{p-2}t^2|\nabla\varphi|^2 \nonumber
\\&\leq\frac p2m^{p-2}t^2|\nabla\varphi|^2.
\end{align}
Here, the last inequality follows from
(\ref{eq optimality local gradient bounds}) and $p-2\leq0$.
Substituting into
(\ref{eq optimality deficit upper bound}), we obtain
\begin{align}\label{eq optimality fixed energy}
0\leq\norm{\nabla u_t}_{L^{p}(\Omega)}^p\delta(u_t)
\leq\frac p2m^{p-2}|t|^2\int_B|\nabla\varphi|^2\,dx.
\end{align}
For sufficiently small $|t|$, we have
\begin{align}\label{eq optimality fixed gradient bounds}
\frac12\norm{\nabla u_1}_{L^{p}(\Omega)}
\leq\norm{\nabla u_t}_{L^{p}(\Omega)}
\leq\frac32\norm{\nabla u_1}_{L^{p}(\Omega)}.
\end{align}
Dividing (\ref{eq optimality fixed energy}) by the lower
bound in (\ref{eq optimality fixed gradient bounds}) gives
\begin{align*}
\delta(u_t)
&\leq\frac{2^{p-1}p\,m^{p-2}}{\norm{\nabla u_1}_{L^{p}(\Omega)}^p}
\left(\int_B|\nabla\varphi|^2\,dx\right)|t|^2\\
&\leq C'|t|^2,
\end{align*}
where $C'>0$ is independent of $t$.
Similarly, (\ref{eq optimality fixed distance}) and
(\ref{eq optimality fixed gradient bounds}) imply
\begin{align*}
\frac{d(u_t)}{\norm{\nabla u_t}_{L^{p}(\Omega)}}
&\geq\frac{2\norm{\nabla\varphi}_{L^{p}(\Omega)}}
{3(1+2^{1/p})\norm{\nabla u_1}_{L^{p}(\Omega)}}|t|\\
&\geq c|t|,
\end{align*}
where $c>0$ is independent of $t$. Since $\beta<2$, applying
(\ref{eq optimality smaller exponent}) to $u_t$ gives
\begin{align*}
C_*c^\beta|t|^\beta
&\leq C_*\left(\frac{d(u_t)}{\norm{\nabla u_t}_{L^{p}(\Omega)}}\right)^\beta\\
&\leq\delta(u_t)\leq C'|t|^2.
\end{align*}
For nonzero $t$, dividing by $|t|^\beta$ yields
\begin{align*}
0<C_*c^\beta\leq C'|t|^{2-\beta}\longrightarrow0
\qquad\text{as }t\to0,
\end{align*}
a contradiction.

\smallskip
\noindent\textit{Case $p>2$.}
Fix a nonzero $\varphi\in C_0^\infty(B_1(0))$ and define
\begin{align*}
h_\varepsilon(x):=
\varepsilon\varphi\left(\frac{x-x_0}{\varepsilon}\right),
\qquad u_\varepsilon:=u_1+h_\varepsilon.
\end{align*}
For $0<\varepsilon<r$,
\begin{align*}
\operatorname{supp}h_\varepsilon
\Subset B_\varepsilon(x_0)\subset B,
\end{align*}
so $h_\varepsilon\in C_0^\infty(B)$. Direct differentiation gives \begin{align}\label{eq optimality scaled gradient}
\nabla h_\varepsilon(x)
=\nabla\varphi\left(\frac{x-x_0}{\varepsilon}\right).
\end{align}
Making the change of variables
$y=(x-x_0)/\varepsilon$, with $dx=\varepsilon^n\,dy$,
we obtain
\begin{align*}
\norm{\nabla h_\varepsilon}_{L^{p}(\Omega)}^p
&=\int_{B_\varepsilon(x_0)}
\left|\nabla\varphi\left(\frac{x-x_0}{\varepsilon}\right)\right|^p\,dx\\
&=\varepsilon^n\int_{B_1(0)}|\nabla\varphi(y)|^p\,dy.
\end{align*}
Taking the $p$-th root gives
\begin{align}\label{eq optimality scaled gradient norm}
\norm{\nabla h_\varepsilon}_{L^{p}(\Omega)}
=\varepsilon^{n/p}\norm{\nabla\varphi}_{L^{p}(B_1(0))}.
\end{align}
Similarly,
\begin{align*}
\norm{h_\varepsilon}_{L^{p}(\Omega)}^p
&=\int_{B_\varepsilon(x_0)}
\varepsilon^p
\left|\varphi\left(\frac{x-x_0}{\varepsilon}\right)\right|^p\,dx\\
&=\varepsilon^{n+p}\int_{B_1(0)}|\varphi(y)|^p\,dy.
\end{align*}
Hence,
\begin{align*}
\norm{h_\varepsilon}_{L^{p}(\Omega)}
=\varepsilon^{1+n/p}\norm{\varphi}_{L^{p}(B_1(0))}.
\end{align*}
Combining (\ref{eq optimality distance comparison}) and
(\ref{eq optimality scaled gradient norm}), we obtain
\begin{align}\label{eq optimality shrinking distance}
\frac{\varepsilon^{n/p}\norm{\nabla\varphi}_{L^{p}(B_1(0))}}
{1+2^{1/p}}
\leq d(u_\varepsilon)
\leq\varepsilon^{n/p}\norm{\nabla\varphi}_{L^{p}(B_1(0))}.
\end{align}
By (\ref{eq optimality local gradient bounds}) and
(\ref{eq optimality scaled gradient}),
\begin{align*}
|\nabla u_1|\leq M,\qquad
|\nabla h_\varepsilon|
\leq\norm{\nabla\varphi}_{L^{\infty}(B_1(0))}
\qquad\text{on }B.
\end{align*}
In the same way as in (\ref{eq cp estimate}), we have
\begin{align*}
&C_p(\nabla u_1+\nabla h_\varepsilon,\nabla h_\varepsilon)\\
&=|\nabla u_1+\nabla h_\varepsilon|^p-|\nabla u_1|^p
-p|\nabla u_1|^{p-2}
\langle\nabla u_1,\nabla h_\varepsilon\rangle\\
&\leq\left(|\nabla u_1|+|\nabla h_\varepsilon|\right)^p
+p|\nabla u_1|^{p-1}|\nabla h_\varepsilon|\\
&\leq\left(M+\norm{\nabla\varphi}_{L^{\infty}(B_1(0))}\right)^p
+pM^{p-1}\norm{\nabla\varphi}_{L^{\infty}(B_1(0))}.
\end{align*}
This bound is independent of $\varepsilon$. Since
\begin{align*}
|B_\varepsilon(x_0)|=|B_1(0)|\varepsilon^n,
\end{align*}
we obtain
\begin{align*}
&\int_B
C_p(\nabla u_1+\nabla h_\varepsilon,\nabla h_\varepsilon)\,dx\\
&\leq\left[
\left(M+\norm{\nabla\varphi}_{L^{\infty}(B_1(0))}\right)^p
+pM^{p-1}\norm{\nabla\varphi}_{L^{\infty}(B_1(0))}
\right]|B_1(0)|\varepsilon^n.
\end{align*}
Consequently,
(\ref{eq optimality deficit upper bound}) gives
\begin{align}\label{eq optimality shrinking energy}
0\leq\norm{\nabla u_\varepsilon}_{L^{p}(\Omega)}^p\delta(u_\varepsilon)
\leq C'\varepsilon^n,
\end{align}
where $C'>0$ is independent of $\varepsilon$. For sufficiently small $\varepsilon>0$, we have
\begin{align}\label{eq optimality shrinking gradient bounds}
\frac12\norm{\nabla u_1}_{L^{p}(\Omega)}
\leq\norm{\nabla u_\varepsilon}_{L^{p}(\Omega)}
\leq\frac32\norm{\nabla u_1}_{L^{p}(\Omega)}.
\end{align}
Dividing (\ref{eq optimality shrinking energy}) by the
lower bound in
(\ref{eq optimality shrinking gradient bounds}) and
adjusting the constant independently of $\varepsilon$,
we obtain
\begin{align*}
\delta(u_\varepsilon)\leq C'\varepsilon^n.
\end{align*}
Similarly, (\ref{eq optimality shrinking distance}) and
(\ref{eq optimality shrinking gradient bounds}) imply
\begin{align*}
\frac{d(u_\varepsilon)}{\norm{\nabla u_\varepsilon}_{L^{p}(\Omega)}}
&\geq\frac{2\norm{\nabla\varphi}_{L^{p}(B_1(0))}}
{3(1+2^{1/p})\norm{\nabla u_1}_{L^{p}(\Omega)}}\varepsilon^{n/p}\\
&\geq c\varepsilon^{n/p},
\end{align*}
where $c>0$ is independent of $\varepsilon$. Since $\beta<p$, applying
(\ref{eq optimality smaller exponent}) to
$u_\varepsilon$ gives
\begin{align*}
C_*c^\beta\varepsilon^{n\beta/p}
&\leq C_*\left(
\frac{d(u_\varepsilon)}{\norm{\nabla u_\varepsilon}_{L^{p}(\Omega)}}
\right)^\beta\\
&\leq\delta(u_\varepsilon)\leq C'\varepsilon^n.
\end{align*}
Dividing by $\varepsilon^{n\beta/p}>0$, we conclude that
\begin{align*}
0<C_*c^\beta
\leq C'\varepsilon^{n-n\beta/p}
=C'\varepsilon^{n(1-\beta/p)}
\longrightarrow0,
\end{align*}
because $n(1-\beta/p)>0$.
This is again a contradiction.
Thus, the exponent $\max\{2,p\}$ cannot be decreased.
\end{proof}

\begin{proof}[\textbf{Proof of Theorem \ref{thm takac conjecture general}}]
Let $d_{\Omega}(x):=\dist(x,\partial\Omega)$.
By the global boundary regularity result of Lieberman
\cite{lieberman1988boundary}, we have
$u_1\in C^{1,\theta}(\overline{\Omega})$
for some $\theta\in(0,1)$.
Moreover, by the strong maximum and boundary point
principles of V\'azquez
\cite[Theorem~5, p.~200]{vazquez1984strong}, we have
\begin{align*}
u_1>0\quad\text{in }\Omega,
\qquad
\frac{\partial u_1}{\partial\nu}<0
\quad\text{on }\partial\Omega,
\end{align*}
where $\nu$ is the outward unit normal.
By continuity, compactness of $\partial\Omega$,
and positivity of $u_1$ in $\Omega$,
there exists $c_0>0$ such that
\begin{align}\label{eq general domain boundary bound}
u_1(x)\geq c_0d_{\Omega}(x)
\quad\text{for all }x\in\Omega.
\end{align}
The global regularity also ensures that
$\|\nabla u_1\|_{L^\infty(\Omega)}<\infty$. Since a bounded connected $C^{2}$ domain is a John domain, we can use the improved Poincar\'e inequality \cite[Theorem~3.1]{drelichman2008improved} for $q=p$ and $\alpha=1$:
\begin{align}\label{eq general domain improved poincare}
\inf_{a\in\mathbb{R}}\norm{v-a}_{L^{p}(\Omega)}
\leq C\norm{d_{\Omega}\nabla v}_{L^{p}(\Omega)}
\end{align}
for every $v\in C_{0}^{1}(\Omega)$. Here and below, $C>0$ denotes a constant depending on $\Omega$ and $p$ that may change from one occurrence to the next. Fix $u\in C_{0}^{\infty}(\Omega)$ and set $v:=u/u_{1}$. Positivity and local $C^{1}$ regularity of $u_{1}$ imply $v\in C_{0}^{1}(\Omega)$ and
\begin{align*}
u_{1}\nabla v=\nabla u-\frac{u}{u_{1}}\nabla u_{1},
\qquad
u^{\parallel}=\frac{\int_{\Omega}u_{1}^{2}v\,dx}{\int_{\Omega}u_{1}^{2}dx}.
\end{align*}
For every $a\in\mathbb{R}$, H\"older's inequality, with $p'=p/(p-1)$, gives
\begin{align*}
|u^{\parallel}-a|
&=\frac{\left|\int_{\Omega}u_{1}^{2}(v-a)dx\right|}{\int_{\Omega}u_{1}^{2}dx}\\
&\leq\frac{\norm{u_{1}^{2}}_{L^{p'}(\Omega)}}{\int_{\Omega}u_{1}^{2}dx}
\norm{v-a}_{L^{p}(\Omega)}.
\end{align*}
The denominator is positive and the numerator is finite because $u_{1}$ is bounded. The triangle inequality therefore yields
\begin{align*}
\norm{v-u^{\parallel}}_{L^{p}(\Omega)}
&\leq\norm{v-a}_{L^{p}(\Omega)}+|\Omega|^{1/p}|a-u^{\parallel}|\\
&\leq\left(1+\frac{|\Omega|^{1/p}\norm{u_{1}^{2}}_{L^{p'}(\Omega)}}{\int_{\Omega}u_{1}^{2}dx}\right)
\norm{v-a}_{L^{p}(\Omega)}.
\end{align*}
Taking the infimum over $a\in\mathbb{R}$ and using (\ref{eq general domain improved poincare}) and (\ref{eq general domain boundary bound}), we obtain
\begin{align*}
\norm{v-u^{\parallel}}_{L^{p}(\Omega)}
\leq C\norm{d_{\Omega}\nabla v}_{L^{p}(\Omega)}
\leq C\norm{u_{1}\nabla v}_{L^{p}(\Omega)}.
\end{align*}
Since $u^{\perp}=u_{1}(v-u^{\parallel})$ and $u^{\parallel}$ is constant, it follows that
\begin{align}
\norm{\nabla u^{\perp}}_{L^{p}(\Omega)}
&\leq\norm{u_{1}\nabla v}_{L^{p}(\Omega)}
+\norm{\nabla u_{1}}_{L^{\infty}(\Omega)}\norm{v-u^{\parallel}}_{L^{p}(\Omega)}\notag\\
&\leq B\left\|u_{1}\nabla\left(\frac{u}{u_{1}}\right)\right\|_{L^{p}(\Omega)}
\label{eq general domain gradient comparison}
\end{align}
for some $B>0$ depending on $\Omega$ and $p$. This replaces the gradient conversion based on log-concavity in the convex-domain proof.

The derivation of (\ref{interm grad 1p2}) uses only the remainder estimate in Theorem \ref{thm poinc 1p2}, the first eigenfunction equation and H\"older's inequality, so it also holds on the present domain. Applying the gradient comparison (\ref{eq general domain gradient comparison}) in (\ref{interm grad 1p2}) yields
\begin{align*}
\int_{\Omega}|\nabla u|^{p}dx-\lambda_{1}(\Omega,p)\int_{\Omega}|u|^{p}dx\geq\frac{\underline{c}_{\mathrm{sing}}(p)}{B^{2}}
\left(\frac{p-1}{2p-1}\right)^{2-p}
\norm{\nabla u}_{L^{p}(\Omega)}^{-(2-p)}
\norm{\nabla u^{\perp}}_{L^{p}(\Omega)}^{2}.
\end{align*}
The projection $u\mapsto u^{\perp}$ is continuous on $W_{0}^{1,p}(\Omega)$ by H\"older's inequality, the boundedness of $u_{1}$ and the ordinary Poincar\'e inequality. Thus, density extends this estimate to every nonzero $u\in W_{0}^{1,p}(\Omega)$.
\end{proof}

\begin{proof}[\textbf{Proof of Theorem \ref{thm upper gradient stability}}] The calculation leading to (\ref{bound grad u1 ratio}) uses only the weak eigenfunction equation (\ref{eq weak eigenfunction}), so it also applies on the present domain. By density, using the positivity and local $C^1$ regularity of $u_1$ to identify the limit, this estimate extends to every $u\in W_0^{1,p}(\Omega)$:
\begin{align}\label{eq upper eigenfunction testing bound}
\norm{\frac{u}{u_1}\nabla u_1}_{L^p(\Omega)}
\leq\frac{p}{p-1}\norm{\nabla u}_{L^p(\Omega)}.
\end{align}
Moreover, the quotient rule gives, almost everywhere in $\Omega$,
\begin{align*}
\left|u_1\nabla\left(\frac{u}{u_1}\right)\right|
=\left|\nabla u-\frac{u}{u_1}\nabla u_1\right|
\leq|\nabla u|+\left|\frac{u}{u_1}\nabla u_1\right|.
\end{align*}
Since $p-2<0$, Theorem \ref{thm poinc 1p2 upper} therefore gives
\begin{align}\label{eq upper deficit quotient bound}
\int_{\Omega}|\nabla u|^{p}dx-\lambda_1(\Omega,p)\int_{\Omega}|u|^{p}dx\leq\overline{c}_{\mathrm{sing}}(p)
\int_{\Omega}\left|u_1\nabla\left(\frac{u}{u_1}\right)\right|^{p}dx.
\end{align}
Fix any $c\in\mathbb{R}$ and put $v=u-cu_1$. Then $v\in W_{0}^{1,p}(\Omega)$ and $\nabla(v/u_1)=\nabla(u/u_1)$. By the triangle inequality and (\ref{eq upper eigenfunction testing bound}), we have
\begin{align*}
\norm{u_1\nabla\left(\frac{u}{u_1}\right)}_{L^{p}(\Omega)}
&=\norm{\nabla v-\frac{v}{u_1}\nabla u_1}_{L^{p}(\Omega)}\\
&\leq\norm{\nabla v}_{L^{p}(\Omega)}
+\norm{\frac{v}{u_1}\nabla u_1}_{L^{p}(\Omega)}\\
&\leq\frac{2p-1}{p-1}\norm{\nabla(u-cu_1)}_{L^{p}(\Omega)}.
\end{align*}
Substituting this estimate into (\ref{eq upper deficit quotient bound}), taking the infimum over $c\in\mathbb{R}$ and dividing by $\norm{\nabla u}_{L^{p}(\Omega)}^{p}>0$ for nonzero $u$ yields (\ref{eq normalized upper gradient stability}). Next, we show that the exponent is optimal.

As in the proof of Theorem \ref{thm gradient stability}, write
\begin{align*}
\delta(u)&:=1-\lambda_1(\Omega,p)
\frac{\norm{u}_{L^{p}(\Omega)}^{p}}
{\norm{\nabla u}_{L^{p}(\Omega)}^{p}},\\
d(u)&:=\inf_{c\in\mathbb{R}}
\norm{\nabla(u-cu_1)}_{L^{p}(\Omega)}.
\end{align*}
To prove optimality, suppose that, for some $\beta>p$,
there exists a constant $C_*>0$, independent of $u$, such that
\begin{align}\label{eq upper stability larger exponent}
\delta(u)\leq C_*
\left(\frac{d(u)}{\norm{\nabla u}_{L^{p}(\Omega)}}\right)^\beta
\end{align}
for every nonzero $u\in W_0^{1,p}(\Omega)$ with $d(u)>0$.
Choose $B=B_r(x_0)$ with $\overline B\subset\Omega$ sufficiently
small that (\ref{eq optimality exterior gradient bound}) holds.
Let $0\neq \varphi\in C_0^\infty(B_1(0))$ and
\begin{align*}
h_\varepsilon(x)
:=\varepsilon\varphi\left(\frac{x-x_0}{\varepsilon}\right),
\qquad
u_\varepsilon:=u_1+h_\varepsilon,
\qquad 0<\varepsilon<r.
\end{align*}
The calculations (\ref{eq optimality scaled gradient norm}),
(\ref{eq optimality shrinking distance}) and
(\ref{eq optimality shrinking gradient bounds})
remain valid for $1<p<2$. In particular, $d(u_\varepsilon)>0$ and,
for sufficiently small $\varepsilon$,
\begin{align*}
\frac{d(u_\varepsilon)}
{\norm{\nabla u_\varepsilon}_{L^p(\Omega)}}
\leq
\frac{2\norm{\nabla\varphi}_{L^p(B_1(0))}}
{\norm{\nabla u_1}_{L^p(\Omega)}}
\varepsilon^{n/p}.
\end{align*}
Using (\ref{eq optimality deficit identity}), we get
\begin{align*}
\norm{\nabla u_\varepsilon}_{L^p(\Omega)}^p
\delta(u_\varepsilon)
&=\int_B
C_p(\nabla u_1+\nabla h_\varepsilon,\nabla h_\varepsilon)\,dx\\
&\quad-\lambda_1(\Omega,p)
\int_B C_p(u_1+h_\varepsilon,h_\varepsilon)\,dx.
\end{align*}
We next estimate the deficit from below using the bounds for
$C_p$ in \cite[Lemmas~2.2 and~2.3]{CT24}. By the triangle inequality and
(\ref{eq optimality scaled gradient}), we have on $B$
\begin{align*}
|\nabla u_1+\nabla h_\varepsilon|+|\nabla u_1|
&\leq 2|\nabla u_1|+|\nabla h_\varepsilon|\\
&\leq 2\norm{\nabla u_1}_{L^\infty(B)}
+\norm{\nabla\varphi}_{L^\infty(B_1(0))}.
\end{align*}
Since $p-2<0$, this implies
\begin{align*}
&\left(|\nabla u_1+\nabla h_\varepsilon|+|\nabla u_1|\right)^{p-2}\\
&\qquad\geq\left(
2\norm{\nabla u_1}_{L^\infty(B)}
+\norm{\nabla\varphi}_{L^\infty(B_1(0))}
\right)^{p-2}.
\end{align*}
The lower estimate in \cite[Lemma~2.2]{CT24} therefore yields
\begin{align}
&C_p(\nabla u_1+\nabla h_\varepsilon,\nabla h_\varepsilon) \nonumber
\\&\quad\geq\underline{c}_{\mathrm{sing}}(p)
\left(|\nabla u_1+\nabla h_\varepsilon|+|\nabla u_1|\right)^{p-2}
|\nabla h_\varepsilon|^2 \nonumber
\\&\quad\geq c|\nabla h_\varepsilon|^2
\qquad\text{on }B, \label{eq upper optimality gradient remainder lower bound}
\end{align}
where
\begin{align*}
c:=\underline{c}_{\mathrm{sing}}(p)
\left(
2\norm{\nabla u_1}_{L^\infty(B)}
+\norm{\nabla\varphi}_{L^\infty(B_1(0))}
\right)^{p-2}>0
\end{align*}
is independent of $\varepsilon$. Similarly, the upper estimate in
\cite[Lemma~2.3]{CT24}, applied to scalar arguments, gives
\begin{align}
C_p(u_1+h_\varepsilon,h_\varepsilon)
&\leq\overline{c}_{\mathrm{sing}}(p)
\left(|u_1+h_\varepsilon|+u_1\right)^{p-2}
|h_\varepsilon|^2 \nonumber
\\&\leq C|h_\varepsilon|^2
\qquad\text{on }B, \label{eq upper optimality scalar remainder upper bound}
\end{align}
with $C>0$ independent of $\varepsilon$ since
$|u_1+h_\varepsilon|+u_1\geq u_1$ and $u_1$ is bounded
away from zero on $\overline B$. Substituting (\ref{eq upper optimality gradient remainder lower bound})
and (\ref{eq upper optimality scalar remainder upper bound}) into
(\ref{eq optimality deficit identity}) with $h=h_\varepsilon$,
and making the same change of variables as in
(\ref{eq optimality scaled gradient norm}), we obtain
\begin{align*}
\norm{\nabla u_\varepsilon}_{L^p(\Omega)}^p
\delta(u_\varepsilon)
&\geq c\int_B|\nabla h_\varepsilon|^2\,dx
-\lambda_1(\Omega,p)C\int_B|h_\varepsilon|^2\,dx\\
&=c\varepsilon^n\int_{B_1(0)}|\nabla\varphi|^2\,dy
-\lambda_1(\Omega,p)C\varepsilon^{n+2}
\int_{B_1(0)}|\varphi|^2\,dy\\
&\geq\frac c2\varepsilon^n
\int_{B_1(0)}|\nabla\varphi|^2\,dy
\end{align*}
for sufficiently small $\varepsilon$. Using (\ref{eq optimality shrinking gradient bounds}) and
adjusting $c>0$ independently of $\varepsilon$, we conclude that
\begin{align*}
\delta(u_\varepsilon)\geq c\varepsilon^n.
\end{align*}
Combining this with
(\ref{eq upper stability larger exponent}) and the normalized
distance estimate gives
\begin{align*}
c\varepsilon^n
\leq\delta(u_\varepsilon)
\leq C_*
\left(
\frac{d(u_\varepsilon)}
{\norm{\nabla u_\varepsilon}_{L^p(\Omega)}}
\right)^\beta
\leq C'\varepsilon^{n\beta/p}.
\end{align*}
Dividing by $\varepsilon^n>0$, we conclude that
\begin{align*}
0<c\leq C'\varepsilon^{n(\beta/p-1)}
\longrightarrow0
\end{align*}
since $\beta>p$, a contradiction. Thus, the exponent $p$
in (\ref{eq normalized upper gradient stability})
cannot be increased.
\end{proof}

\begin{proof}[\textbf{Proof of Theorem \ref{thm fundamental gap}}]
Normalize $u_{1}$ so that $\norm{u_{1}}_{L^{p}(\Omega)}=1$. We first derive a gradient version of the path-intersection argument in \cite[Lemma~6.3]{chen2026fundamental}. Let us define
\begin{align*}
\mathcal{H}_{u_{1}}:=\left\{w\in\mathcal{S}_{p}:\int_{\Omega}|\nabla w|^{p-2}\langle\nabla w,\nabla u_{1}\rangle dx=0\right\}.
\end{align*}
Then, set
\begin{align*}
F(s):=\int_{\Omega}|\nabla\gamma(s)|^{p-2}\langle\nabla\gamma(s),\nabla u_{1}\rangle dx
\end{align*}
for $\gamma\in\Psi(u_{1},-u_{1})$. The map $F$ is continuous since $\gamma$ is continuous in $W_{0}^{1,p}(\Omega)$ and $\xi\mapsto|\xi|^{p-2}\xi$ is continuous from $L^{p}(\Omega)$ to $L^{p/(p-1)}(\Omega)$. Moreover,
\begin{align*}
F(0)=\lambda_{1}(\Omega,p),\qquad F(1)=-\lambda_{1}(\Omega,p).
\end{align*}
Therefore, $F(s_{\gamma})=0$ for some $s_{\gamma}\in(0,1)$, so $w_{\gamma}:=\gamma(s_{\gamma})\in\mathcal{H}_{u_{1}}$. For $w\in\mathcal{H}_{u_{1}}$, the function
\begin{align*}
\Phi(c):=\norm{\nabla(w-cu_{1})}_{L^{p}(\Omega)}^{p}
\end{align*}
is strictly convex and satisfies
\begin{align*}
\Phi'(0)=-p\int_{\Omega}|\nabla w|^{p-2}\langle\nabla w,\nabla u_{1}\rangle dx=0.
\end{align*}
Thus, $c=0$ is its unique minimizer and
\begin{align*}
\inf_{c\in\mathbb{R}}\norm{\nabla(w-cu_{1})}_{L^{p}(\Omega)}^{p}
=\Phi(0)=\norm{\nabla w}_{L^{p}(\Omega)}^{p}.
\end{align*}
Since $w_{\gamma}\in\mathcal{S}_{p}$ and
\begin{align*}
\inf_{c\in\mathbb{R}}\frac{\norm{\nabla(w_{\gamma}-cu_{1})}_{L^{p}(\Omega)}}{\norm{\nabla w_{\gamma}}_{L^{p}(\Omega)}}=1,
\end{align*}
the gradient stability estimate (\ref{eq gradient stability unified}) yields
\begin{align*}
1-\frac{\lambda_{1}(\Omega,p)}{\int_{\Omega}|\nabla w_{\gamma}|^{p}dx}\geq C_{\Omega,p}.
\end{align*}
Since $\lambda_{1}(\Omega,p)>0$, we have $C_{\Omega,p}<1$ and therefore
\begin{align*}
\int_{\Omega}|\nabla w_{\gamma}|^{p}dx\geq\frac{\lambda_{1}(\Omega,p)}{1-C_{\Omega,p}}\geq 0.
\end{align*}
Consequently,
\begin{align*}
\max_{s\in[0,1]}\int_{\Omega}|\nabla\gamma(s)|^{p}dx\geq\frac{\lambda_{1}(\Omega,p)}{1-C_{\Omega,p}}.
\end{align*}
Taking the infimum over $\gamma\in\Psi(u_{1},-u_{1})$ and using (\ref{eigenvalue second}), we obtain
\begin{align}\label{eq eigenvalue ratio bound}
\lambda_{2}(\Omega,p)\geq\frac{\lambda_{1}(\Omega,p)}{1-C_{\Omega,p}},
\end{align}
which proves (\ref{eq unified spectral gap}). Rearranging (\ref{eq eigenvalue ratio bound}) also gives the equivalent form
\begin{align*}
\Gamma(\Omega,p)\geq C_{\Omega,p}\lambda_{2}(\Omega,p).
\end{align*}
Finally, since $0<C_{\Omega,p}<1$, we have
\begin{align*}
\Gamma(\Omega,p)\geq\frac{C_{\Omega,p}}{1-C_{\Omega,p}}\lambda_{1}(\Omega,p)
\geq C_{\Omega,p}\lambda_{1}(\Omega,p).
\end{align*}
Substituting (\ref{eq stability constant lower bound}) into (\ref{eq unified spectral gap}) and using $1/(1-C_{\Omega,p})\geq1$ gives (\ref{eq explicit spectral gap}).
\end{proof}

\begin{proof}[\textbf{Proof of Corollary \ref{cor inradius fundamental gap}}]
By Theorem \ref{thm brasco inradius},
\begin{align*}
\left(\frac{\pi_{p}}{2r_{\Omega}}\right)^{p}
\leq\lambda_{1}(\Omega,p)
\leq\frac{\lambda_{1}(B_{1}(0),p)}{r_{\Omega}^{p}}.
\end{align*}
Using (\ref{eq rho upper bound}), the upper bound in (\ref{eq brasco inradius bounds}) and $2r_{\Omega}\leq\mathrm{diam}(\Omega)$, we obtain
\begin{align}
1+\rho
&<p+1+\lambda_{1}(\Omega,p)^{1/p}\frac{\mathrm{diam}(\Omega)}{\pi_{p}}\notag\\
&\leq p+1+\frac{\lambda_{1}(B_{1}(0),p)^{1/p}\mathrm{diam}(\Omega)}{\pi_{p}r_{\Omega}}\notag\\
&\leq\left(\frac{p+1}{2}+\frac{\lambda_{1}(B_{1}(0),p)^{1/p}}{\pi_{p}}\right)
\frac{\mathrm{diam}(\Omega)}{r_{\Omega}}.
\label{eq geometric rho bound}
\end{align}
Since $\max\{2,p\}>0$, taking reciprocals and raising both sides of (\ref{eq geometric rho bound}) to the power $\max\{2,p\}$ gives
\begin{align}\label{eq geometric reciprocal bound}
\frac{1}{(1+\rho)^{\max\{2,p\}}}
\geq\frac{1}{\left(\frac{p+1}{2}+\frac{\lambda_{1}(B_{1}(0),p)^{1/p}}{\pi_{p}}\right)^{\max\{2,p\}}}
\left(\frac{r_{\Omega}}{\mathrm{diam}(\Omega)}\right)^{\max\{2,p\}}.
\end{align}
The denominator in the definition (\ref{eq stability constant}) is $(1+\rho)^{p}$ when $p\geq2$ and $(1+\rho)^{2}$ when $1<p<2$. Thus, (\ref{eq stability constant}) with (\ref{eq geometric reciprocal bound}) yields
\begin{align}
C_{\Omega,p}
\geq{}\frac{\left(\frac{r_{\Omega}}{\mathrm{diam}(\Omega)}\right)^{\max\{2,p\}}}
{\left(\frac{p+1}{2}+\frac{\lambda_{1}(B_{1}(0),p)^{1/p}}{\pi_{p}}\right)^{\max\{2,p\}}}\times\begin{cases}
c_{\mathrm{deg}}(p),&p\geq2,\\[1ex]
\underline{c}_{\mathrm{sing}}(p)\left(\dfrac{p-1}{2p-1}\right)^{2-p},&1<p<2.
\end{cases}
\label{eq geometric stability constant lower bound}
\end{align}
Next, $0<C_{\Omega,p}<1$ implies $C_{\Omega,p}/(1-C_{\Omega,p})\geq C_{\Omega,p}$. Therefore, (\ref{eq unified spectral gap}) and the lower bound in (\ref{eq brasco inradius bounds}) give
\begin{align}
\Gamma(\Omega,p)
&\geq\frac{C_{\Omega,p}}{1-C_{\Omega,p}}\lambda_{1}(\Omega,p)\notag\\
&\geq C_{\Omega,p}\lambda_{1}(\Omega,p)\notag\\
&\geq C_{\Omega,p}\frac{\pi_{p}^{p}}{2^{p}r_{\Omega}^{p}}.
\label{eq geometric gap eigenvalue bound}
\end{align}
Substituting (\ref{eq geometric stability constant lower bound}) into the last inequality of (\ref{eq geometric gap eigenvalue bound}), we obtain
\begin{align}
\Gamma(\Omega,p)
\geq{}&\frac{\pi_{p}^{p}}{2^{p}\left(\frac{p+1}{2}+\frac{\lambda_{1}(B_{1}(0),p)^{1/p}}{\pi_{p}}\right)^{\max\{2,p\}}}
\frac{r_{\Omega}^{\max\{2,p\}-p}}{\mathrm{diam}(\Omega)^{\max\{2,p\}}}
\notag\\&\times\begin{cases}
c_{\mathrm{deg}}(p),&p\geq2,\\[1ex]
\underline{c}_{\mathrm{sing}}(p)\left(\dfrac{p-1}{2p-1}\right)^{2-p},&1<p<2.
\end{cases}
\label{eq geometric gap expanded}
\end{align}
Here the power of the inradius follows from
\begin{align*}
\frac{1}{r_{\Omega}^{p}}
\left(\frac{r_{\Omega}}{\mathrm{diam}(\Omega)}\right)^{\max\{2,p\}}
=\frac{r_{\Omega}^{\max\{2,p\}-p}}{\mathrm{diam}(\Omega)^{\max\{2,p\}}}.
\end{align*}
By the definition (\ref{eq geometric gap constant}) of $K_{n,p}$, (\ref{eq geometric gap expanded}) becomes
\begin{align*}
\Gamma(\Omega,p)\geq K_{n,p}
\frac{r_{\Omega}^{\max\{2,p\}-p}}{\mathrm{diam}(\Omega)^{\max\{2,p\}}},
\end{align*}
which proves (\ref{eq unified geometric gap}).
\end{proof}

{\bf Acknowledgements.} AI tools were used during the preparation of this manuscript. All mathematical arguments and proofs presented in the paper were developed and checked by the authors.

%{\bf Data availability.} No datasets were generated or analysed for the research carried
%out in this paper.
%{\bf Conflict of interest.} The authors declares that he has no conflict of interest.

% Bibliography embedded for compilation in the document editor.

\bibliographystyle{alpha}
\bibliography{references}

@article{ashbaugh1989optimal,
  title={Optimal lower bound for the gap between the first two eigenvalues of one-dimensional {S}chr{\"o}dinger operators with symmetric single-well potentials},
  author={Ashbaugh, M. S. and Benguria, R.},
  journal={Proc. Amer. Math. Soc.},
  volume={105},
  number={2},
  pages={419--424},
  year={1989},
}

@article{amato2025geometric,
  title={The geometric size of the fundamental gap},
  author={Amato, V. and Bucur, D. and Fragal{\`a}, I.},
  journal={arXiv preprint arXiv:2407.01341},
  year={2024},
}

@article{andrews2009lipschitz,
  title={{L}ipschitz bounds for solutions of quasilinear parabolic equations in one space variable},
  author={Andrews, B. and Clutterbuck, J.},
  journal={J. Differential Equations},
  volume={246},
  number={11},
  pages={4268--4283},
  year={2009},
}

@article{andrews2009time,
  title={Time-interior gradient estimates for quasilinear parabolic equations},
  author={Andrews, B. and Clutterbuck, J.},
  journal={Indiana Univ. Math. J.},
  volume={58},
  number={1},
  pages={351--380},
  year={2009},
}

@article{andrews2011proof,
  title={Proof of the fundamental gap conjecture},
  author={Andrews, B. and Clutterbuck, J.},
  journal={J. Amer. Math. Soc.},
  volume={24},
  number={3},
  pages={899--916},
  year={2011},
}

@article{andrews2021fundamental,
  title={The fundamental gap for a one-dimensional {S}chr{\"o}dinger operator with {R}obin boundary conditions},
  author={Andrews, B. and Clutterbuck, J. and Hauer, D.},
  journal={Proc. Amer. Math. Soc.},
  volume={149},
  number={4},
  pages={1481--1493},
  year={2021},
}

@article{andrews2012eigenvalue,
  title={Eigenvalue comparison on {B}akry--{\'E}mery manifolds},
  author={Andrews, B. and Ni, L.},
  journal={Comm. Partial Differential Equations},
  volume={37},
  number={11},
  pages={2081--2092},
  year={2012},
}

@article{anane1987simplicite,
  title={Simplicit{\'e} et isolation de la premi{\`e}re valeur propre du {$p$}-laplacien avec poids},
  author={Anane, A.},
  journal={C. R. Acad. Sci. Paris S{\'e}r. I Math.},
  volume={305},
  number={16},
  pages={725--728},
  year={1987},
}

@misc{ashbaugh2006sharp,
  author = {Ashbaugh, M. S.},
  title = {The {F}undamental {G}ap},
  year = {2006},
  howpublished = {Notes for a workshop at the
                  American Institute of Mathematics},
  note = {Available at
          \url{https://www.aimath.org/WWN/loweigenvalues/gap.pdf}}
}

@incollection{anane1996second,
  title={On the second eigenvalue of the {$p$}-{L}aplacian},
  author={Anane, A. and Tsouli, N.},
  booktitle={Nonlinear partial differential equations ({F}{\`e}s, 1994)},
  series={Pitman Res. Notes Math. Ser.},
  volume={343},
  pages={1--9},
  year={1996},
  publisher={Longman},
}

@article{apseit2025sharp,
  title={Sharp remainder of the ${L}^{p}$-{P}oincar\'e inequality for {B}aouendi--{G}rushin vector fields},
  author={Apseit, K. and Yessirkegenov, N. and Zhangirbayev, A.},
  journal={arXiv preprint arXiv:2507.01681},
  year={2025},
}

@article{bourni2022vanishing,
  title={The vanishing of the fundamental gap of convex domains in $\mathbb{H}^{N}$},
  author={Bourni, T. and Clutterbuck, J. and Nguyen, X. H. and Stancu, A. and Wei, G. and Wheeler, V.-M.},
  journal={Ann. Henri Poincar{\'e}},
  volume={23},
  number={2},
  pages={595--614},
  year={2022},
}

@article{bonforte2025stability,
  title={Stability in {G}agliardo--{N}irenberg--{S}obolev inequalities: {F}lows, regularity and the entropy method},
  author={Bonforte, M. and Dolbeault, J. and Nazaret, B. and Simonov, N.},
  journal={Mem. Amer. Math. Soc.},
  volume={308},
  number={1554},
  year={2025},
}

@article{brasco2015faber,
  title={{F}aber--{K}rahn inequalities in sharp quantitative form},
  author={Brasco, L. and De Philippis, G. and Velichkov, B.},
  journal={Duke Math. J.},
  volume={164},
  number={9},
  pages={1777--1831},
  year={2015},
}

@article{bianchi1991note,
  title={A note on the {S}obolev inequality},
  author={Bianchi, G. and Egnell, H.},
  journal={J. Funct. Anal.},
  volume={100},
  number={1},
  pages={18--24},
  year={1991},
}

@article{brasco2013hong,
  title={On the {H}ong--{K}rahn--{S}zeg{\Haccent{o}} inequality for the {$p$}-{L}aplace operator},
  author={Brasco, L. and Franzina, G.},
  journal={Manuscripta Math.},
  volume={141},
  number={3--4},
  pages={537--557},
  year={2013},
}

@article{bobkov2023improved,
  title={Improved {F}riedrichs inequality for a subhomogeneous embedding},
  author={Bobkov, V. and Kolonitskii, S.},
  journal={J. Math. Anal. Appl.},
  volume={527},
  number={1},
  pages={127383},
  year={2023},
}

@article{brascamp1976extensions,
  title={On extensions of the {B}runn--{M}inkowski and {P}r{\'e}kopa--{L}eindler theorems, including inequalities for log concave functions, and with an application to the diffusion equation},
  author={Brascamp, H. J. and Lieb, E. H.},
  journal={J. Funct. Anal.},
  volume={22},
  number={4},
  pages={366--389},
  year={1976},
}

@article{brezis1985sobolev,
  title={{S}obolev inequalities with remainder terms},
  author={Brezis, H. and Lieb, E. H.},
  journal={J. Funct. Anal.},
  volume={62},
  number={1},
  pages={73--86},
  year={1985},
}

@article{bobkov2026nonljusternik,
  title={Non-{L}justernik--{S}chnirelman eigenvalues of the pure {$p$}-{L}aplacian exist},
  author={Bobkov, V.},
  journal={arXiv preprint arXiv:2604.01138},
  year={2026},
}

@article{brasco2018principal,
  title={On principal frequencies and inradius in convex sets},
  author={Brasco, L.},
  journal={Bruno Pini Math. Anal. Semin.},
  volume={9},
  number={1},
  pages={78--101},
  year={2018},
}

@incollection{browder1970nonlinear,
  title={Nonlinear eigenvalue problems and group invariance},
  author={Browder, F. E.},
  booktitle={Functional Analysis and Related Fields: {P}roceedings of a Conference in honor of Professor {M}arshall {S}tone, held at the {U}niversity of {C}hicago, {M}ay 1968},
  pages={1--58},
  year={1970},
  publisher={Springer},
}

@article{brezis1997blow,
  title={Blow-up solutions of some nonlinear elliptic problems},
  author={Brezis, H. and V{\'a}zquez, J. L.},
  journal={Rev. Mat. Univ. Complut. Madrid},
  volume={10},
  number={2},
  pages={443--469},
  year={1997},
}

@article{carlen2017duality,
  title={Duality and stability for functional inequalities},
  author={Carlen, E. A.},
  journal={Ann. Fac. Sci. Toulouse Math. (6)},
  volume={26},
  number={2},
  pages={319--350},
  year={2017},
}

@article{chen2020remarks,
  title={Remarks on the one-dimensional sloshing problem involving the $p$-{L}aplacian operator},
  author={Chen, W. C. and Cheng, Y. H.},
  journal={Turkish J. Math.},
  volume={44},
  number={4},
  pages={1376--1387},
  year={2020},
}

@article{carlen2013stability,
  title={Stability for a {GNS} inequality and the log-{HLS} inequality, with application to the critical mass {K}eller--{S}egel equation},
  author={Carlen, E. A. and Figalli, A.},
  journal={Duke Math. J.},
  volume={162},
  number={3},
  pages={579--625},
  year={2013},
}

@article{cazacu2024caffarelli,
  title={{C}affarelli--{K}ohn--{N}irenberg identities, inequalities and their stabilities},
  author={Cazacu, C. and Flynn, J. and Lam, N. and Lu, G.},
  journal={J. Math. Pures Appl.},
  volume={182},
  pages={253--284},
  year={2024},
}

@article{chen2026fundamental,
  title={Fundamental {G}aps for the {D}irichlet $p$-{L}aplacian on {C}onvex {D}omains},
  author={Chen, R. and Hauer, D.},
  journal={arXiv preprint arXiv:2608.13443},
  year={2026},
}

@article{chen2026two,
  title={Two-point modulus for the logarithmic $p$-flux of the first {D}irichlet eigenfunction of the $p$-{L}aplacian},
  author={Chen, R.},
  journal={arXiv preprint arXiv:2608.29195},
  year={2026},
}

@article{clutterbuck2025constant,
  title={Constant potentials do not minimise the fundamental gap on convex domains in hyperbolic space},
  author={Clutterbuck, J. and J{\"a}ckel, F. and Nguyen, X. H.},
  journal={arXiv preprint arXiv:2512.17103},
  year={2025},
}

@article{cazacu2024hardy,
  title={{H}ardy inequalities for magnetic $p$-{L}aplacians},
  author={Cazacu, C. and Krej{\v{c}}i{\v{r}}{\'\i}k, D. and Lam, N. and Laptev, A.},
  journal={Nonlinearity},
  volume={37},
  number={3},
  pages={035004},
  year={2024},
}

@article{chen2024stability,
  title={Stability of {H}ardy--{L}ittlewood--{S}obolev inequalities with explicit lower bounds},
  author={Chen, L. and Lu, G. and Tang, H.},
  journal={Adv. Math.},
  volume={450},
  pages={109778},
  year={2024},
}

@article{chen2025optimal,
  title={Optimal asymptotic lower bound for stability of fractional {S}obolev inequality and the global stability of log-{S}obolev inequality on the sphere},
  author={Chen, L. and Lu, G. and Tang, H.},
  journal={Adv. Math.},
  volume={479},
  pages={110438},
  year={2025},
}

@article{chen2026optimal,
  title={Optimal stability of {H}ardy--{L}ittlewood--{S}obolev and {S}obolev inequalities of arbitrary orders with dimension-dependent constants},
  author={Chen, L. and Lu, G. and Tang, H.},
  journal={Math. Ann.},
  volume={394},
  year={2026},
}

@article{chen2025asymptotically,
  title={Asymptotically sharp stability of {S}obolev inequalities on the {H}eisenberg group with dimension-dependent constants},
  author={Chen, L. and Lu, G. and Tang, H. and Wang, B.},
  journal={J. Math. Pures Appl.},
  volume={206},
  pages={103832},
  year={2025},
}

@article{cheng2014dual,
  title={The dual eigenvalue problems for $p$-{L}aplacian},
  author={Cheng, Y. H. and Lian, W. C. and Wang, W. C.},
  journal={Acta Math. Hungar.},
  volume={142},
  number={1},
  pages={132--151},
  year={2014},
}

@article{cicalese2013droplet,
  title={Droplet minimizers of an isoperimetric problem with long-range interactions},
  author={Cicalese, M. and Spadaro, E.},
  journal={Comm. Pure Appl. Math.},
  volume={66},
  number={8},
  pages={1298--1333},
  year={2013},
}

@article{CT24,
  title={Remainder terms of ${L}^{p}$-{H}ardy inequalities with magnetic fields: {T}he case $1<p<2$},
  author={Chen, X. P. and Tang, C. L.},
  journal={J. Geom. Anal.},
  volume={35},
  number={12},
  pages={384},
  year={2025},
}

@article{cuesta1999beginning,
  title={The beginning of the {F}u{\v{c}}{\'\i}k spectrum for the {$p$}-{L}aplacian},
  author={Cuesta, M. and de Figueiredo, D. and Gossez, J. P.},
  journal={J. Differential Equations},
  volume={159},
  number={1},
  pages={212--238},
  year={1999},
}

@inproceedings{cuesta2000fucik,
  author = {Cuesta, M.},
  title = {On the {Fu\v{c}{\'\i}k} Spectrum of
           the Laplacian and the {$p$}-Laplacian},
  booktitle = {Proceedings of the Seminar in
               Differential Equations, Kvilda},
  editor = {Dr{\'a}bek, Pavel},
  pages = {67--96},
  publisher = {University of West Bohemia},
  address = {Pilsen},
  year = {2000}
}

@article{drelichman2008improved,
  title={Improved {P}oincar{\'e} inequalities with weights},
  author={Drelichman, I. and Dur{\'a}n, R. G.},
  journal={J. Math. Anal. Appl.},
  volume={347},
  number={1},
  pages={286--293},
  year={2008},
}

@article{dolbeault2025sharp,
  title={Sharp stability for {S}obolev and log-{S}obolev inequalities, with optimal dimensional dependence},
  author={Dolbeault, J. and Esteban, M. J. and Figalli, A. and Frank, R. L. and Loss, M.},
  journal={Camb. J. Math.},
  volume={13},
  number={2},
  pages={359--430},
  year={2025},
}

@article{do2023lp,
  title={{$L^p$}-{C}affarelli--{K}ohn--{N}irenberg inequalities and their stabilities},
  author={Do, A. X. and Flynn, J. and Lam, N. and Lu, G.},
  journal={arXiv preprint arXiv:2310.07083},
  year={2023},
}

@article{dai2025sharp,
  title={Sharp gradient stability for a class of {H}ardy--{S}obolev--{M}az'ya inequalities},
  author={Dai, W. and Fu, J. and Zhang, A.},
  journal={arXiv preprint arXiv:2509.00814},
  year={2025},
}

@article{do2024scale,
  title={Scale-dependent {P}oincar{\'e} inequalities, log-{S}obolev inequality and the stability of the {H}eisenberg uncertainty principle on the hyperbolic space},
  author={Do, A. X. and Ganguly, D. and Lam, N. and Lu, G.},
  journal={arXiv preprint arXiv:2410.21039},
  year={2024},
}

@article{do2026sharp,
  title={Sharp stability of the {H}eisenberg uncertainty principle: {S}econd-order and curl-free field cases},
  author={Do, A. X. and Lam, N. and Lu, G.},
  journal={J. Funct. Anal.},
  volume={290},
  number={7},
  pages={111321},
  year={2026},
}

@article{drabek1999resonance,
  title={Resonance problems for the $p$-{L}aplacian},
  author={Dr{\'a}bek, P. and Robinson, S. B.},
  journal={J. Funct. Anal.},
  volume={169},
  number={1},
  pages={189--200},
  year={1999},
}

@article{drabek2007plaplacian,
  title={The {$p$}-{L}aplacian, mascot of nonlinear analysis},
  author={Dr{\'a}bek, P.},
  journal={Acta Math. Univ. Comenian. (N.S.)},
  volume={76},
  number={1},
  pages={85--98},
  year={2007},
}

@article{dai2021fundamental,
  title={Fundamental gap estimate for convex domains on sphere---the case $n=2$},
  author={Dai, X. and Seto, S. and Wei, G.},
  journal={Comm. Anal. Geom.},
  volume={29},
  number={5},
  pages={1095--1125},
  year={2021},
}

@article{drabek2007poincare,
  title={{P}oincar{\'e} inequality and {P}alais--{S}male condition for the {$p$}-{L}aplacian},
  author={Dr{\'a}bek, P. and Tak{\'a}{\v{c}}, P.},
  journal={Calc. Var. Partial Differential Equations},
  volume={29},
  number={1},
  pages={31--58},
  year={2007},
}

@article{dolbeault2013improved,
  title={Improved interpolation inequalities, relative entropy and fast diffusion equations},
  author={Dolbeault, J. and Toscani, G.},
  journal={Ann. Inst. H. Poincar{\'e} Anal. Non Lin{\'e}aire},
  volume={30},
  number={5},
  pages={917--934},
  year={2013},
}

@article{deng2024gradient,
  title={Gradient stability of {C}affarelli--{K}ohn--{N}irenberg inequality involving weighted $p$-{L}aplace},
  author={Deng, S. and Tian, X.},
  journal={arXiv preprint arXiv:2401.04129},
  year={2024},
}

@article{deng2025caffarelli,
  title={{C}affarelli--{K}ohn--{N}irenberg-type inequalities related to weighted $p$-{L}aplace equations},
  author={Deng, S. and Tian, X.},
  journal={J. Funct. Anal.},
  volume={288},
  number={9},
  pages={110867},
  year={2025},
}

@article{deng2025stability,
  title={On the stability of the {H}ardy--{S}obolev inequality involving $p$-{L}aplace},
  author={Deng, S. and Tian, X.},
  journal={Nonlinearity},
  volume={38},
  number={8},
  pages={085016},
  year={2025},
}

@book{evans2022partial,
  title={Partial differential equations},
  author={Evans, L. C.},
  series={Grad. Stud. Math.},
  volume={19},
  edition={2},
  year={2010},
  publisher={Amer. Math. Soc.},
  address={Providence, RI},
}

@article{fan2026sharp,
  title={Sharp quantitative stability for the affine $p$-{S}obolev inequality, {P}art {I}: {T}he case $2\leq p<n$},
  author={Fan, S. and Li, G.-D. and Zhang, J.},
  journal={arXiv preprint arXiv:2606.09555},
  year={2026},
}

@article{fan2026sharp2,
  title={Sharp quantitative stability for the affine $p$-{S}obolev inequality, {P}art {II}: {T}he case $1<p<2$},
  author={Fan, S. and Li, G.-D. and Zhang, J.},
  journal={arXiv preprint arXiv:2609.28215},
  year={2026},
}

@article{figalli2011shape,
  title={On the shape of liquid drops and crystals in the small mass regime},
  author={Figalli, A. and Maggi, F.},
  journal={Arch. Ration. Mech. Anal.},
  volume={201},
  number={1},
  pages={143--207},
  year={2011},
}

@article{figalli2013isoperimetric,
  title={On the isoperimetric problem for radial log-convex densities},
  author={Figalli, A. and Maggi, F.},
  journal={Calc. Var. Partial Differential Equations},
  volume={48},
  number={3},
  pages={447--489},
  year={2013},
}

@article{fusco2008sharp,
  title={The sharp quantitative isoperimetric inequality},
  author={Fusco, N. and Maggi, F. and Pratelli, A.},
  journal={Ann. of Math.},
  volume={168},
  number={3},
  pages={941--980},
  year={2008},
}

@book{fucik2006spectral,
  title={Spectral analysis of nonlinear operators},
  author={Fu{\v{c}}{\'\i}k, S. and Ne{\v{c}}as, J. and Sou{\v{c}}ek, J. and Sou{\v{c}}ek, V.},
  series={Lecture Notes in Math.},
  volume={346},
  year={1973},
  publisher={Springer-Verlag},
  address={Berlin--New York},
}

@article{ferone2012remark,
  title={A remark on optimal weighted {P}oincar{\'e} inequalities for convex domains},
  author={Ferone, V. and Nitsch, C. and Trombetti, C.},
  journal={Atti Accad. Naz. Lincei Rend. Lincei Mat. Appl.},
  volume={23},
  number={4},
  pages={467--475},
  year={2012},
}

@article{frank2024sharp,
  title={The sharp $\sigma_{2}$-curvature inequality on the sphere in quantitative form},
  author={Frank, R. L. and Peteranderl, J. W.},
  journal={arXiv preprint arXiv:2412.12819},
  year={2024},
}

@article{frank2025sharp,
  author  = {Frank, R. L. and Peteranderl, J. and Read, L.},
  title   = {Sharp Quantitative Integral Inequalities for Harmonic Extensions},
  journal = {Ann. Sc. Norm. Super. Pisa Cl. Sci. (5)},
  year    = {2026},
}

@article{fleckinger2002improved,
  title={An improved {P}oincar{\'e} inequality and the $p$-{L}aplacian at resonance for $p > 2$},
  author={Fleckinger-Pell{\'e}, J. and Tak{\'a}{\v{c}}, P.},
  journal={Adv. Differential Equations},
  volume={7},
  number={8},
  pages={951--971},
  year={2002},
}

@article{figalli2023sharp,
  title={Sharp quantitative stability of the {B}runn--{M}inkowski inequality},
  author={Figalli, A. and van Hintum, P. and Tiba, M.},
  journal={arXiv preprint arXiv:2310.20643},
  year={2023},
}

@article{figalli2022sharp,
  title={Sharp gradient stability for the {S}obolev inequality},
  author={Figalli, A. and Zhang, Y. R. Y.},
  journal={Duke Math. J.},
  volume={171},
  number={12},
  pages={2407--2459},
  year={2022},
}

@article{figalli2022strong,
  title={Strong stability for the {W}ulff inequality with a crystalline norm},
  author={Figalli, A. and Zhang, Y. R. Y.},
  journal={Comm. Pure Appl. Math.},
  volume={75},
  number={2},
  pages={422--446},
  year={2022},
}

@article{gelfand1963some,
  title={Some problems in the theory of quasi-linear equations},
  author={Gelfand, I. M.},
  journal={Amer. Math. Soc. Transl. (2)},
  volume={29},
  pages={295--381},
  year={1963},
}

@book{GilbargTrudinger1977,
  title={Elliptic Partial Differential Equations of Second Order},
  author={Gilbarg, D. and Trudinger, N. S.},
  year={1977},
  publisher={Springer-Verlag},
  address={New York--Berlin--Heidelberg},
}

@article{huang2025lp,
  title={On {$L^p$}-{H}ardy inequalities for magnetic $p$-{L}aplacians},
  author={Huang, Y. C. and Tong, X.},
  journal={arXiv preprint arXiv:2508.09483},
  year={2025},
}

@article{he2020fundamental,
  title={Fundamental gap of convex domains in the spheres},
  author={He, C. and Wei, G. and Zhang, Q. S.},
  journal={Amer. J. Math.},
  volume={142},
  number={4},
  pages={1161--1191},
  year={2020},
}

@article{lavine1994eigenvalue,
  title={The eigenvalue gap for one-dimensional convex potentials},
  author={Lavine, R.},
  journal={Proc. Amer. Math. Soc.},
  volume={121},
  number={3},
  pages={815--821},
  year={1994},
}

@article{lieberman1988boundary,
  title={Boundary regularity for solutions of degenerate elliptic equations},
  author={Lieberman, G. M.},
  journal={Nonlinear Anal.},
  volume={12},
  number={11},
  pages={1203--1219},
  year={1988},
}

@article{lindqvist1990equation,
  title={On the equation $\mathrm{div} \left( |\nabla u|^{p-2} \nabla u \right) + \lambda |u|^{p-2} u = 0$},
  author={Lindqvist, P.},
  journal={Proc. Amer. Math. Soc.},
  volume={109},
  number={1},
  pages={157--164},
  year={1990},
}

@article{ignat2025optimal,
  title={Optimal convergence rates for the finite element approximation of the {S}obolev constant},
  author={Ignat, L. I. and Zuazua, E.},
  journal={Found. Comput. Math. (to appear)},
  year={2026},
}

@article{ni2013estimates,
  title={Estimates on the modulus of expansion for vector fields solving nonlinear equations},
  author={Ni, L.},
  journal={J. Math. Pures Appl.},
  volume={99},
  number={1},
  pages={1--16},
  year={2013},
}

@article{oden1999spectral,
  title={Spectral gap estimates on compact manifolds},
  author={Oden, K. and Sung, C. J. and Wang, J.},
  journal={Trans. Amer. Math. Soc.},
  volume={351},
  number={9},
  pages={3533--3548},
  year={1999},
}

@article{peteranderl2026sharp,
  title={The sharp $\sigma_k$-curvature inequality on locally conformally flat manifolds in quantitative form},
  author={Peteranderl, J. W.},
  journal={arXiv preprint arXiv:2609.13133},
  year={2026},
}

@article{sakaguchi1987concavity,
  title={Concavity properties of solutions to some degenerate quasilinear elliptic {D}irichlet problems},
  author={Sakaguchi, S.},
  journal={Ann. Scuola Norm. Sup. Pisa Cl. Sci. (4)},
  volume={14},
  number={3},
  pages={403--421},
  year={1987},
}

@book{struwe2008variational,
  title={Variational methods. Applications to nonlinear partial differential equations and {H}amiltonian systems},
  author={Struwe, M.},
  series={Ergebnisse der Mathematik und ihrer Grenzgebiete. 3. Folge [Results in Mathematics and Related Areas. 3rd Series], A Series of Modern Surveys in Mathematics},
  volume={34},
  edition={4},
  year={2008},
  publisher={Springer-Verlag},
  address={Berlin},
}

@article{seto2019sharp,
  title={Sharp fundamental gap estimate on convex domains of sphere},
  author={Seto, S. and Wang, L. and Wei, G.},
  journal={J. Differential Geom.},
  volume={112},
  number={2},
  pages={347--389},
  year={2019},
}

@article{singer1985estimate,
  title={An estimate of the gap of the first two eigenvalues in the {S}chr{\"o}dinger operator},
  author={Singer, I. M. and Wong, B. and Yau, S. T. and Yau, S. S. T.},
  journal={Ann. Scuola Norm. Sup. Pisa Cl. Sci. (4)},
  volume={12},
  number={2},
  pages={319--333},
  year={1985},
}

@article{shaimerdenov2025sharp,
  title={Sharp remainder terms and stability of weighted {H}ardy--{P}oincar{\'e} and {H}eisenberg--{P}auli--{W}eyl inequalities related to the {B}aouendi--{G}rushin operator},
  author={Shaimerdenov, Y. and Yessirkegenov, N. and Zhangirbayev, A.},
  journal={arXiv preprint arXiv:2508.16380},
  year={2025},
}

@article{szulkin1988ljusternik,
  title={{L}justernik--{S}chnirelmann theory on ${C}^1$-manifolds},
  author={Szulkin, A.},
  journal={Ann. Inst. H. Poincar{\'e} Anal. Non Lin{\'e}aire},
  volume={5},
  number={2},
  pages={119--139},
  year={1988},
}

@article{vazquez1984strong,
  title={A strong maximum principle for some quasilinear elliptic equations},
  author={V{\'a}zquez, J. L.},
  journal={Appl. Math. Optim.},
  volume={12},
  pages={191--202},
  year={1984},
}

@article{vandenberg1983condensation,
  title={On condensation in the free-boson gas and the spectrum of the {L}aplacian},
  author={van den Berg, M.},
  journal={J. Stat. Phys.},
  volume={31},
  number={3},
  pages={623--637},
  year={1983},
}

@article{wang2026stability,
  title={Stability of spinorial {S}obolev inequalities on $\mathbb{S}^{n}$},
  author={Wang, G. and Zhang, M.},
  journal={Adv. Math.},
  volume={505},
  pages={111318},
  year={2026},
}

@article{wen2026maximal,
  title={Maximal gaps of eigenvalues of one-dimensional $p$-{L}aplacian},
  author={Wen, Z. and Zhou, L.},
  journal={Mediterr. J. Math.},
  volume={23},
  number={1},
  pages={47},
  year={2026},
}

@article{yau1987nonlinear,
  title={Nonlinear analysis in geometry},
  author={Yau, S. T.},
  journal={Enseign. Math. (2)},
  volume={33},
  number={1--2},
  pages={109--158},
  year={1987},
}

@article{yu1986lower,
  title={Lower bounds of the gap between the first and second eigenvalues of the {S}chr{\"o}dinger operator},
  author={Yu, Q. H. and Zhong, J. Q.},
  journal={Trans. Amer. Math. Soc.},
  volume={294},
  number={1},
  pages={341--349},
  year={1986},
}

@article{yessirkegenov2026stability,
  title={Stability of the {$L^p$}-{P}oincar{\'e} inequality for the {L}ebesgue measure and {G}aussian probability measure with explicit geometric dependence and applications to spectral gaps},
  author={Yessirkegenov, N. and Zhangirbayev, A.},
  journal={arXiv preprint arXiv:2602.05968},
  year={2026},
}

@article{zhou2026stability,
  title={The stability on the {C}affarelli--{K}ohn--{N}irenberg and {H}ardy-type inequalities and beyond},
  author={Zhou, Y. and Zou, W.},
  journal={J. Differential Equations},
  volume={450},
  pages={113738},
  year={2026},
}

@article{BrascoPrinariZagati2022,
  author = {Brasco, L. and Prinari, F. and Zagati, A. C.},
  title = {A comparison principle for the {L}ane-{E}mden equation and applications to geometric estimates},
  journal = {Nonlinear Anal.},
  volume = {220},
  year = {2022},
  pages = {112847},
  doi = {10.1016/j.na.2022.112847},
}

@article{bonforte2023constructive,
  title = {Constructive stability results in interpolation inequalities and explicit improvements of decay rates of fast diffusion equations},
  author = {Bonforte, M. and Dolbeault, J. and Nazaret, B. and Simonov, N.},
  journal = {Discrete Contin. Dyn. Syst.},
  volume = {43},
  number = {3\&4},
  pages = {1070--1089},
  year = {2023},
}

@article{dolbeault2021functional,
  title = {Functional inequalities: {N}onlinear flows and entropy methods as a tool for obtaining sharp and constructive results},
  author = {Dolbeault, J.},
  journal = {Milan J. Math.},
  volume = {89},
  pages = {355--386},
  year = {2021},
  publisher = {Springer},
}

@article{brigati2024stability,
  title = {Stability for the logarithmic {S}obolev inequality},
  author = {Brigati, G. and Dolbeault, J. and Simonov, N.},
  journal = {J. Funct. Anal.},
  volume = {287},
  number = {8},
  pages = {110562},
  year = {2024},
  publisher = {Elsevier},
}

@article{brigati2024logarithmic,
  title = {Logarithmic {S}obolev and interpolation inequalities on the sphere: {C}onstructive stability results},
  author = {Brigati, G. and Dolbeault, J. and Simonov, N.},
  journal = {Ann. Inst. H. Poincar{\'e} Anal. Non Lin{\'e}aire},
  volume = {41},
  pages = {1289--1321},
  year = {2024},
  publisher = {EMS Press},
}

@article{Koenig2025,
  author = {K{\"o}nig, T.},
  title = {Stability for the {S}obolev inequality: {E}xistence of a minimizer},
  journal = {J. Eur. Math. Soc. (JEMS)},
  year = {2025},
}

@article{Koenig2023,
  author = {K{\"o}nig, T.},
  title = {On the sharp constant in the {B}ianchi--{E}gnell stability inequality},
  journal = {Bull. Lond. Math. Soc.},
  volume = {55},
  year = {2023},
  pages = {2070--2075},
}

\end{document}